\documentclass[reqno]{amsart}
\usepackage[utf8]{inputenc}
\usepackage{amsthm,thmtools,amssymb}
\usepackage{stmaryrd}
\usepackage{mathrsfs}
\usepackage[a4paper, margin=2cm]{geometry}
\usepackage{accents}
\usepackage{comment}
\usepackage{multirow}
\usepackage[colorlinks=true,pagebackref]{hyperref}
\usepackage[bb=stix]{mathalpha}
\usepackage{cleveref}
\hypersetup{%
    colorlinks=true,
    linkcolor=violet,
    citecolor=violet
}
\usepackage{tikz}
\usetikzlibrary{arrows,arrows.meta,decorations.pathreplacing,decorations.markings,shapes,calc}
\tikzset{labelsize/.style={font=\scriptsize}}
\tikzset{string/.style={very thick}}
\tikzset{
  pto/.style={->,postaction={decorate},
    decoration={
        markings,
        mark=at position 0.5 with {\arrow{|}}}
  },
}
\tikzset{2cell/.style={-implies,double,double equal sign distance,shorten >=9pt, shorten <=10pt}}
\usepackage{tikz-cd}

\mathchardef\mhyphen="2D

\newtheorem{theorem}{Theorem}[section]

\newtheorem{proposition}[theorem]{Proposition}

\theoremstyle{definition}
\newtheorem{definition}[theorem]{Definition}
\newtheorem{example}[theorem]{Example}

\theoremstyle{remark}
\newtheorem{remark}[theorem]{Remark}
\numberwithin{equation}{section}

\crefname{theorem}{Theorem}{Theorems}
\crefname{section}{Section}{Sections}
\crefname{subsection}{Subsection}{Subsections}
\crefname{definition}{Definition}{Definitions}
\crefname{notation}{Notation}{Notations}
\crefname{example}{Example}{Examples}
\crefname{remark}{Remark}{Remarks}
\crefname{equation}{}{}
\crefname{corollary}{Corollary}{Corollaries}
\crefname{proposition}{Proposition}{Propositions}
\crefname{lemma}{Lemma}{Lemmas}
\crefname{figure}{Figure}{Figures}

\makeatletter
\newcommand{\pto}{}% just for safety
\newcommand{\pgets}{}% just for safety

\DeclareRobustCommand{\pto}{\mathrel{\mathpalette\p@to@gets\to}}
\DeclareRobustCommand{\pgets}{\mathrel{\mathpalette\p@to@gets\gets}}

\newcommand{\p@to@gets}[2]{%
  \ooalign{\hidewidth$\m@th#1\mapstochar\mkern5mu$\hidewidth\cr$\m@th#1\to$\cr}%
}
\makeatother

\newcommand{\cat}[1]{\mathcal{#1}} % categories
\newcommand{\bcat}[1]{\mathscr{#1}} % bicategories
\newcommand{\tcat}[1]{\mathscr{#1}} % 2-categories
\newcommand{\ecat}[1]{\mathsf{#1}} % enriched categories
\newcommand{\vdcat}[1]{\mathbb{#1}} % virtual double categories
\newcommand{\mnd}[1]{\mathsf{#1}} % monads

\newcommand{\enCat}[1]{{#1}\mhyphen\mathbf{Cat}}

\newcommand{\id}{\mathrm{id}}

\newcommand{\vc}{\mathrm{vert}} % the vertical category of a virtual double category
\newcommand{\hc}{\mathrm{hc}} % horizontally chaotic
\newcommand{\he}{\mathrm{he}} % horizontally empty

\newcommand{\CATpb}{\mathbf{CAT}_\mathrm{pb}}

\newcommand{\Set}{\mathbf{Set}}
\newcommand{\Setlc}{{\mathbf{Set}_\mathrm{lc}}}
\newcommand{\Cat}{\mathbf{Cat}}
\newcommand{\CAT}{\mathbf{CAT}}

\newcommand{\Enr}{\mathrm{Enr}}
\newcommand{\EEnr}{\mathbb{Enr}}
\newcommand{\twoCAT}{{2\mhyphen\mathbf{CAT}}}

\newcommand{\Gph}{\mathbf{Gph}}

\newcommand{\PsDbl}{{\mathbf{PsDBL}_\ell}}
\newcommand{\StrDbl}{{\mathbf{StrDBL}_\ell}}
\newcommand{\VDbl}{\mathbf{VDBL}}
\newcommand{\VDbln}{\mathbf{UVDBL}}
\newcommand{\MMod}{{\mathbb{Mod}}}
\newcommand{\FF}{{\mathbb{F}}}
\newcommand{\hu}{\mathrm{I}} % horizontal unit of a virtual double category
 % horizontal unit monad
\newcommand{\vmm}[1]{\underline{#1}} % vertical monad morphism
\newcommand{\hbm}[1]{\underline{#1}} % horizontal bimodule of monads

\newcommand{\Inc}{\mathrm{Inc}}
\newcommand{\Und}{\mathrm{Und}}

\newcommand{\MonCAT}{{\mathbf{MonCAT}_\ell}}
\newcommand{\BICAT}{{\mathbf{BICAT}_\ell}}

\newcommand{\vect}[1]{\boldsymbol{#1}}

\newcommand{\FFam}{{\mathbb{Fam}}}
\newcommand{\Fam}{\mathrm{Fam}}

\newcommand{\ob}{\mathrm{ob}}
\DeclareMathOperator{\EElts}{{\mathbb{Elt}}}
\DeclareMathOperator{\Elts}{\mathrm{Elt}}

\newcommand{\SSpan}{{\mathbb{Span}}}

\newcommand{\MMat}{{\mathbb{Mat}}}
\newcommand{\enMMat}[1]{{#1}\mhyphen{\mathbb{Mat}}}
\newcommand{\PProf}[1]{{#1}\mhyphen{\mathbb{Prof}}}
\newcommand{\intPProf}{{\mathbb{Prof}}}

\newcommand{\V}{\mathscr{V}}

\newcommand{\UU}{\mathbb{U}}
\newcommand{\VV}{\mathbb{V}}
\newcommand{\LL}{\mathbb{L}}

\DeclareMathOperator{\Spl}{Spl}

\title{The familial nature of enrichment over virtual double categories}

\author{Soichiro Fujii}
\address{Department of Mathematics and Statistics, Faculty of Science, Masaryk University, Kotl\'a\v{r}sk\'a 2, 611 37 Brno, Czech Republic}
\email{s.fujii.math@gmail.com}

\author{Stephen Lack}
\address{School of Mathematical and Physical Sciences, Macquarie University NSW 2109, 
Australia}
\email{steve.lack@mq.edu.au}

\subjclass[2020]{Primary 18D20, 18D60, 18N10, Secondary 18B10, 18D65, 18D70, 18M65}

% 18D20  	Enriched categories (over closed or monoidal categories)
% 18D60  	Profunctors (= correspondences, distributors, modules)
% 18N10  	2-categories, bicategories, double categories

% 18B10  	Categories of spans/cospans, relations, or partial maps
% 18D65  	Proarrow equipments, Yoneda structures, KZ doctrines (lax idempotent monads)
% 18D70  	Formal category theory
% 18M65  	Non-symmetric operads, multicategories, generalized multicategories

\keywords{Enriched category, virtual double category, 2-category, parametric right adjoint, polynomial functor}

\date{July 8, 2025}

\begin{document}

\begin{abstract}
Originally enriched categories were defined over a monoidal category, but it was gradually realized that important examples can only be included when one enriches over more general structures such as bicategories and virtual double categories. We show that, as well as allowing more examples, working over virtual double categories also gives better formal properties. We study the 2-functor sending a virtual double category to the 2-category of categories enriched over it. We show that this is a parametric right 2-adjoint, and in fact is familial. We also show how a ``families construction'' for virtual double categories can be used to give a formal construction of the 2-category of categories enriched over a virtual double category.
\end{abstract}

\maketitle
\setcounter{tocdepth}{1}
\tableofcontents

\section{Introduction}\label{sec:intro}
Enriched category theory is usually developed over monoidal categories: for each monoidal category $\cat{V}$, we have (small) $\cat{V}$-categories, $\cat{V}$-functors, and $\cat{V}$-natural transformations, which comprise the 2-category $\enCat{\cat{V}}$. We have a 2-functor 
\begin{equation}
\label{eqn:moncat-enrichment}
\Enr\colon \MonCAT\to \twoCAT
\end{equation}
mapping each monoidal category $\cat{V}$ to the 2-category $\Enr(\cat{V})=\enCat{\cat{V}}$. Here, $\MonCAT$ is the 2-category of (large) monoidal categories, lax monoidal functors, and monoidal natural transformations, whereas $\twoCAT$ is the 2-category of (large) 2-categories, 2-functors, and 2-natural transformations.

One of the main goals of this paper is to show that the 2-functor \cref{eqn:moncat-enrichment} becomes a \emph{parametric right $2$-adjoint}, provided that we extend its domain appropriately.
Recall \cite{Street-petit-topos, Weber-familial} that a $2$-functor $R\colon \tcat{K}\to\tcat{L}$ from a $2$-category $\tcat{K}$ with a terminal object $1$ is called a \emph{parametric right $2$-adjoint}, if the $2$-functor $R_1\colon \tcat{K}\to\tcat{L}/R1$, sending each $K\in \tcat{K}$ to $(R!_K\colon RK\to R1)\in \tcat{L}/R1$, is a right $2$-adjoint.

There are various reasons to generalize enrichment bases from monoidal categories to bicategories: this allows important new examples to be captured as well as having better formal properties (see e.g.\ \cite{Walters-sheaves,Street-cohomology,Fujii_Lack}), and we can extend the 2-functor \eqref{eqn:moncat-enrichment} to
\begin{equation}
\label{eqn:bicat-enrichment}
\Enr\colon\BICAT\to \twoCAT,
\end{equation}
where $\BICAT$ is the 2-category of (large) bicategories, lax functors, and icons \cite{Lack_icons}. $\MonCAT$ can be regarded as a full sub-2-category of $\BICAT$, by identifying monoidal categories with one-object bicategories. 
Now, $\BICAT$ has the terminal object $1$, and hence we can factorize \eqref{eqn:bicat-enrichment} as 
\[
\begin{tikzpicture}[baseline=-\the\dimexpr\fontdimen22\textfont2\relax ]
      \node(0) at (0,-0.75) {$\BICAT$};
      \node(1) at (3,-0.75) {$\twoCAT$,};
      \node(2) at (1.5,0.75) {$\twoCAT/\Enr(1)$};
      \draw [->] (0) to node[auto,swap, labelsize] {$\Enr$}  (1);  
      \draw [->] (2) to node[auto, labelsize] {forgetful}  (1); 
      \draw [->] (0) to node[auto, labelsize] {$\Enr_{1}$} (2); 
\end{tikzpicture}
\]
where the $2$-functor $\Enr_1$ maps each bicategory $\tcat{B}$ to $\bigl(\Enr(!_\tcat{B})\colon \Enr(\tcat{B})\to \Enr(1)\bigr)\in\twoCAT/\Enr(1)$. Here $\Enr(1)$ is the locally chaotic 2-category of sets and functions: see \cref{subsec:enrichment-over-vdbl} below.
In \cite[Theorem~2.1]{Fujii_Lack}, we showed that the 2-functor $\Enr_1\colon \BICAT\to \twoCAT/\Enr(1)$ preserves all limits which happen to exist in $\BICAT$ (which is far from being complete).\footnote{Incidentally, this is not the case for the 2-functor $\Enr_1\colon \MonCAT\to \twoCAT/\Enr(1)$: there exists an equalizer in $\MonCAT$ which is not preserved by $\Enr_1\colon \MonCAT\to \twoCAT/\Enr(1)$ (and hence neither by the inclusion $\MonCAT\to \BICAT$); see \cref{rmk:MonCAT-not-pra}.}
This raises the following natural question: does $\Enr_1\colon \BICAT\to \twoCAT/\Enr(1)$ have a left 2-adjoint? 
In other words, is $\Enr\colon \BICAT\to \twoCAT$ a parametric right 2-adjoint?

The answer to this question turns out to be \emph{negative} (see \cref{rmk:BICAT-not-pra}). However, by further extending the bases of enrichment from bicategories to \emph{virtual double categories} (or to an intermediate class of \emph{pseudo double categories}), the answer becomes positive.
Enrichment over virtual double categories was defined in  \cite{Leinster-generalized-enrichment}; we review the main definitions in \cref{subsec:enrichment-over-vdbl}. 
We can extend \eqref{eqn:bicat-enrichment} to a 2-functor
\begin{equation}
    \label{eqn:vdbl-enrichment}
    \Enr\colon \VDbl\to\twoCAT,
\end{equation}
where $\VDbl$ is the 2-category of (large) virtual double categories, virtual double functors, and vertical natural transformations, whose definitions we recall in \cref{subsec:vdbl}.
The 2-functor $\Enr_1\colon \VDbl\to \twoCAT/\Enr(1)$, obtained by factorizing $\Enr\colon\VDbl\to \twoCAT$, does have a left 2-adjoint $\LL\colon \twoCAT/\Enr(1)\to \VDbl$:
\[
\begin{tikzpicture}[baseline=-\the\dimexpr\fontdimen22\textfont2\relax ]
      \node(0) at (0,-0.75) {$\VDbl$};
      \node(1) at (3,-0.75) {$\twoCAT$.};
      \node(2) at (1.5,0.75) {$\twoCAT/\Enr(1)$};
      \draw [->] (0) to node[auto,swap, labelsize] {$\Enr$}  (1);  
      \draw [->] (2) to node[auto, labelsize] {forgetful}  (1); 
      \draw [->, transform canvas={xshift=6}] (0) to node[auto, swap,labelsize] {$\Enr_{1}$} (2); 
      \draw [<-, transform canvas={xshift=-6}] (0) to node[auto,  labelsize] {$\LL$} (2); 
      \path(0) to node[rotate=-45] {$\dashv$} (2);
\end{tikzpicture}
\]

In fact, it is not difficult to give an explicit description of $\LL$ (see \cref{sec:L-explicitly}).
However, the 2-adjunction $\LL\dashv \Enr_1$ turns out to be related to other fundamental constructions involving virtual double categories, and that will be the main theme of this paper. 

As we recall in \cref{sec:poly-pra-fam}, the class of parametric right $2$-adjoints is closed under composition and contains both the right $2$-adjoints and the polynomial $2$-functors.
Therefore, in order to show that $\Enr\colon \VDbl\to \twoCAT$ is a parametric right $2$-adjoint, it suffices to express $\Enr$ 
as a composite of right $2$-adjoints and polynomial $2$-functors. 
To this end, we note that $\Enr$ is the following composite: 
\begin{equation}\label{eqn:Enr-decomposition}
\begin{tikzpicture}[baseline=-\the\dimexpr\fontdimen22\textfont2\relax ]
      \node(0) at (0,0)  {$\VDbl$};
      \node(1) at (3,0)  {$\VDbl$};
      \node(2) at (6.5,0)  {$\VDbln$};
      \node(3) at (10,0) {$\twoCAT.$};
      \draw [->] (0) to node[auto, labelsize] {$\MMat$}  (1);
      \draw [->] (1) to node[auto, labelsize] {$\MMod$}  (2);
      \draw [->] (2) to node[auto, labelsize] {$\V$}  (3);
\end{tikzpicture}
\end{equation}
Basically, this is saying that, for any virtual double category $\vdcat{A}$, an $\vdcat{A}$-category is a horizontal monad in the virtual double category $\enMMat{\vdcat{A}}$ of \emph{matrices in $\vdcat{A}$}.
When $\vdcat{A}$ is a monoidal category or a bicategory, this is a well-known observation; see e.g.\ \cite{variation-through-enrichment}.
Here, $\VDbln$ is a $2$-category of \emph{unital virtual double categories} (i.e., virtual double categories with \emph{chosen} horizontal units), while $\MMod$ and $\V$ are variants of 2-functors studied in \cite{Cruttwell-Shulman-unified}. Both $\MMod$ and $\V$ turn out to be right $2$-adjoints.

On the other hand, $\MMat\colon \VDbl\to \VDbl$ is a polynomial $2$-functor, induced by a certain \emph{polynomial} 
\begin{equation}
\label{eqn:polynomial-for-Mat}
\begin{tikzpicture}[baseline=-\the\dimexpr\fontdimen22\textfont2\relax ]
      \node(0) at (0,0) {$1$};
      \node(1)at (2,0) {$(\Set_\ast)_\hc$};
      \node(2) at (4,0) {$\Set_\hc$};
      \node(3) at (6,0) {$1$};
      \draw [->] (1) to node[auto,swap, labelsize] {$!_{(\Set_\ast)_\hc}$}  (0);
      \draw [->] (1) to node[auto,labelsize] {$P_\hc$}  (2);
      \draw [->] (2) to node[auto, labelsize] {$!_{\Set_\hc}$}  (3);
\end{tikzpicture}
\end{equation}
in $\VDbl$; in other words, $\MMat$ is the composite of 
\begin{equation*}
\begin{tikzpicture}[baseline=-\the\dimexpr\fontdimen22\textfont2\relax ]
      \node(0) at (-1,0)  {$\VDbl$};
      \node(1) at (3,0)  {$\VDbl/(\Set_\ast)_\hc$};
      \node(2) at (6.5,0)  {$\VDbl/\Set_\hc$};
      \node(3) at (10,0) {$\VDbl$.};
      \draw [->] (0) to node[auto, labelsize] {$(\Set_\ast)_\hc\times (-)$}  (1);
      \draw [->] (1) to node[auto, labelsize] {$\prod_{P_\hc}$}  (2);
      \draw [->] (2) to node[auto, labelsize] {forgetful}  (3);
\end{tikzpicture}
\end{equation*}
The $2$-functor $\prod_{P_\hc}$ (which is defined as the right $2$-adjoint of the pullback $P_\hc^\ast$ along $P_\hc$) exists because $P_\hc$ is \emph{powerful} (or \emph{exponentiable}).
This in turn follows from the fact that it is 
a \emph{discrete opfibration} between virtual double categories, as we show in \cref{prop:disc-opfib-powerful}.

Incidentally, the composite of the first two factors of \eqref{eqn:Enr-decomposition} is the 
$2$-functor $\EEnr\colon \VDbl\to \VDbln$, sending each $\vdcat{A}\in\VDbl$ to the (unital) virtual double category $\EEnr(\vdcat{A})=\PProf{\vdcat{A}}$ whose horizontal morphisms are \emph{$\vdcat{A}$-profunctors}. 
Such virtual double categories play an important role in formal category theory; see e.g.\ \cite{Shulman-enriched-indexed,Koudenburg-formal,Kawase-profunctor}.
We thus see that this $\EEnr$ is also a parametric right $2$-adjoint.

The polynomial \eqref{eqn:polynomial-for-Mat} in $\VDbl$ can be obtained from the polynomial 
\begin{equation}\label{eqn:polynomial-for-Fam-in-CAT}
\begin{tikzpicture}[baseline=-\the\dimexpr\fontdimen22\textfont2\relax ]
      \node(0) at (0,0) {$1$};
      \node(1)at (2,0) {$\Set_\ast$};
      \node(2) at (4,0) {$\Set$};
      \node(3) at (6,0) {$1$};
      \draw [->] (1) to node[auto,swap, labelsize] {$!_{\Set_\ast}$}  (0);
      \draw [->] (1) to node[auto,labelsize] {$P$}  (2);
      \draw [->] (2) to node[auto, labelsize] {$!_{\Set}$}  (3);
\end{tikzpicture}
\end{equation}
in $\CAT$ by applying the $2$-functor $(-)_\hc\colon\CAT\to\VDbl$ which maps each category $\cat{C}$ to the \emph{horizontally chaotic} virtual double category $\cat{C}_\hc$ whose vertical category is $\cat{C}$ (see \cref{sec:MMat} for details).
The polynomial $2$-functor induced by \eqref{eqn:polynomial-for-Fam-in-CAT} is $\Fam\colon \CAT\to \CAT$ given by the \emph{families} construction. 
As is well known, the left $2$-adjoint of $\Fam_1\colon \CAT\to \CAT/\Fam(1)=\CAT/\Set$ is 
$\Elts\colon \CAT/\Set\to \CAT$ given by the \emph{category of elements} construction (see e.g.\ \cite[Proposition~5.4]{Johnson-PhD}).
The functor $P\colon \Set_\ast\to \Set$ is the \emph{universal discrete opfibration} (with small fibers) in $\CAT$, in the sense that any discrete opfibration (with small fibers) in $\CAT$ can be obtained by a pullback of $P$; in fact, one can use $\Elts$ to show this. 

Now, it turns out that $\VDbl$ has a universal discrete opfibration as well:
this is not $P_\hc\colon (\Set_\ast)_\hc\to \Set_\hc$, but a ``larger'' discrete opfibration $Q\colon \SSpan_\ast\to\SSpan$ defined in \cref{sec:FFam}.
This gives rise to a polynomial
\begin{equation}
\label{eqn:polynomial-for-fam}
\begin{tikzpicture}[baseline=-\the\dimexpr\fontdimen22\textfont2\relax ]
      \node(0) at (0,0) {$1$};
      \node(1)at (2,0) {$\SSpan_\ast$};
      \node(2) at (4,0) {$\SSpan$};
      \node(3) at (6,0) {$1$};
      \draw [->] (1) to node[auto,swap, labelsize] {$!_{\SSpan_\ast}$}  (0);
      \draw [->] (1) to node[auto,labelsize] {$Q$}  (2);
      \draw [->] (2) to node[auto, labelsize] {$!_{\SSpan}$}  (3);
\end{tikzpicture}
\end{equation}
in $\VDbl$, inducing the polynomial $2$-functor $\FFam\colon \VDbl\to \VDbl$ which gives rise to a suitable {\em families} construction for virtual double categories. 
The left $2$-adjoint of $\FFam_1\colon \VDbl\to \VDbl/\FFam(1)=\VDbl/\SSpan$ is $\EElts\colon \VDbl/\SSpan\to \VDbl$, a suitable \emph{virtual double category of elements} construction.
Our $\FFam$ and $\EElts$ generalize known constructions \cite{Pare-Fam,Pare-Yoneda,Patterson-products} for pseudo double categories to virtual double categories; see \cref{rmk:Fam-for-PsDbl,rmk:Elt-for-PsDbl}.

In fact $\Enr\colon\VDbl\to\twoCAT$ and various others of the
parametric right 2-adjoints we construct have the stronger property of
being {\em familial} \cite{Weber-familial}. This implies among other
things that they
preserve various classes of fibrations and that they preserve
bicategorical pullbacks.

\subsection*{Convention on size}
We fix two Grothendieck universes $\cat{U}_1$ and $\cat{U}_2$ with $\cat{U}_1\in\cat{U}_2$. 
Sets, categories, etc.\ are called \emph{small} (resp.\ \emph{large}) if they are in $\cat{U}_1$ (resp.\ in $\cat{U}_2$).

\subsection*{Outline of the paper}

In Section~\ref{sec:vdbl-and-enrichment} we recall the basic definitions of virtual double categories and enrichment over virtual double categories, then in Section~\ref{sec:L-explicitly} we give an explicit construction of the left $2$-adjoint of $\Enr_1\colon\VDbl\to\twoCAT/\Enr(1)$, as well as arguments showing that the monoidal category and bicategory versions of $\Enr_1$ do not have adjoints. In Section~\ref{sec:poly-pra-fam} we recall the basic definitions and basic facts about parametric right 2-adjoints, familial 2-functors, and polynomial 2-functors, while in Section~\ref{sec:dof} we show that any discrete opfibration of virtual double categories is powerful. In Section~\ref{sec:MMat} we construct the polynomial 2-functor $\MMat\colon\VDbl\to\VDbl$ and show that it is (not just a parametric right 2-adjoint but) familial. In Section~\ref{sec:units} we introduce a 2-category $\VDbln$ of unital virtual double categories (that is, virtual double categories with chosen horizontal units), and construct various adjoints involving $\VDbln$, allowing us to prove that the 2-functor $\Enr\colon\VDbl\to\twoCAT$ is familial. Finally in Section~\ref{sec:FFam} we construct the polynomial 2-functor $\FFam\colon\VDbl\to\VDbl$, show that it is familial, and explain how its status as a parametric right 2-adjoint relates to the elements construction. We also show how it can be used to give a purely formal construction of 2-categories of enriched categories.

\subsection*{Acknowledgments}
The first-named author acknowledges the support of JSPS Overseas Research Fellowships and the Grant Agency of the Czech Republic under the grant 22-02964S.

\section{Virtual double categories and enrichment over them}\label{sec:vdbl-and-enrichment}

\subsection{Virtual double categories}\label{subsec:vdbl}
We recall the notions of virtual double category, virtual double functor, and vertical transformation, and fix our notation. Modern references include \cite{Leinster-HOHC,Cruttwell-Shulman-unified}: the name virtual double category was introduced in \cite{Cruttwell-Shulman-unified}; in \cite{Leinster-generalized-enrichment,Leinster-HOHC} they were known as {\em fc-multicategories}.

\begin{definition}
A \emph{virtual double category} $\vdcat{A}$ consists of the following data.
\begin{itemize}
    \item First we have a category $\vdcat{A}_\vc$ called the \emph{vertical category} of $\vdcat{A}$. The objects of $\vdcat{A}_\vc$ are called the \emph{objects} of $\vdcat{A}$ and the morphisms of $\vdcat{A}_\vc$ are called the \emph{vertical morphisms} of $\vdcat{A}$. A typical vertical morphism $u\colon A\to B$ and the identity vertical morphism $\id_A\colon A\to A$ on an object $A$ are denoted by  
\[
\begin{tikzpicture}[baseline=-\the\dimexpr\fontdimen22\textfont2\relax ]
      \node(0) at (0,0.75) {$A$};
      \node(1) at (0,-0.75) {$B$};
      \draw [->] (0) to node[auto, labelsize] {$u$} (1); 
\end{tikzpicture}\qquad \text{and}\qquad 
\begin{tikzpicture}[baseline=-\the\dimexpr\fontdimen22\textfont2\relax ]
      \node(0) at (0,0.75) {$A$};
      \node(1) at (0,-0.75) {$A$};
      \draw [double equal sign distance] (0) to (1); 
\end{tikzpicture}
\]
    respectively.
    \item For each pair of objects $A,A'$ of $\vdcat{A}$, we have a set of \emph{horizontal morphisms} from $A$ to $A'$. A typical horizontal morphism $\Gamma$ from $A$ to $A'$ is denoted by 
\[
\begin{tikzpicture}[baseline=-\the\dimexpr\fontdimen22\textfont2\relax ]
      \node(0) at (0,0) {$A$};
      \node(1) at (1.5,0) {$A'$.};
      \draw [pto] (0) to node[auto, labelsize] {$\Gamma$} (1); 
\end{tikzpicture}
\]
    \item For each natural number $n$ and each configuration
\begin{equation}
\label{eqn:boundary-of-2-cell-in-vdc}
\begin{tikzpicture}[baseline=-\the\dimexpr\fontdimen22\textfont2\relax ]
      \node(00) at (0,0.75) {$A_0$};
      \node(01) at (1.5,0.75) {$A_1$};
      \node(02) at (3,0.75) {$\cdots$};
      \node(03) at (4.5,0.75) {$A_n$};
      \node(10) at (0,-0.75) {$B$};
      \node(11) at (4.5,-0.75) {$B'$}; 
      \draw [pto] (00) to node[auto, labelsize] {$\Gamma_1$} (01);  
      \draw [pto] (01) to node[auto, labelsize] {$\Gamma_2$} (02);  
      \draw [pto] (02) to node[auto, labelsize] {$\Gamma_n$} (03);  
      \draw [pto] (10) to node[auto, swap, labelsize] {$\Delta$} (11); 
      \draw [->] (00) to node[auto, swap, labelsize] {$u$} (10); 
      \draw [->] (03) to node[auto,labelsize] {$u'$} (11);
\end{tikzpicture}    
\end{equation}
    in $\vdcat{A}$, we have a set of \emph{multicells} having \eqref{eqn:boundary-of-2-cell-in-vdc} as the boundary. If $\theta$ is such a multicell, we denote it by 
\begin{equation*}
\begin{tikzpicture}[baseline=-\the\dimexpr\fontdimen22\textfont2\relax ]
      \node(00) at (0,0.75) {$A_0$};
      \node(01) at (1.5,0.75) {$A_1$};
      \node(02) at (3,0.75) {$\cdots$};
      \node(03) at (4.5,0.75) {$A_n$};
      \node(10) at (0,-0.75) {$B$};
      \node(11) at (4.5,-0.75) {$B'$.}; 
      \draw [pto] (00) to node[auto, labelsize] {$\Gamma_1$} (01);  
      \draw [pto] (01) to node[auto, labelsize] {$\Gamma_2$} (02);  
      \draw [pto] (02) to node[auto, labelsize] {$\Gamma_n$} (03);  
      \draw [pto] (10) to node[auto, swap, labelsize] {$\Delta$} (11); 
      \draw [->] (00) to node[auto, swap, labelsize] {$u$} (10); 
      \draw [->] (03) to node[auto,labelsize] {$u'$} (11);
      \node at (2.25,0) {$\theta$}; 
\end{tikzpicture}
\end{equation*}
    Notice that in the special case $n=0$, we have 
\begin{equation*}
\begin{tikzpicture}[baseline=-\the\dimexpr\fontdimen22\textfont2\relax ]
      \node(00) at (0.75,0.75) {$A_0$};
      \node(10) at (0,-0.75) {$B$};
      \node(11) at (1.5,-0.75) {$B'$.}; 
      \draw [pto] (10) to node[auto, swap, labelsize] {$\Delta$} (11); 
      \draw [->] (00) to node[auto, swap, labelsize] {$u$} (10); 
      \draw [->] (00) to node[auto,labelsize] {$u'$} (11);
      \node at (0.75,-0.2) {$\theta$}; 
\end{tikzpicture}
\end{equation*}
    A \emph{cell} in $\vdcat{A}$ is a multicell in $\vdcat{A}$ of the form   
    \begin{equation*}
\begin{tikzpicture}[baseline=-\the\dimexpr\fontdimen22\textfont2\relax ]
      \node(00) at (0,0.75) {$A_0$};
      \node(01) at (1.5,0.75) {$A_1$};
      \node(10) at (0,-0.75) {$B$};
      \node(11) at (1.5,-0.75) {$B'$.}; 
      \draw [pto] (00) to node[auto, labelsize] {$\Gamma_1$} (01);  
      \draw [pto] (10) to node[auto, swap, labelsize] {$\Delta$} (11); 
      \draw [->] (00) to node[auto, swap, labelsize] {$u$} (10); 
      \draw [->] (01) to node[auto, labelsize] {$u'$} (11);
      \node at (0.75,0) {$\theta$}; 
\end{tikzpicture}
\end{equation*}
    \item For each horizontal morphism $\Gamma\colon A\pto A'$ in $\vdcat{A}$, we have a cell
\begin{equation*}
\begin{tikzpicture}[baseline=-\the\dimexpr\fontdimen22\textfont2\relax ]
      \node(00) at (0,0.75) {$A$};
      \node(01) at (1.5,0.75) {$A'$};
      \node(10) at (0,-0.75) {$A$};
      \node(11) at (1.5,-0.75) {$A'$}; 
      \draw [pto] (00) to node[auto, labelsize] {$\Gamma$} (01);  
      \draw [pto] (10) to node[auto, swap, labelsize] {$\Gamma$} (11); 
      \draw [double equal sign distance] (00) to (10); 
      \draw [double equal sign distance] (01) to (11);
      \node at (0.75,0) {$\id_\Gamma$}; 
\end{tikzpicture}
\end{equation*}
    called the \emph{vertical identity cell} of $\Gamma$. We often denote it by 
\begin{equation*}
\begin{tikzpicture}[baseline=-\the\dimexpr\fontdimen22\textfont2\relax ]
      \node(00) at (0,0.75) {$A$};
      \node(01) at (1.5,0.75) {$A'$};
      \node(10) at (0,-0.75) {$A$};
      \node(11) at (1.5,-0.75) {$A'$.}; 
      \draw [pto] (00) to node[auto, labelsize] {$\Gamma$} (01);  
      \draw [pto] (10) to node[auto, swap, labelsize] {$\Gamma$} (11); 
      \draw [double equal sign distance] (00) to (10); 
      \draw [double equal sign distance] (01) to (11); 
\end{tikzpicture}
\end{equation*}
    \item For each natural number $m$, each $m$-tuple of natural numbers $(n_1,\dots,n_m)$, and each configuration 
\begin{equation}
\label{eqn:composable-multicells-A}
\begin{tikzpicture}[baseline=-\the\dimexpr\fontdimen22\textfont2\relax ]
      \node(00) at (0,1.5) {$A_{1,0}$};
      \node(01) at (2,1.5) {$\cdots$};
      \node(02) at (4,1.5) {$A_{1,n_1}$};
      \node(03) at (6,1.5) {$\cdots$};
      \node(04) at (8,1.5) {$A_{m,0}$};
      \node(05) at (10,1.5) {$\cdots$};
      \node(06) at (12,1.5) {$A_{m,n_m}$};
      \node(10) at (0,0) {$B_0$};
      \node(11) at (4,0) {$B_1$}; 
      \node(12) at (6,0) {$\cdots$}; 
      \node(13) at (8,0) {$B_{m-1}$};
      \node(14) at (12,0) {$B_m$}; 
      \node(20) at (0,-1.5) {$C$};
      \node(21) at (12,-1.5) {$C'$}; 
      \draw [pto] (00) to node[auto, labelsize] {$\Gamma_{1,1}$} (01);  
      \draw [pto] (01) to node[auto, labelsize] {$\Gamma_{1,n_1}$} (02);  
      \draw [pto] (02) to node[auto, labelsize] {$\Gamma_{2,1}$} (03);  
      \draw [pto] (03) to node[auto, labelsize] {$\Gamma_{m-1,n_{m-1}}$} (04);  
      \draw [pto] (04) to node[auto, labelsize] {$\Gamma_{m,1}$} (05);  
      \draw [pto] (05) to node[auto, labelsize] {$\Gamma_{m,n_{m}}$} (06);  
      \draw [pto] (10) to node[auto, labelsize] {$\Delta_1$} (11); 
      \draw [pto] (11) to node[auto, labelsize] {$\Delta_2$} (12); 
      \draw [pto] (12) to node[auto, labelsize] {$\Delta_{m-1}$} (13); 
      \draw [pto] (13) to node[auto, labelsize] {$\Delta_m$} (14); 
      \draw [pto] (20) to node[auto, swap,labelsize] {$\Lambda$} (21); 
      \draw [->] (00) to node[auto, swap, labelsize] {$u_0$} (10); 
      \draw [->] (02) to node[auto,swap,labelsize] {$u_1$} (11);
      \draw [->] (04) to node[auto,labelsize] {$u_{m-1}$} (13);
      \draw [->] (06) to node[auto,labelsize] {$u_{m}$} (14);
      \draw [->] (10) to node[auto, swap, labelsize] {$v$} (20); 
      \draw [->] (14) to node[auto,labelsize] {$v'$} (21);
      \node at (2,0.75) {$\theta_1$}; 
      \node at (10,0.75) {$\theta_m$}; 
      \node at (6,-0.75) {$\kappa$}; 
\end{tikzpicture}
\end{equation}
    (where it is understood that $A_{i,n_i}=A_{i+1,0}$ for each $1\leq i\leq m-1$)
    in $\vdcat{A}$, we have the composite multicell
\[
\begin{tikzpicture}[baseline=-\the\dimexpr\fontdimen22\textfont2\relax ]
      \node(00) at (0,1.5) {$A_{1,0}$};
      \node(01) at (2,1.5) {$\cdots$};
      \node(02) at (4,1.5) {$A_{1,n_1}$};
      \node(03) at (6,1.5) {$\cdots$};
      \node(04) at (8,1.5) {$A_{m,0}$};
      \node(05) at (10,1.5) {$\cdots$};
      \node(06) at (12,1.5) {$A_{m,n_m}$};
      \node(10) at (0,0) {$B_0$};
      \node(14) at (12,0) {$B_m$}; 
      \node(20) at (0,-1.5) {$C$};
      \node(21) at (12,-1.5) {$C'$}; 
      \draw [pto] (00) to node[auto, labelsize] {$\Gamma_{1,1}$} (01);  
      \draw [pto] (01) to node[auto, labelsize] {$\Gamma_{1,n_1}$} (02);  
      \draw [pto] (02) to node[auto, labelsize] {$\Gamma_{2,1}$} (03);  
      \draw [pto] (03) to node[auto, labelsize] {$\Gamma_{m-1,n_{m-1}}$} (04);  
      \draw [pto] (04) to node[auto, labelsize] {$\Gamma_{m,1}$} (05);  
      \draw [pto] (05) to node[auto, labelsize] {$\Gamma_{m,n_{m}}$} (06);  
      \draw [pto] (20) to node[auto, swap,labelsize] {$\Lambda$} (21); 
      \draw [->] (00) to node[auto, swap, labelsize] {$u_0$} (10); 
      \draw [->] (06) to node[auto,labelsize] {$u_{m}$} (14);
      \draw [->] (10) to node[auto, swap, labelsize] {$v$} (20); 
      \draw [->] (14) to node[auto,labelsize] {$v'$} (21);
      \node at (6,0) {$\kappa\circ(\theta_1,\theta_2,\dots,\theta_m)$}; 
\end{tikzpicture}
\]  
    in $\vdcat{A}$. However, note that this notation is inadequate when $m=0$; in this case, for each configuration
    \[\begin{tikzpicture}[baseline=-\the\dimexpr\fontdimen22\textfont2\relax ]
      \node(20) at (0.75,1.5) {$A$};
      \node(00) at (0.75,0) {$B$};
      \node(10) at (0,-1.5) {$C$};
      \node(11) at (1.5,-1.5) {$C'$}; 
      \draw [pto] (10) to node[auto, swap, labelsize] {$\Lambda$} (11); 
      \draw [->] (20) to node[auto, labelsize] {$u$} (00); 
      \draw [->] (00) to node[auto, swap, labelsize] {$v$} (10); 
      \draw [->] (00) to node[auto,labelsize] {$v'$} (11);
      \node at (0.75,-0.95) {$\kappa$}; 
\end{tikzpicture}\]
in $\vdcat{A}$, we have the composite multicell
    \[\begin{tikzpicture}[baseline=-\the\dimexpr\fontdimen22\textfont2\relax ]
      \node(20) at (0.75,1.5) {$A$};
      \node(00) at (0.375,0) {$B$};
      \node(01) at (1.125,0) {$B$};
      \node(10) at (0,-1.5) {$C$};
      \node(11) at (1.5,-1.5) {$C'$.}; 
      \draw [pto] (10) to node[auto, swap, labelsize] {$\Lambda$} (11); 
      \draw [->] (20) to node[auto,swap, labelsize] {$u$} (00); 
      \draw [->] (20) to node[auto, labelsize] {$u$} (01); 
      \draw [->] (00) to node[auto, swap, labelsize] {$v$} (10); 
      \draw [->] (01) to node[auto,labelsize] {$v'$} (11);
      \node at (0.75,-0.75) {$\kappa\circ (u)$}; 
\end{tikzpicture}\]
\end{itemize}
These data are subject to the evident unit and associativity axioms; see e.g.\ \cite[Definition~2.1]{Cruttwell-Shulman-unified} for more details.
\end{definition}

\begin{remark}\label{rmk:vdbl-cat-as-T-cat}
    One can identify virtual double categories with \emph{$T$-categories} \cite{Burroni-T-cats,Leinster-HOHC}, where $T$ is the free category monad on the category $\mathbf{Gph}$ of graphs.
    In particular, a virtual double category $\vdcat{A}$ has an underlying \emph{$T$-graph}, which is a diagram 
\begin{equation*}
\begin{tikzpicture}[baseline=-\the\dimexpr\fontdimen22\textfont2\relax ]
      \node(0) at (0,0) {$TA_0$};
      \node(1)at (2,0) {$A_1$};
      \node(2) at (4,0) {$A_0$};
      \draw [->] (1) to node[auto,swap, labelsize] {$d_1$}  (0);
      \draw [->] (1) to node[auto,labelsize] {$d_0$}  (2);
\end{tikzpicture}
\end{equation*}
    in $\mathbf{Gph}$. The graph $A_0$ has objects and horizontal morphisms of $\vdcat{A}$ as its vertices and edges, respectively, whereas the graph $A_1$ has vertical morphisms and multicells of $\vdcat{A}$ as its vertices and edges.
    See \cite[III.3]{Burroni-T-cats} or \cite[Chapter~5]{Leinster-HOHC} for details.
\end{remark}

\begin{definition}
Given virtual double categories $\vdcat{A}$ and $\vdcat{B}$, a \emph{virtual double functor} $F\colon\vdcat{A}\to\vdcat{B}$ consists of the following data.
\begin{itemize}
    \item First we have a functor $F_\vc\colon\vdcat{A}_\vc\to\vdcat{B}_\vc$ between the vertical categories. We write the image of an object $A$ or a vertical morphism $u$ in $\vdcat{A}$ under $F_\vc$ simply as $FA$ and $Fu$, respectively.
    \item We have a function mapping each horizontal morphism $\Gamma\colon A\pto A'$ in $\vdcat{A}$ to a horizontal morphism $F\Gamma\colon FA\pto FA'$ in $\vdcat{B}$.
    \item We have a function mapping each multicell 
\begin{equation*}
\begin{tikzpicture}[baseline=-\the\dimexpr\fontdimen22\textfont2\relax ]
      \node(00) at (0,0.75) {$A_0$};
      \node(01) at (1.5,0.75) {$A_1$};
      \node(02) at (3,0.75) {$\cdots$};
      \node(03) at (4.5,0.75) {$A_n$};
      \node(10) at (0,-0.75) {$A$};
      \node(11) at (4.5,-0.75) {$A'$}; 
      \draw [pto] (00) to node[auto, labelsize] {$\Gamma_1$} (01);  
      \draw [pto] (01) to node[auto, labelsize] {$\Gamma_2$} (02);  
      \draw [pto] (02) to node[auto, labelsize] {$\Gamma_n$} (03);  
      \draw [pto] (10) to node[auto, swap, labelsize] {$\Gamma$} (11); 
      \draw [->] (00) to node[auto, swap, labelsize] {$u$} (10); 
      \draw [->] (03) to node[auto,labelsize] {$u'$} (11);
      \node at (2.25,0) {$\theta$}; 
\end{tikzpicture}
\end{equation*}
    in $\vdcat{A}$ to a multicell 
\begin{equation*}
\begin{tikzpicture}[baseline=-\the\dimexpr\fontdimen22\textfont2\relax ]
      \node(00) at (0,0.75) {$FA_0$};
      \node(01) at (1.5,0.75) {$FA_1$};
      \node(02) at (3,0.75) {$\cdots$};
      \node(03) at (4.5,0.75) {$FA_n$};
      \node(10) at (0,-0.75) {$FA$};
      \node(11) at (4.5,-0.75) {$FA'$}; 
      \draw [pto] (00) to node[auto, labelsize] {$F\Gamma_1$} (01);  
      \draw [pto] (01) to node[auto, labelsize] {$F\Gamma_2$} (02);  
      \draw [pto] (02) to node[auto, labelsize] {$F\Gamma_n$} (03);  
      \draw [pto] (10) to node[auto, swap, labelsize] {$F\Gamma$} (11); 
      \draw [->] (00) to node[auto, swap, labelsize] {$Fu$} (10); 
      \draw [->] (03) to node[auto,labelsize] {$Fu'$} (11);
      \node at (2.25,0) {$F\theta$}; 
\end{tikzpicture}
\end{equation*}
    in $\vdcat{B}$.
\end{itemize}
These data are subject to the following axioms. 
\begin{itemize}
    \item For each horizontal morphism $\Gamma$ in $\vdcat{A}$, we have $F(\id_\Gamma)=\id_{F\Gamma}$.
    \item For each configuration \eqref{eqn:composable-multicells-A} in $\vdcat{A}$, we have $F\bigl(\kappa\circ(\theta_1,\theta_2,\dots,\theta_m)\bigr)=F\kappa\circ(F\theta_1,F\theta_2,\dots,F\theta_m)$. 
\end{itemize}
\end{definition}

\begin{definition}
Given a parallel pair of virtual double functors $F,G\colon\vdcat{A}\to\vdcat{B}$, a \emph{vertical transformation} $\alpha\colon F\to G$ consists of the following data.
\begin{itemize}
    \item First we have a natural transformation $\alpha_\vc\colon F_\vc\to G_\vc$. For each object $A$ in $\vdcat{A}$, its component at $A$ is a vertical morphism
\[
\begin{tikzpicture}[baseline=-\the\dimexpr\fontdimen22\textfont2\relax ]
      \node(0) at (0,0.75) {$FA$};
      \node(1) at (0,-0.75) {$GA$};
      \draw [->] (0) to node[auto, labelsize] {$\alpha_A$} (1); 
\end{tikzpicture}
\]
    in $\vdcat{B}$.
    \item For each horizontal morphism $\Gamma\colon A\pto A'$ in $\vdcat{A}$, we have a cell
\begin{equation*}
\begin{tikzpicture}[baseline=-\the\dimexpr\fontdimen22\textfont2\relax ]
      \node(00) at (0,0.75) {$FA$};
      \node(01) at (1.5,0.75) {$FA'$};
      \node(10) at (0,-0.75) {$GA$};
      \node(11) at (1.5,-0.75) {$GA'$}; 
      \draw [pto] (00) to node[auto, labelsize] {$F\Gamma$} (01);  
      \draw [pto] (10) to node[auto, swap, labelsize] {$G\Gamma$} (11); 
      \draw [->] (00) to node[auto, swap, labelsize] {$\alpha_A$} (10); 
      \draw [->] (01) to node[auto, labelsize] {$\alpha_{A'}$} (11);
      \node at (0.75,0) {$\alpha_\Gamma$}; 
\end{tikzpicture}
\end{equation*}
    in $\vdcat{B}$.
\end{itemize}
These data are subject to the following axiom.
\begin{itemize}
    \item For each multicell 
\begin{equation*}
\begin{tikzpicture}[baseline=-\the\dimexpr\fontdimen22\textfont2\relax ]
      \node(00) at (0,0.75) {$A_0$};
      \node(01) at (1.5,0.75) {$A_1$};
      \node(02) at (3,0.75) {$\cdots$};
      \node(03) at (4.5,0.75) {$A_n$};
      \node(10) at (0,-0.75) {$A$};
      \node(11) at (4.5,-0.75) {$A'$}; 
      \draw [pto] (00) to node[auto, labelsize] {$\Gamma_1$} (01);  
      \draw [pto] (01) to node[auto, labelsize] {$\Gamma_2$} (02);  
      \draw [pto] (02) to node[auto, labelsize] {$\Gamma_n$} (03);  
      \draw [pto] (10) to node[auto, swap, labelsize] {$\Gamma$} (11); 
      \draw [->] (00) to node[auto, swap, labelsize] {$u$} (10); 
      \draw [->] (03) to node[auto,labelsize] {$u'$} (11);
      \node at (2.25,0) {$\theta$}; 
\end{tikzpicture}
\end{equation*}
    in $\vdcat{A}$, we have the equality $\alpha_{\Gamma}\circ (F\theta)= G\theta\circ(\alpha_{\Gamma_1},\alpha_{\Gamma_2},\dots,\alpha_{\Gamma_n})$ in $\vdcat{B}$.
\end{itemize}
\end{definition}

Large virtual double categories, virtual double functors, and vertical transformations form a 2-category $\VDbl$.

\subsection{Enrichment over a virtual double category}\label{subsec:enrichment-over-vdbl}
We recall enriched category theory over a virtual double category, following \cite{Leinster-generalized-enrichment}.
Let $\vdcat{A}$ be a virtual double category.
\begin{definition}
    An \emph{$\vdcat{A}$-category} $\ecat{C}$ consists of the following data.
    \begin{itemize}
        \item We have a set $\ob(\ecat{C})$ of \emph{objects} of $\ecat{C}$. We write $c\in \ecat{C}$ to mean $c\in \ob(\ecat{C})$. We call the $\vdcat{A}$-category $\ecat{C}$ \emph{small} if $\ob(\ecat{C})$ is a small set.
        \item For each object $c\in \ecat{C}$, we have an object $|c|^\ecat{C}$ of $\vdcat{A}$, which is called the \emph{extent} of $c$. 
        \item For each $c,c'\in\ecat{C}$, we have a horizontal morphism $\ecat{C}(c,c')\colon |c|^\ecat{C}\pto |c'|^\ecat{C}$ in $\vdcat{A}$. 
        \item For each $c\in \ecat{C}$, we have a multicell
    \begin{equation*}
    \begin{tikzpicture}[baseline=-\the\dimexpr\fontdimen22\textfont2\relax ]
      \node(20) at (1,0.75) {$|c|^\ecat{C}$};
      \node(00) at (0,-0.75) {$|c|^\ecat{C}$};
      \node(01) at (2,-0.75) {$|c|^\ecat{C}$}; 
      \draw [pto] (00) to node[auto,swap,labelsize] {$\ecat{C}(c,c)$} (01);
      \draw [double equal sign distance] (20) to (00); 
      \draw [double equal sign distance] (20) to (01); 
      \node at (1,-0.2) {$\eta_c^\ecat{C}$};
\end{tikzpicture}
    \end{equation*}
        in $\vdcat{A}$. 
        \item For each $c_0,c_1,c_2\in\ecat{C}$, we have a multicell 
\begin{equation*}
      \begin{tikzpicture}[baseline=-\the\dimexpr\fontdimen22\textfont2\relax ]
      \node(00) at (0,0.75) {$|c_0|^\ecat{C}$};
      \node(01) at (2,0.75) {$|c_1|^\ecat{C}$};
      \node(02) at (4,0.75) {$|c_2|^\ecat{C}$};
      \node(10) at (0,-0.75) {$|c_0|^\ecat{C}$};
      \node(11) at (4,-0.75) {$|c_2|^\ecat{C}$}; 
      \draw [pto] (00) to node[auto,labelsize] {$\ecat{C}(c_0,c_1)$} (01);
      \draw [pto] (01) to node[auto,labelsize] {$\ecat{C}(c_1,c_2)$} (02);
      \draw [pto] (10) to node[auto, swap, labelsize] {$\ecat{C}(c_0,c_2)$} (11); 
      \draw [double equal sign distance] (00) to (10); 
      \draw [double equal sign distance] (02) to (11);
      \node at (2,0) {$\mu_{c_0,c_1,c_2}^\ecat{C}$}; 
\end{tikzpicture}
\end{equation*}
        in $\vdcat{A}$. 
    \end{itemize}
    These data are subject to the following axioms. 
    \begin{itemize}
        \item For each $c,c'\in\ecat{C}$, we have 
        \begin{equation*}
\begin{tikzpicture}[baseline=-\the\dimexpr\fontdimen22\textfont2\relax ]
      \node(20) at (1,1.5) {$|c|^\ecat{C}$};
      \node(21) at (3,1.5) {$|c'|^\ecat{C}$};
      \node(00) at (0,0) {$|c|^\ecat{C}$};
      \node(01) at (2,0) {$|c|^\ecat{C}$};
      \node(02) at (4,0) {$|c'|^\ecat{C}$};
      \node(10) at (0,-1.5) {$|c|^\ecat{C}$};
      \node(11) at (4,-1.5) {$|c'|^\ecat{C}$}; 
      \draw [pto] (20) to node[auto, labelsize] {$\ecat{C}(c,c')$} (21);
      \draw [pto] (00) to node[auto, swap,labelsize] {$\ecat{C}(c,c)$} (01);
      \draw [pto] (01) to node[auto, swap,labelsize] {$\ecat{C}(c,c')$} (02);
      \draw [pto] (10) to node[auto, swap, labelsize] {$\ecat{C}(c,c')$} (11); 
      \draw [double equal sign distance] (20) to (00); 
      \draw [double equal sign distance] (20) to (01); 
      \draw [double equal sign distance] (21) to (02); 
      \draw [double equal sign distance] (00) to (10); 
      \draw [double equal sign distance] (02) to (11);
      \node at (1,0.5) {$\eta_c^\ecat{C}$};
      \node at (2,-0.75) {$\mu_{c,c,c'}^\ecat{C}$}; 
\end{tikzpicture}
\quad = \quad \id_{\ecat{C}(c,c')} \quad = \quad
    \begin{tikzpicture}[baseline=-\the\dimexpr\fontdimen22\textfont2\relax ]
      \node(20) at (1,1.5) {$|c|^\ecat{C}$};
      \node(21) at (3,1.5) {$|c'|^\ecat{C}$};
      \node(00) at (0,0) {$|c|^\ecat{C}$};
      \node(01) at (2,0) {$|c'|^\ecat{C}$};
      \node(02) at (4,0) {$|c'|^\ecat{C}$};
      \node(10) at (0,-1.5) {$|c|^\ecat{C}$};
      \node(11) at (4,-1.5) {$|c'|^\ecat{C}$.}; 
      \draw [pto] (20) to node[auto, labelsize] {$\ecat{C}(c,c')$} (21);
      \draw [pto] (00) to node[auto, swap,labelsize] {$\ecat{C}(c,c')$} (01);
      \draw [pto] (01) to node[auto, swap,labelsize] {$\ecat{C}(c',c')$} (02);
      \draw [pto] (10) to node[auto, swap, labelsize] {$\ecat{C}(c,c')$} (11); 
      \draw [double equal sign distance] (20) to (00); 
      \draw [double equal sign distance] (21) to (01); 
      \draw [double equal sign distance] (21) to (02); 
      \draw [double equal sign distance] (00) to (10); 
      \draw [double equal sign distance] (02) to (11);
      \node at (3,0.5) {$\eta_{c'}^\ecat{C}$};
      \node at (2,-0.75) {$\mu_{c,c',c'}^\ecat{C}$}; 
\end{tikzpicture}
        \end{equation*}
        \item For each $c_0,c_1,c_2,c_3\in\ecat{C}$, we have 
        \begin{equation*}
            \begin{tikzpicture}[baseline=-\the\dimexpr\fontdimen22\textfont2\relax ]
      \node(00) at (0,1.5) {$|c_0|^\ecat{C}$};
      \node(01) at (2,1.5) {$|c_1|^\ecat{C}$};
      \node(02) at (4,1.5) {$|c_2|^\ecat{C}$};
      \node(03) at (6,1.5) {$|c_3|^\ecat{C}$};
      \node(10) at (0,0) {$|c_0|^\ecat{C}$};
      \node(12) at (4,0) {$|c_2|^\ecat{C}$};
      \node(13) at (6,0) {$|c_3|^\ecat{C}$};
      \node(20) at (0,-1.5) {$|c_0|^\ecat{C}$};
      \node(23) at (6,-1.5) {$|c_3|^\ecat{C}$}; 
      \draw [pto] (00) to node[auto, labelsize] {$\ecat{C}(c_0,c_1)$} (01);
      \draw [pto] (01) to node[auto, labelsize] {$\ecat{C}(c_1,c_2)$} (02);
      \draw [pto] (02) to node[auto, labelsize] {$\ecat{C}(c_2,c_3)$} (03);
      \draw [pto] (10) to node[auto, labelsize] {$\ecat{C}(c_0,c_2)$} (12);
      \draw [pto] (12) to node[auto, labelsize] {$\ecat{C}(c_2,c_3)$} (13);
      \draw [pto] (20) to node[auto, swap, labelsize] {$\ecat{C}(c_0,c_3)$} (23);
      \draw [double equal sign distance] (00) to (10);
      \draw [double equal sign distance] (02) to (12);
      \draw [double equal sign distance] (03) to (13);
      \draw [double equal sign distance] (10) to (20);
      \draw [double equal sign distance] (13) to (23);
      \node at (2,0.75) {$\mu_{c_0,c_1,c_2}^\ecat{C}$};
      \node at (3,-0.75) {$\mu_{c_0,c_2,c_3}^\ecat{C}$}; 
\end{tikzpicture}
\quad=\quad
\begin{tikzpicture}[baseline=-\the\dimexpr\fontdimen22\textfont2\relax ]
      \node(00) at (0,1.5) {$|c_0|^\ecat{C}$};
      \node(01) at (2,1.5) {$|c_1|^\ecat{C}$};
      \node(02) at (4,1.5) {$|c_2|^\ecat{C}$};
      \node(03) at (6,1.5) {$|c_3|^\ecat{C}$};
      \node(10) at (0,0) {$|c_0|^\ecat{C}$};
      \node(11) at (2,0) {$|c_1|^\ecat{C}$};
      \node(13) at (6,0) {$|c_3|^\ecat{C}$};
      \node(20) at (0,-1.5) {$|c_0|^\ecat{C}$};
      \node(23) at (6,-1.5) {$|c_3|^\ecat{C}$.}; 
      \draw [pto] (00) to node[auto, labelsize] {$\ecat{C}(c_0,c_1)$} (01);
      \draw [pto] (01) to node[auto, labelsize] {$\ecat{C}(c_1,c_2)$} (02);
      \draw [pto] (02) to node[auto, labelsize] {$\ecat{C}(c_2,c_3)$} (03);
      \draw [pto] (10) to node[auto, labelsize] {$\ecat{C}(c_0,c_1)$} (11);
      \draw [pto] (11) to node[auto, labelsize] {$\ecat{C}(c_1,c_3)$} (13);
      \draw [pto] (20) to node[auto, swap, labelsize] {$\ecat{C}(c_0,c_3)$} (23);
      \draw [double equal sign distance] (00) to (10);
      \draw [double equal sign distance] (01) to (11);
      \draw [double equal sign distance] (03) to (13);
      \draw [double equal sign distance] (10) to (20);
      \draw [double equal sign distance] (13) to (23);
      \node at (4,0.75) {$\mu_{c_1,c_2,c_3}^\ecat{C}$};
      \node at (3,-0.75) {$\mu_{c_0,c_1,c_3}^\ecat{C}$}; 
\end{tikzpicture}
        \end{equation*}
    \end{itemize}
\end{definition}

\begin{definition}
    Let $\ecat{C}$ and $\ecat{D}$ be $\vdcat{A}$-categories. An \emph{$\vdcat{A}$-functor} $f\colon \ecat{C}\to \ecat{D}$ consists of the following data. 
    \begin{itemize}
        \item First we have a function $\ob(f)\colon \ob(\ecat{C})\to \ob(\ecat{D})$. We write the image of $c\in \ecat{C}$ under this function as $fc\in \ecat{D}$. 
        \item For each $c\in \ecat{C}$, we have a vertical morphism 
\begin{equation*}
\begin{tikzpicture}[baseline=-\the\dimexpr\fontdimen22\textfont2\relax ]
      \node(00) at (0,0.75) {$|c|^\ecat{C}$};
      \node(10) at (0,-0.75) {$|fc|^\ecat{D}$};
      \draw [->] (00) to node[auto, labelsize] {$f_c$} (10); 
\end{tikzpicture}
\end{equation*}
    in $\vdcat{A}$. 
    \item For each $c,c'\in\ecat{C}$, we have a cell
        \begin{equation*}
\begin{tikzpicture}[baseline=-\the\dimexpr\fontdimen22\textfont2\relax ]
      \node(00) at (0,0.75) {$|c|^\ecat{C}$};
      \node(01) at (2,0.75) {$|c'|^\ecat{C}$};
      \node(10) at (0,-0.75) {$|fc|^\ecat{D}$};
      \node(11) at (2,-0.75) {$|fc'|^\ecat{D}$}; 
      \draw [pto] (00) to node[auto, labelsize] {$\ecat{C}(c,c')$} (01);  
      \draw [pto] (10) to node[auto, swap, labelsize] {$\ecat{D}(fc,fc')$} (11); 
      \draw [->] (00) to node[auto, swap, labelsize] {$f_c$} (10); 
      \draw [->] (01) to node[auto, labelsize] {$f_{c'}$} (11);
      \node at (1,0) {$\omega_{c,c'}^f$}; 
\end{tikzpicture}
\end{equation*}
    in $\vdcat{A}$. 
    \end{itemize}
    These data are subject to the following axioms.
\begin{itemize}
        \item For each $c\in \ecat{C}$, we have 
\begin{equation*}
    \begin{tikzpicture}[baseline=-\the\dimexpr\fontdimen22\textfont2\relax ]
      \node(20) at (1,1.5) {$|c|^\ecat{C}$};
      \node(00) at (0,0) {$|c|^\ecat{C}$};
      \node(01) at (2,0) {$|c|^\ecat{C}$};
      \node(10) at (0,-1.5) {$|fc|^\ecat{D}$};
      \node(11) at (2,-1.5) {$|fc|^\ecat{D}$}; 
      \draw [pto] (00) to node[auto, labelsize] {$\ecat{C}(c,c)$} (01);  
      \draw [pto] (10) to node[auto, swap, labelsize] {$\ecat{D}(fc,fc)$} (11); 
      \draw [double equal sign distance] (20) to node[auto, swap, labelsize] {} (00); 
      \draw [double equal sign distance] (20) to node[auto,labelsize] {} (01);
      \draw [->] (00) to node[auto, swap, labelsize] {$f_c$} (10); 
      \draw [->] (01) to node[auto,labelsize] {$f_c$} (11);
      \node at (1,0.65) {$\eta_c^\ecat{C}$}; 
      \node at (1,-0.75) {$\omega_{c,c}^f$}; 
\end{tikzpicture}
\quad=\quad
\begin{tikzpicture}[baseline=-\the\dimexpr\fontdimen22\textfont2\relax ]
      \node(20) at (1,1.5) {$|c|^\ecat{C}$};
      \node(00) at (1,0) {$|fc|^\ecat{D}$};
      \node(10) at (0,-1.5) {$|fc|^\ecat{D}$};
      \node(11) at (2,-1.5) {$|fc|^\ecat{D}$.}; 
      \draw [pto] (10) to node[auto, swap, labelsize] {$\ecat{D}(fc,fc)$} (11); 
      \draw [->] (20) to node[auto, labelsize] {$f_c$} (00); 
      \draw [double equal sign distance] (00) to (10); 
      \draw [double equal sign distance] (00) to (11);
      \node at (1,-0.95) {$\eta^\ecat{D}_{fc}$}; 
\end{tikzpicture}
\end{equation*}
    \item For each $c_0,c_1,c_2\in\ecat{C}$, we have 
\begin{equation*}
\label{eqn:monad-morphism-preserves-mult}
\begin{tikzpicture}[baseline=-\the\dimexpr\fontdimen22\textfont2\relax ]
      \node(00) at (0,1.5) {$|c_0|^\ecat{C}$};
      \node(02) at (2.5,1.5) {$|c_1|^\ecat{C}$};
      \node(01) at (5,1.5) {$|c_2|^\ecat{C}$};
      \node(10) at (0,0) {$|c_0|^\ecat{C}$};
      \node(11) at (5,0) {$|c_2|^\ecat{C}$}; 
      \node(20) at (0,-1.5) {$|fc_0|^\ecat{D}$};
      \node(21) at (5,-1.5) {$|fc_2|^\ecat{D}$}; 
      \draw [pto] (00) to node[auto, labelsize] {$\ecat{C}(c_0,c_1)$} (02);
      \draw [pto] (02) to node[auto, labelsize] {$\ecat{C}(c_1,c_2)$} (01);
      \draw [pto] (10) to node[auto, labelsize] {$\ecat{C}(c_0,c_2)$} (11);
      \draw [pto] (20) to node[auto,swap, labelsize] {$\ecat{D}(fc_0,fc_2)$} (21);
      \draw [double equal sign distance] (00) to (10); 
      \draw [double equal sign distance] (01) to (11);
      \draw [->] (10) to node[auto, swap, labelsize] {$f_{c_0}$} (20); 
      \draw [->] (11) to node[auto,labelsize] {$f_{c_2}$} (21);
      \node at (2.5,0.75) {$\mu^\ecat{C}_{c_0,c_1,c_2}$}; 
      \node at (2.5,-0.75) {$\omega_{c_0,c_2}^f$}; 
\end{tikzpicture}\qquad =\qquad 
\begin{tikzpicture}[baseline=-\the\dimexpr\fontdimen22\textfont2\relax ]
      \node(00) at (0,1.5) {$|c_0|^\ecat{C}$};
      \node(02) at (2.5,1.5) {$|c_1|^\ecat{C}$};
      \node(01) at (5,1.5) {$|c_2|^\ecat{C}$};
      \node(10) at (0,0) {$|fc_0|^\ecat{D}$};
      \node(12) at (2.5,0) {$|fc_1|^\ecat{D}$};
      \node(11) at (5,0) {$|fc_2|^\ecat{D}$}; 
      \node(20) at (0,-1.5) {$|fc_0|^\ecat{D}$};
      \node(21) at (5,-1.5) {$|fc_2|^\ecat{D}$.}; 
      \draw [pto] (00) to node[auto, labelsize] {$\ecat{C}(c_0,c_1)$} (02);
      \draw [pto] (02) to node[auto, labelsize] {$\ecat{C}(c_1,c_2)$} (01);
      \draw [pto] (10) to node[auto, labelsize] {$\ecat{D}(fc_0,fc_1)$} (12);
      \draw [pto] (12) to node[auto, labelsize] {$\ecat{D}(fc_1,fc_2)$} (11);
      \draw [pto] (20) to node[auto,swap, labelsize] {$\ecat{D}(fc_0,fc_2)$} (21); 
      \draw [->] (00) to node[auto, swap, labelsize] {$f_{c_0}$} (10); 
      \draw [->] (01) to node[auto,labelsize] {$f_{c_2}$} (11);
      \draw [->] (02) to node[auto,labelsize] {$f_{c_1}$} (12);
      \draw [double equal sign distance] (10) to (20); 
      \draw [double equal sign distance] (11) to (21);
      \node at (1.25,0.75) {$\omega_{c_0,c_1}^f$}; 
      \node at (3.75,0.75) {$\omega_{c_1,c_2}^f$}; 
      \node at (2.5,-0.75) {$\mu^\ecat{D}_{fc_0,fc_1,fc_2}$}; 
\end{tikzpicture}
\end{equation*}
\end{itemize}
\end{definition}

\begin{definition}
    Let $\ecat{C}$ and $\ecat{D}$ be $\vdcat{A}$-categories, and $f,{f}'\colon \ecat{C}\to \ecat{D}$ be $\vdcat{A}$-functors. An \emph{$\vdcat{A}$-natural transformation} $\alpha\colon {f}\to{f}'$ consists of a family of multicells 
\begin{equation*}
\begin{tikzpicture}[baseline=-\the\dimexpr\fontdimen22\textfont2\relax ]
      \node(20) at (1,0.75) {$|c|^\ecat{C}$};
      \node(00) at (0,-0.75) {$|fc|^\ecat{D}$};
      \node(01) at (2,-0.75) {$|f'c|^\ecat{D}$}; 
      \draw [pto] (00) to node[auto,swap,labelsize] {$\ecat{D}(fc,f'c)$} (01);
      \draw [->] (20) to node[auto,swap,labelsize] {${f}_c$} (00); 
      \draw [->] (20) to node[auto,labelsize] {${f}'_c$} (01); 
      \node at (1,-0.2) {$\alpha_c$};
\end{tikzpicture}
\end{equation*}
    in $\vdcat{A}$ indexed by $c\in \ecat{C}$, such that 
\begin{equation*}
\begin{tikzpicture}[baseline=-\the\dimexpr\fontdimen22\textfont2\relax ]
      \node(20) at (1.25,1.5) {$|c|^\ecat{C}$};
      \node(21) at (3.75,1.5) {$|c'|^\ecat{C}$};
      \node(00) at (0,0) {$|fc|^\ecat{D}$};
      \node(01) at (2.5,0) {$|f'c|^\ecat{D}$};
      \node(02) at (5,0) {$|f'c'|^\ecat{D}$};
      \node(10) at (0,-1.5) {$|fc|^\ecat{D}$};
      \node(11) at (5,-1.5) {$|f'c'|^\ecat{D}$}; 
      \draw [pto] (20) to node[auto, labelsize] {$\ecat{C}(c,c')$} (21);
      \draw [pto] (00) to node[auto, swap,labelsize] {$\ecat{D}(fc,f'c)$} (01);
      \draw [pto] (01) to node[auto, swap,labelsize] {$\ecat{D}(f'c,f'c')$} (02);
      \draw [pto] (10) to node[auto, swap, labelsize] {$\ecat{D}(fc,f'c')$} (11);
      \draw [->] (20) to node[auto,swap,labelsize] {$f_c$} (00); 
      \draw [->] (20) to node[auto,labelsize] {${f}'_c$} (01);
      \draw [->] (21) to node[auto, labelsize] {${f}'_{c'}$} (02); 
      \draw [double equal sign distance] (00) to (10); 
      \draw [double equal sign distance] (02) to (11);
      \node at (1.25,0.5) {$\alpha_c$};
      \node at (3.125,0.75) {$\omega^{f'}_{c,c'}$};
      \node at (2.5,-0.75) {$\mu^\ecat{D}_{fc,f'c,f'c'}$}; 
\end{tikzpicture}
\quad = \quad
\begin{tikzpicture}[baseline=-\the\dimexpr\fontdimen22\textfont2\relax ]
      \node(20) at (1.25,1.5) {$|c|^\ecat{C}$};
      \node(21) at (3.75,1.5) {$|c'|^\ecat{C}$};
      \node(00) at (0,0) {$|fc|^\ecat{D}$};
      \node(01) at (2.5,0) {$|fc'|^\ecat{D}$};
      \node(02) at (5,0) {$|f'c'|^\ecat{D}$};
      \node(10) at (0,-1.5) {$|fc|^\ecat{D}$};
      \node(11) at (5,-1.5) {$|f'c'|^\ecat{D}$}; 
      \draw [pto] (20) to node[auto, labelsize] {$\ecat{C}(c,c')$} (21);
      \draw [pto] (00) to node[auto, swap,labelsize] {$\ecat{D}(fc,fc')$} (01);
      \draw [pto] (01) to node[auto, swap,labelsize] {$\ecat{D}(fc',f'c')$} (02);
      \draw [pto] (10) to node[auto, swap, labelsize] {$\ecat{D}(fc,f'c')$} (11);
      \draw [->] (20) to node[auto,swap,labelsize] {${f}_c$} (00); 
      \draw [->] (21) to node[auto,swap,labelsize] {${f}_{c'}$} (01);
      \draw [->] (21) to node[auto, labelsize] {${f}'_{c'}$} (02); 
      \draw [double equal sign distance] (00) to (10); 
      \draw [double equal sign distance] (02) to (11);
      \node at (3.75,0.5) {$\alpha_{c'}$};
      \node at (1.875,0.75) {$\omega^f_{c,c'}$};
      \node at (2.5,-0.75) {$\mu^\ecat{D}_{fc,fc',f'c'}$}; 
\end{tikzpicture}
\end{equation*}
    holds for each $c,c'\in\ecat{C}$.
\end{definition}

Small $\vdcat{A}$-categories, $\vdcat{A}$-functors, and $\vdcat{A}$-natural transformations form a 2-category $\enCat{\vdcat{A}}$. The assignment $\vdcat{A}\mapsto \enCat{\vdcat{A}}$ routinely extends to a 2-functor $\Enr\colon \VDbl\to \twoCAT$. 

In particular, if we consider the case where $\vdcat{A}$ is the terminal virtual double category $1$, then $\Enr(1)$ (i.e., $\enCat{1}$; however, we will avoid the latter misleading notation) is the locally chaotic\footnote{A category is \emph{chaotic} (also known as indiscrete or codiscrete) if each of its hom-sets is a singleton. A $2$-category is \emph{locally chaotic} if each of its hom-categories is chaotic.} $2$-category whose underlying category is the category $\Set$ of small sets. 
For this reason, we write the $2$-category $\Enr(1)$ as $\Setlc$. 

\section{Explicit description of \texorpdfstring{$\vdcat{L}$}{L}}
\label{sec:L-explicitly}
In this section, we describe the left $2$-adjoint $\vdcat{L}$ of the $2$-functor $\Enr_1\colon \VDbl\to \twoCAT/\Enr(1)=\twoCAT/\Setlc$ explicitly.
First observe that for any $\vdcat{A}\in\VDbl$, the object $\Enr_1(\vdcat{A})\in \twoCAT/\Setlc$ is given by the $2$-functor $\ob\colon \enCat{\vdcat{A}}\to \Setlc$, sending each $\vdcat{A}$-category $\ecat{C}$ to its set $\ob(\ecat{C})$ of objects. 

Suppose that we are given $(F\colon \tcat{K}\to\Setlc)\in \twoCAT/\Setlc$. 
(In view of the (2-)adjunction
\[\begin{tikzpicture}[baseline=-\the\dimexpr\fontdimen22\textfont2\relax ]
      \node(0) at (0,0) {$\twoCAT$};
      \node(2) at (3,0) {$\CAT$,};
      \draw [->, transform canvas={yshift=5}] (0) to node[auto, labelsize] {$(-)_0$} (2); 
      \draw [<-, transform canvas={yshift=-5}] (0) to node[auto, swap, labelsize] {$(-)_\mathrm{lc}$} (2); 
      \path(0) to node[rotate=-90] {$\dashv$} (2);
\end{tikzpicture}\]
the $2$-functor $F\colon \tcat{K}\to \Setlc$ corresponds to a functor $\hat F\colon \tcat{K}_0\to \Set$ from the underlying category $\tcat{K}_0$ of $\tcat{K}$.)
We construct the virtual double category $\vdcat{L}F$ as follows.
\begin{itemize}
    \item An object of $\vdcat{L}F$ is a pair $(K,x)$ consisting of an object $K\in\tcat{K}$ and an element $x\in FK$.
    \item A vertical morphism of $\vdcat{L}F$ from $(K,x)$ to $(L,y)$ is a $1$-cell $f\colon K\to L$ of $\tcat{K}$ such that $(Ff)x=y$.
    (Thus the vertical category $(\vdcat{L}F)_\vc$ of $\vdcat{L}F$ is the category of elements $\Elts(\hat F)$ of the functor $\hat F\colon \tcat{K}_0\to \Set$ corresponding to the $2$-functor $F\colon \tcat{K}\to \Setlc$.)
    \item There exists at most one horizontal morphism between any given pair of objects of $\vdcat{L}F$;
    a horizontal morphism from $(K,x)$ to $(K',x')$ exists in $\vdcat{L}F$ if and only if $K=K'$.
    \item Given a configuration 
    \[\begin{tikzpicture}[baseline=-\the\dimexpr\fontdimen22\textfont2\relax ]
      \node(00) at (-0.5,0.75) {$(K,x_0)$};
      \node(01) at (1.5,0.75) {$(K,x_1)$};
      \node(02) at (3,0.75) {$\cdots$};
      \node(03) at (4.5,0.75) {$(K,x_n)$};
      \node(10) at (-0.5,-0.75) {$(L,y)$};
      \node(11) at (4.5,-0.75) {$(L,y')$}; 
      \draw [pto] (00) to node[auto, labelsize] {} (01);  
      \draw [pto] (01) to node[auto, labelsize] {} (02);  
      \draw [pto] (02) to node[auto, labelsize] {} (03);  
      \draw [pto] (10) to node[auto, swap, labelsize] {} (11); 
      \draw [->] (00) to node[auto, swap, labelsize] {$f$} (10); 
      \draw [->] (03) to node[auto,labelsize] {$f'$} (11);
    \end{tikzpicture}\]
    in $\vdcat{L}F$ (where the horizontal morphisms are left unnamed because they are determined by their domain and codomain), the multicells having this as a boundary are in bijection with the $2$-cells $f\to f'$ in $\tcat{K}$. 
\end{itemize}
Using suitable pasting of $2$-cells in $\tcat{K}$, it is straightforward to make $\vdcat{L}F$ into a virtual double category.

\begin{theorem}\label{thm:L}
This construction gives rise to the left $2$-adjoint of $\Enr_1$.  
\end{theorem}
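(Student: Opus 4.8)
The plan is to establish an isomorphism of categories
\[
\VDbl(\vdcat{L}F,\vdcat{A})\;\cong\;(\twoCAT/\Setlc)(F,\Enr_1\vdcat{A}),
\]
$2$-natural in $F=(F\colon\tcat{K}\to\Setlc)$ and in $\vdcat{A}\in\VDbl$. Since $\Enr_1\vdcat{A}$ is the $2$-functor $\ob\colon\enCat{\vdcat{A}}\to\Setlc$, an object of the right-hand category is a $2$-functor $H\colon\tcat{K}\to\enCat{\vdcat{A}}$ with $\ob\circ H=F$, and — because $\Setlc$ is locally chaotic, so that the condition $\ob\ast\tau=\id_F$ is automatic — a morphism is a $2$-natural transformation all of whose components are identity-on-objects $\vdcat{A}$-functors. (One could instead produce the unit and counit and check the triangle identities; the unit $F\to\Enr_1(\vdcat{L}F)$ sends $K$ to the $\vdcat{L}F$-category with object set $FK$, extents $x\mapsto(K,x)$, and hom-objects the unique horizontal morphisms. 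But the hom-isomorphism seems the most economical route.)

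First I would describe the passage $H\mapsto G$. Set $G(K,x)=|x|^{HK}$; send a vertical morphism $f\colon(K,x)\to(L,y)$ of $\vdcat{L}F$ (that is, $f\colon K\to L$ in $\tcat{K}$ with $(Ff)x=y$) to $(Hf)_x$; and send the unique horizontal morphism $(K,x)\pto(K,x')$ to $HK(x,x')$. A multicell of $\vdcat{L}F$ with top objects $(K,x_0),\dots,(K,x_n)$, bottom the unique horizontal $(L,y)\pto(L,y')$, and vertical sides $f,f'$ is the same thing as a $2$-cell $\sigma\colon f\to f'$ of $\tcat{K}$; I would send it to the pasting in $\vdcat{A}$ of the iterated composition multicells $\mu^{HK}$ applied to $HK(x_0,x_1),\dots,HK(x_{n-1},x_n)$ (read as $\eta^{HK}_{x_0}$ when $n=0$), then the cell $\omega^{Hf'}_{x_0,x_n}$, the component $(H\sigma)_{x_0}$, and the composition multicell $\mu^{HL}$. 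Functoriality of $G$ on vertical morphisms is immediate from the composition law of $\vdcat{A}$-functors; $G(\id_\Gamma)=\id_{G\Gamma}$ follows from the unit axiom for the $\vdcat{A}$-category $HK$; and compatibility with composite multicells follows from the associativity and unit axioms for $\vdcat{A}$-categories together with the $\vdcat{A}$-naturality axiom for the relevant $H\sigma$ (which also shows $G\sigma$ is unchanged if one uses $\omega^{Hf}_{x_0,x_n}$ and $(H\sigma)_{x_n}$ instead of $\omega^{Hf'}_{x_0,x_n}$ and $(H\sigma)_{x_0}$). Since $\ob(Hf)=Ff$, the vertical part of $G$ is the canonical functor $\Elts(\hat F)\to\vdcat{A}_\vc$, so $G$ indeed lies over the structure maps.

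Conversely, given $G$ I would let $HK$ be the $\vdcat{A}$-category with $\ob(HK)=FK$, extents $|x|^{HK}=G(K,x)$, hom-objects $G$ of the unique horizontal morphisms, and units and compositions obtained by applying $G$ to the multicells of $\vdcat{L}F$ that correspond to the identity $2$-cell of $\id_K$; I would define $Hf$ and $H\sigma$ in the same style, using the identity $2$-cell of $f$ and the $2$-cell $\sigma$ itself. Each $\vdcat{A}$-category, $\vdcat{A}$-functor and $\vdcat{A}$-naturality axiom, and $2$-functoriality of $H$, is then $G$ applied to an equation between pastings of such multicells in $\vdcat{L}F$, and each such equation holds in $\vdcat{L}F$ simply because horizontal and vertical pasting of $2$-cells in $\tcat{K}$ preserves identities, so both sides describe the same identity $2$-cell and hence coincide. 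The two passages are visibly mutually inverse on objects, vertical morphisms and horizontal morphisms, and on multicells one last use of the unit and associativity axioms for $\vdcat{A}$-categories shows that the pasting formula defining $G\sigma$ returns the multicell one began with. The extension to vertical transformations is parallel: a vertical transformation $G\to G'$ is exactly the data assembling, over each $K$, into an identity-on-objects $\vdcat{A}$-functor $HK\to H'K$ that is $2$-natural in $K$; and the required $2$-naturality of the bijection in $F$ and in $\vdcat{A}$ is a direct check, since every formula above was given uniformly.

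The one step that is not pure bookkeeping, and where I expect essentially all of the work to lie, is checking that the pasting formula defining $G$ on multicells (in the direction $H\mapsto G$) respects the composition of multicells in $\vdcat{L}F$. This amounts to unwinding how a composite multicell of $\vdcat{L}F$ encodes a pasted $2$-cell of $\tcat{K}$ together with the horizontal-chaotic structure, and matching this against an interchange of the multicells $\mu^{HK}$, $\mu^{HL}$, $\omega^{Hf}$, $\omega^{Hf'}$ and $(H\sigma)_\bullet$ in $\vdcat{A}$ — with the $\vdcat{A}$-naturality axiom doing the essential work of identifying the two sides.
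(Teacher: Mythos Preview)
Your argument is correct, and it is precisely the direct verification that the paper sets up but then deliberately omits. The paper's own proof constructs the unit $\eta_F\colon\tcat{K}\to\enCat{\vdcat{L}F}$ (your ``conversely'' construction applied to the identity on $\vdcat{L}F$), states that it suffices to show the induced composite
\[
\VDbl(\vdcat{L}F,\vdcat{A})\longrightarrow(\twoCAT/\Setlc)\bigl(F,\Enr_1(\vdcat{A})\bigr)
\]
is an isomorphism of categories, and then explicitly declines to do so, pointing instead to the structural decomposition $\Enr\cong\V\circ\MMod\circ\MMat$ developed in later sections (each factor being a right $2$-adjoint or polynomial, hence $\Enr$ is a parametric right $2$-adjoint and the left adjoint of $\Enr_1$ exists). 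So you have carried out the elementary hands-on proof that the paper says would work; the paper trades that calculation for a more conceptual argument that also yields the stronger conclusion that $\Enr$ is familial. Your approach is self-contained and avoids all of the machinery of \cref{sec:poly-pra-fam,sec:dof,sec:MMat,sec:units}; the paper's approach explains \emph{why} the adjoint exists and situates it among the other constructions.
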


\begin{proof}
    For any $(F\colon \tcat{K}\to\Setlc)\in \twoCAT/\Setlc$, 
there is a canonical $2$-functor $\eta_F\colon \tcat{K}\to \Enr(\vdcat{L}F)$ making the diagram
\begin{equation*}
\begin{tikzpicture}[baseline=-\the\dimexpr\fontdimen22\textfont2\relax ]
      \node(00) at (0,0.75) {$\tcat{K}$};
      \node(01) at (2,0.75) {$\enCat{\vdcat{L}F}$};
      \node(10) at (1,-0.75) {$\Setlc$}; 
      \draw [->] (00) to node[auto, labelsize] {$\eta_F$} (01);  
      \draw [->] (00) to node[auto, swap, labelsize] {$F$} (10); 
      \draw [->] (01) to node[auto, labelsize] {$\ob$} (10);
\end{tikzpicture}
\end{equation*}
commute. 
On objects, $\eta_F$ sends each $K\in\tcat{K}$ to a canonical $\vdcat{L}F$-category $\eta_F(K)$ with $\ob\bigl(\eta_F(K)\bigr)=FK$, such that the extent of $x\in FK$ is $(K,x)\in \vdcat{L}F$;
its action on $1$- and $2$-cells of $\tcat{K}$ is evident. 
It suffices to show that for any $\vdcat{A}\in\VDbl$, the composite 
\begin{multline*}
\VDbl(\vdcat{L}F,\vdcat{A})\xrightarrow{(\Enr_1)_{\vdcat{L}F,\vdcat{A}}}
\twoCAT/\Setlc\bigl(\Enr_1(\vdcat{L}F),\Enr_1(\vdcat{A}) \bigr)\\
\xrightarrow{\twoCAT/\Setlc(\eta_F,\Enr_1(\vdcat{A}))}
\twoCAT/\Setlc\bigl(F,\Enr_1(\vdcat{A}) \bigr)
\end{multline*}
is an isomorphism of categories.
We omit the details as one can also obtain the $2$-adjunction $\vdcat{L}\dashv \Enr_1$ by combining the results proved in what follows.
\end{proof}

\begin{remark}
    The $2$-category $\VDbl$ admits a fully faithful $2$-functor $\PsDbl\to \VDbl$ from the $2$-category $\PsDbl$ of (large) pseudo double categories, lax double functors, and vertical natural transformations.
    The essential image of $\PsDbl\to \VDbl$ consists of all \emph{representable} virtual double categories; see e.g.\ \cite[Theorem~5.2]{Cruttwell-Shulman-unified}.
    Now, in the virtual double category $\vdcat{L}F$ constructed above, any multicell corresponding to the identity $2$-cell on an identity $1$-cell in $\tcat{K}$ is opcartesian (in the sense of \cite[Definition~5.1]{Cruttwell-Shulman-unified}).
    It follows that $\vdcat{L}F$ is representable.
    Hence the $2$-functor $\vdcat{L}\colon \twoCAT/\Setlc\to \VDbl$ factors through $\PsDbl$ (in fact, its full sub-2-category $\StrDbl$ consisting of \emph{strict} double categories).
    Therefore it follows that the $2$-functor $\Enr\colon \PsDbl\to \twoCAT$ is also a parametric right $2$-adjoint. 
    For examples of categories enriched over a pseudo double category, see e.g.\ \cite{Cockett-Garner-restriction} and \cite[Definition~5.1]{Shulman-enriched-indexed}.
\end{remark}

\begin{remark}\label{rmk:BICAT-not-pra}
    The $2$-functor $\Enr\colon \BICAT\to \twoCAT$ is not a parametric right $2$-adjoint, i.e., the $2$-functor $\Enr_1\colon \BICAT\to \twoCAT/\Setlc$ does not have a left $2$-adjoint. 
    (See e.g.\ \cite{Street-cohomology} for the definitions of categories, functors, and natural transformations enriched over a bicategory; these are a special case of \cref{subsec:enrichment-over-vdbl}, where a bicategory is regarded as a virtual double category whose vertical category is discrete, and whose horizontal morphisms correspond to the $1$-cells of the original bicategory.)
    In \cite[Theorem~2.1]{Fujii_Lack}, we showed that $\Enr_1\colon \BICAT\to \twoCAT/\Setlc$ preserves all (weighted) limits which happen to exist in $\BICAT$. 
    Thus we cannot use arguments based on non-preservation of limits. Instead, we argue as follows.

Let $\cat{C}$ be the category with objects $x,y,z$ and non-identity
arrows $f\colon x\to z$ and $g\colon y\to z$. Regard it as a locally
discrete 2-category, and equip it with the 2-functor
$\Delta_1\colon\cat{C}\to\Setlc$ which is constant at the singleton
$1\in\Setlc$. This now determines an object $(\cat{C},\Delta_1)$ of $\twoCAT/\Setlc$. 
We claim that the $2$-functor 
\begin{equation}\label{eqn:Bicat-non-representable-functor}
\BICAT\xrightarrow{\Enr_1} \twoCAT/\Setlc\xrightarrow{\twoCAT/\Setlc((\cat{C},\Delta_1),-)}\CAT
\end{equation}
is not representable.
For
a bicategory $\bcat{B}$, to give a map
$(\cat{C},\Delta_1)\to\Enr_1(\bcat{B})$ in $\twoCAT/\Setlc$ we should give three one-object
$\bcat{B}$-categories --- in other words, three monads in $\bcat{B}$
--- and two $\bcat{B}$-functors between them. But
$\bcat{B}$-functors respect extent, and so the three one-object
$\bcat{B}$-categories would all have to live on the same object of
$\bcat{B}$.
Now, suppose to the contrary that the $2$-functor \eqref{eqn:Bicat-non-representable-functor} were representable by $\bcat{B}\in\BICAT$.

\subsection*{Step 1: the bicategory $\bcat{B}$ has a single object.}

We have a $2$-adjunction 
\[\begin{tikzpicture}[baseline=-\the\dimexpr\fontdimen22\textfont2\relax ]
      \node(0) at (0,0) {$\mathbf{SET}_\mathrm{ld}$};
      \node(2) at (3,0) {$\BICAT$,};
      \draw [<-, transform canvas={yshift=5}] (0) to node[auto, labelsize] {$\ob$} (2); 
      \draw [->, transform canvas={yshift=-5}] (0) to node[auto, swap, labelsize] {$(-)_\mathrm{ch}$} (2); 
      \path(0) to node[rotate=-90] {$\dashv$} (2);
\end{tikzpicture}\]
where $\mathbf{SET}_\mathrm{ld}$ is the locally discrete $2$-category whose underlying category is the category $\mathbf{SET}$ of large sets, $\ob$ maps each bicategory to its set of objects, and $(-)_\mathrm{ch}$ maps each set $E$ to the \emph{chaotic} bicategory $E_\mathrm{ch}$ whose set of objects is $E$ and each of whose hom-categories is the terminal category. 
Our assumption on $\bcat{B}$ implies that we have isomorphisms of categories 
\[
\mathbf{SET}_\mathrm{ld}\bigl(\ob(\bcat{B}),E\bigr)\cong 
\BICAT(\bcat{B},E_\mathrm{ch})\cong
\twoCAT/\Setlc\bigl((\cat{C},\Delta_1),\Enr_1(E_\mathrm{ch})\bigr)
\]
for each $E\in\mathbf{SET}_\mathrm{ld}$.
Since $\cat{C}$ is connected, the rightmost term is the discrete category whose set of objects is $E$. Hence $\ob(\bcat{B})$ is a singleton.

Therefore $\bcat{B}$ corresponds to a monoidal category $\cat{V}$.
We thus have a universal monoidal category $\cat{V}$ equipped with
a functor $N\colon\cat{C}\to \mathrm{Mon}(\cat{V})$.

\subsection*{Step 2: the unit $I$ of $\cat{V}$ must be initial.}
Next consider $I/\cat{V}$ with the lifted monoidal structure from
$\cat{V}$. 
The (strict monoidal) forgetful functor $U\colon I/\cat{V}\to \cat{V}$ induces an isomorphism of categories $\mathrm{Mon}(U)\colon \mathrm{Mon}(I/\cat{V})\to\mathrm{Mon}(\cat{V})$. 
Consider the composite $\cat{C}\xrightarrow{N}\mathrm{Mon}(\cat{V})\xrightarrow{\mathrm{Mon}(U)^{-1}}\mathrm{Mon}(I/\cat{V})$. By the universal property of $N$, there exists a unique lax monoidal functor $H\colon \cat{V}\to I/\cat{V}$ such that $\mathrm{Mon}(U)^{-1}N=\mathrm{Mon}(H)N$, i.e., $N=\mathrm{Mon}(UH)N$. 
The uniqueness part of the universal property of $N$ implies $UH=1_\cat{V}$. 
Therefore we have $\mathrm{Mon}(U)\mathrm{Mon}(H)=1_{\mathrm{Mon}(\cat{V})}$, and since $\mathrm{Mon}(U)$ is an isomorphism, $\mathrm{Mon}(U)^{-1}=\mathrm{Mon}(H)$ holds. 
Since $\mathrm{Mon}(U)$ maps the unit monoid on $(1_I\colon I\to I) \in I/\cat{V}$ to the unit monoid on $I\in\cat{V}$, the functor $H$ must map the object $I\in\cat{V}$ to $(1_I\colon I\to I)$. 
Now, $UH=1_\cat{V}$ implies that $H$ corresponds to a cone $(\tau_X\colon I\to X)_{X\in\cat{V}}$ over the identity functor $1_\cat{V}$, and $HI=1_I$ means that we have $\tau_I=1_I$. 
Hence $I$ is initial in $\cat{V}$. 

\subsection*{Step 3: the unit monoid $I$ must be in the image of $N$.}

Now let $\cat{V}'$ be the monoidal category obtained from $\cat{V}$ by
adjoining a new object $I'$ isomorphic to $I$. The inclusion
$K\colon\cat{V}\to\cat{V}'$ is lax monoidal, as is the functor $K'\colon
\cat{V}\to\cat{V}'$ which is identical to $K$ except that
$K'I=I'$. Since $K\neq K'$ we must have $\mathrm{Mon}(K)N\neq
\mathrm{Mon}(K')N$. But $K$ and $K'$ are identical except at $I$, and since $I$ is initial in $\cat{V}'$,
this can only happen if $I$ is in the image of $N$. 

\subsection*{Step 4: this is impossible.}

We now know that $NW=I$ for some object $W\in\cat{C}$.
Finally consider the cocartesian monoidal category
$\Fam(\cat{C})$ and the fully faithful inclusion
$J\colon\cat{C}\to\Fam(\cat{C})=\mathrm{Mon}(\Fam(\cat{C}))$. There would then
have to be a unique monoidal functor $L\colon\cat{V}\to\Fam(\cat{C})$
such that $\mathrm{Mon}(L)N=J$. For any object $C\in\cat{C}$ we have a
map $I=NW\to NC$ in $\mathrm{Mon}(\cat{V})$, and so a map
$JW=\mathrm{Mon}(L)NW\to \mathrm{Mon}(L)NC=JC$; and so, since $J$ is
full, a map $W\to C$ in $\cat{C}$. But there is no object in
$\cat{C}$ with a map to every other object. 
\end{remark}

\begin{remark}\label{rmk:MonCAT-not-pra}
    The $2$-functor $\Enr\colon \MonCAT\to \twoCAT$ is not a parametric right $2$-adjoint.
    In this case, we can show that the $2$-functor $\Enr_1\colon \MonCAT\to \twoCAT/\Setlc$ does not preserve a certain (fortuitous) equalizer in $\MonCAT$.
    Consider the parallel pair of morphisms 
\[
\begin{tikzpicture}[baseline=-\the\dimexpr\fontdimen22\textfont2\relax ]
      \node(0) at (0,0) {$\mathbb{Z}$};
      \node(1) at (3,0) {$\{a,b\}_\mathrm{ch}$};
      \draw [->, transform canvas={yshift=3}] (0) to node[auto, labelsize] {$f$}  (1);  
      \draw [->, transform canvas={yshift=-3}] (0) to node[auto, swap, labelsize] {$g$}  (1); 
\end{tikzpicture}
\]
    in $\MonCAT$, where 
    \begin{itemize}
        \item $\mathbb{Z}$ is the additive monoid (in fact group) of integers regarded as a discrete monoidal category,
        \item $\{a,b\}_\mathrm{ch}$ is the chaotic category with two objects $a,b$ regarded as a monoidal category (details of the definitions of the unit and monoidal product do not matter),
        \item $f$ is the strong monoidal functor constant at $a$, and
        \item $g$ is the strong monoidal functor sending $-1,0,1\in\mathbb{Z}$ to $a$ and all other objects of $\mathbb{Z}$ to $b$. 
    \end{itemize}
    Then this parallel pair has an equalizer in $\MonCAT$, namely the terminal monoidal category $1$; the limit cone is the unique lax (in fact strict) monoidal functor $e\colon 1\to \mathbb{Z}$, sending the unique object of $1$ to $0\in\mathbb{Z}$. 

    Now, an object of $\Enr(\mathbb{Z})=\enCat{\mathbb{Z}}$ is a set $X$ together with a function $d\colon X\times X\to \mathbb{Z}$ satisfying $d(x,x)=0$ and $d(x,y)+d(y,z)=d(x,z)$. 
    Such an object is in the image of the $2$-functor $\Enr(e)\colon \Enr(1)\to \Enr(\mathbb{Z})$ if and only if $d\colon X\times X\to \mathbb{Z}$ is constant at $0$, whereas it is in the equalizer of $\Enr(f)$ and $\Enr(g)$ if and only if the image of $d\colon X\times X\to \mathbb{Z}$ is contained in $\{-1,0,1\}$. Now, let $X=\{x,y\}$, $d(x,x)=d(y,y)=0$, $d(x,y)=1$, and $d(y,x)=-1$; this object is in the equalizer of $\Enr(f)$ and $\Enr(g)$, but not in the image of $\Enr(e)$.
\end{remark}

\section{Parametric right 2-adjoints, familial 2-functors, and polynomial 2-functors}\label{sec:poly-pra-fam}
In this section, we recall basic facts about these notions.

\subsection{Parametric right $2$-adjoints}\label{subsec:pra}
We start with the notion of \emph{parametric right $2$-adjoint}
\cite{Weber-familial}, which is a $2$-categorical version of the notion of \emph{parametric right adjoint} introduced in \cite[Section~5]{Street-petit-topos}. 
As mentioned in \cref{sec:intro}, a $2$-functor $R\colon \tcat{K}\to\tcat{L}$ from a $2$-category $\tcat{K}$ with a terminal object $1$ is a \emph{parametric right $2$-adjoint} when the $2$-functor $R_1\colon \tcat{K}\to \tcat{L}/R1$ is a right $2$-adjoint.

\begin{proposition}
    For a $2$-functor $R\colon\tcat{K}\to\tcat{L}$ where $\tcat{K}$ has a terminal object, the following are equivalent:
    \begin{enumerate}
    \item $R$ is a parametric right $2$-adjoint
        \item $R_1\colon\tcat{K}\to\tcat{L}/R1$ has a left $2$-adjoint
        \item $R_X\colon\tcat{K}/X\to\tcat{L}/RX$ has a left $2$-adjoint for every object $X$ of $\tcat{K}$.
    \end{enumerate}
\end{proposition}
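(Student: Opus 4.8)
The plan is to prove the cycle of implications $(3)\Rightarrow(2)\Rightarrow(1)\Rightarrow(3)$, noting that $(1)$ and $(2)$ are definitionally equivalent (the definition of parametric right $2$-adjoint is precisely that $R_1$ has a left $2$-adjoint), so the real content is the equivalence with $(3)$. The implication $(3)\Rightarrow(2)$ is immediate by taking $X=1$, since $\tcat{K}/1\simeq\tcat{K}$ and $R_1$ is then $R_X$ up to this equivalence.

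For the implication $(1)\Rightarrow(3)$ I would argue as follows. Suppose $R_1\colon\tcat{K}\to\tcat{L}/R1$ has a left $2$-adjoint, and fix an object $X$ of $\tcat{K}$. First I would recall the standard $2$-categorical slicing fact that for any $2$-functor $R\colon\tcat{K}\to\tcat{L}$ and any object $X$, there is a canonical comparison $2$-functor $R_X\colon\tcat{K}/X\to\tcat{L}/RX$, and that slicing over $R1$ can be performed in stages: $(\tcat{L}/R1)/(R!_X\colon RX\to R1)\simeq \tcat{L}/RX$, and similarly $(\tcat{K}/1)/(X\to 1)\simeq\tcat{K}/X$. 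Under these equivalences, $R_X$ is identified with the $2$-functor induced by $R_1$ on slices over the object $R_1(X)=(R!_X\colon RX\to R1)$ of $\tcat{L}/R1$. Then I would invoke the general principle that a right $2$-adjoint $G\colon\tcat{M}\to\tcat{N}$ induces, for every object $M$ of $\tcat{M}$, a right $2$-adjoint $G/M\colon\tcat{M}/M\to\tcat{N}/GM$: the left $2$-adjoint sends $(n\colon N\to GM)$ to the domain of the (opcartesian lift, i.e.) pullback-free construction $F n$ composed with the counit $\varepsilon_M\colon FGM\to M$, namely $N\xrightarrow{Fn}FGM\xrightarrow{\varepsilon_M}M$. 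This is a routine but standard verification using the universal property of the $2$-adjunction $F\dashv G$; it works $2$-categorically because the relevant $2$-dimensional universal properties of slice $2$-categories are strict. Applying this with $G=R_1$ and $M=R_1(X)$ yields that $R_X$ has a left $2$-adjoint.

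The $(3)\Rightarrow(2)$ and $(2)\Leftrightarrow(1)$ directions are formal, so the substantive step — and the main place to be careful — is the identification in $(1)\Rightarrow(3)$ of $R_X$ with the slice of $R_1$ over $R_1(X)$, together with the lemma that right $2$-adjoints pass to slices. The potential obstacle is purely bookkeeping: one must check that the canonical equivalences $\tcat{L}/RX\simeq(\tcat{L}/R1)/R_1(X)$ are compatible with the comparison functors $R_X$ and $R_1$, i.e.\ that the square
\[
\begin{tikzpicture}[baseline=-\the\dimexpr\fontdimen22\textfont2\relax ]
      \node(00) at (0,0.75) {$\tcat{K}/X$};
      \node(01) at (4,0.75) {$(\tcat{K}/1)/(X\to 1)$};
      \node(10) at (0,-0.75) {$\tcat{L}/RX$};
      \node(11) at (4,-0.75) {$(\tcat{L}/R1)/R_1(X)$};
      \draw [->] (00) to node[auto, labelsize] {$\simeq$} (01);
      \draw [->] (10) to node[auto, swap, labelsize] {$\simeq$} (11);
      \draw [->] (00) to node[auto, swap, labelsize] {$R_X$} (10);
      \draw [->] (01) to node[auto, labelsize] {$(R_1)/R_1(X)$} (11);
\end{tikzpicture}
\]
commutes up to (invertible, indeed identity) $2$-natural isomorphism. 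This is straightforward because all the functors involved act as the identity on the ``forgetful'' legs and only rearrange the data, but it is the one place where writing things out carefully matters; everything else then follows from the fact that being a right $2$-adjoint is preserved by pre- and post-composition with $2$-equivalences.
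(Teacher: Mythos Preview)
Your proof is correct. The equivalence $(1)\Leftrightarrow(2)$ is indeed definitional, $(3)\Rightarrow(2)$ follows by taking $X=1$, and your argument for $(2)\Rightarrow(3)$ via iterated slicing and the fact that right $2$-adjoints induce right $2$-adjoints on slices is the standard one.

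The paper itself does not give a proof of the substantive implication: it simply observes that $(1)\Leftrightarrow(2)$ is the definition and then cites \cite[Proposition~2.6]{Weber-familial} for $(2)\Leftrightarrow(3)$. So you have supplied the argument that the paper outsources. Your approach is what one would expect that cited proof to contain; the iterated-slice identification $(\tcat{L}/R1)/R_1(X)\simeq\tcat{L}/RX$ together with the lemma that a right $2$-adjoint $G$ induces a right $2$-adjoint $G/M$ on each slice (with left adjoint $(n\colon N\to GM)\mapsto(\varepsilon_M\circ Fn\colon FN\to M)$) is exactly the right mechanism. The bookkeeping square you flag as the one delicate point does commute on the nose, since both routes send $(A\to X)$ to the object $(RA\to RX)$ viewed either directly in $\tcat{L}/RX$ or as an object over $R!_X$ in $\tcat{L}/R1$, and the horizontal equivalences are literally the ``forget the terminal leg'' identifications.
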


\begin{proof}
    The equivalence of (1) and (2) is just the definition of parametric right $2$-adjoint. The equivalence of (2) and (3) is part of \cite[Proposition~2.6]{Weber-familial}.
\end{proof}

A straightforward consequence is the following.
\begin{proposition}
    Parametric right $2$-adjoints are closed under composition.
\end{proposition}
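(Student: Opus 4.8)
The plan is to show that if $R\colon \tcat{K}\to\tcat{L}$ and $S\colon \tcat{L}\to\tcat{M}$ are parametric right $2$-adjoints, then so is their composite $SR\colon \tcat{K}\to\tcat{M}$. By the previous proposition, it suffices to exhibit a left $2$-adjoint to $(SR)_1\colon \tcat{K}\to \tcat{M}/SR1$, or equivalently to verify that $(SR)_1$ is a right $2$-adjoint. Note that $\tcat{K}$ has a terminal object $1$ by hypothesis (that is part of what it means for $R$ to be a parametric right $2$-adjoint), so the statement makes sense.

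The key observation is that $(SR)_1$ factors, up to the canonical identification, through the slices over the intermediate object $R1$. First, $R_1\colon \tcat{K}\to \tcat{L}/R1$ is a right $2$-adjoint by hypothesis. Next, consider the object $R1\in\tcat{L}$: since $S$ is a parametric right $2$-adjoint, by item (3) of the previous proposition the $2$-functor $S_{R1}\colon \tcat{L}/R1\to \tcat{M}/S(R1)$ has a left $2$-adjoint, hence is a right $2$-adjoint. Finally, $\tcat{M}/S(R1)=\tcat{M}/SR1$ on the nose. Composing, $S_{R1}\circ R_1\colon \tcat{K}\to \tcat{M}/SR1$ is a composite of right $2$-adjoints, hence a right $2$-adjoint. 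So it remains only to check that this composite \emph{is} $(SR)_1$, i.e.\ that for each $K\in\tcat{K}$ we have $S_{R1}(R_1 K) = (SR)_1 K$ as objects of $\tcat{M}/SR1$, and similarly on $1$- and $2$-cells. Unwinding the definitions: $R_1 K$ is the object $(R!_K\colon RK\to R1)$ of $\tcat{L}/R1$; applying $S_{R1}$ (which sends a morphism $p\colon X\to R1$ to $Sp\colon SX\to SR1$) yields $(SR!_K\colon SRK\to SR1)$, which is exactly $(SR)_1 K$ by definition, using $S!_K = S!_{RK}\circ\dots$ — more precisely using $(SR)(!_K) = S(R!_K)$ and the fact that $!_K\colon K\to 1$ in $\tcat{K}$ induces $R!_K$ and then $SR!_K$. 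The action on $1$- and $2$-cells matches by the same bookkeeping.

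I would write this up compactly: recall that the composite of right $2$-adjoints is a right $2$-adjoint, invoke the previous proposition in the form ``$R_X$ has a left $2$-adjoint for every $X$'' applied to $S$ at $X=R1$, and then observe the identity $(SR)_1 = S_{R1}\circ R_1$ of $2$-functors $\tcat{K}\to\tcat{M}/SR1$. The only mildly delicate point — and the one I expect to be the main (though minor) obstacle — is the verification of that last identity, which is a matter of carefully matching the two descriptions of the comparison morphism into the terminal object's image; it is entirely formal and follows from functoriality of $S$ together with uniqueness of maps into a terminal object, but it is the step one must not wave away. Everything else is immediate from the cited results.
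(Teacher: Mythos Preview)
Your proof is correct and takes essentially the same approach as the paper, which simply says ``Use the characterization (3) in the previous proposition.'' You have spelled out exactly what that means: factor $(SR)_1$ as $S_{R1}\circ R_1$, with $R_1$ a right $2$-adjoint by definition and $S_{R1}$ a right $2$-adjoint by characterization~(3) applied to $S$ at $X=R1$.
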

\begin{proof}
    Use the characterization (3) in the previous proposition.
\end{proof}
\begin{proposition}\label{2-adj-is-para-2-adj}
    If $\tcat{K}$ is a $2$-category with terminal object, any right $2$-adjoint $R\colon \tcat{K}\to \tcat{L}$ is a parametric right $2$-adjoint. 
\end{proposition}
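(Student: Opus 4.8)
The plan is to use the fact that a right $2$-adjoint preserves terminal objects, so that the slice appearing in the definition of parametric right $2$-adjoint is, up to isomorphism, no slice at all. Fix a $2$-adjunction $L\dashv R$ with counit $\varepsilon$, and let $1$ denote the terminal object of $\tcat{K}$, so that $\tcat{K}(K,1)$ is the terminal category for every $K\in\tcat{K}$.

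First I would show that $R1$ is terminal in $\tcat{L}$: for every $Y\in\tcat{L}$ there is an isomorphism of categories $\tcat{L}(Y,R1)\cong\tcat{K}(LY,1)$, $2$-natural in $Y$, and the right-hand side is the terminal category; hence $R1$ is a $2$-terminal object of $\tcat{L}$. It follows that the forgetful $2$-functor $U\colon\tcat{L}/R1\to\tcat{L}$ is an isomorphism of $2$-categories: since $R1$ is $2$-terminal, there is a unique $1$-cell $Y\to R1$ for each $Y$, and the only $2$-cell between parallel such $1$-cells is the identity, so the defining conditions on the objects, $1$-cells and $2$-cells of $\tcat{L}/R1$ are vacuous.

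Next I would observe that $U\circ R_1=R$: by definition $R_1$ sends $K$ to $(R!_K\colon RK\to R1)$, and $U$ discards the structure map, leaving $RK$; similarly on $1$- and $2$-cells. Consequently $R_1=U^{-1}\circ R$ is a composite of right $2$-adjoints (an isomorphism of $2$-categories being in particular a right $2$-adjoint), hence is itself a right $2$-adjoint; explicitly, $L\circ U\dashv R_1$. By definition this says precisely that $R$ is a parametric right $2$-adjoint.

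There is essentially no obstacle here; the only point needing a little care is the $2$-categorical reading of ``terminal object'' used in the first step, namely that each $\tcat{K}(K,1)$ is the terminal category rather than merely connected and nonempty, but this is the meaning of ``terminal object'' for the $2$-categories $\VDbl$, $\BICAT$, $\twoCAT$ and their relatives throughout the paper. Alternatively, one can bypass the observation that the slice is trivial and verify characterization~(3) directly: for each $X\in\tcat{K}$ the $2$-functor $R_X\colon\tcat{K}/X\to\tcat{L}/RX$ has a left $2$-adjoint sending $(g\colon Y\to RX)$ to $(\varepsilon_X\circ Lg\colon LY\to X)$, the required hom-category isomorphism following from the hom-isomorphisms of $L\dashv R$ by a routine manipulation with the triangle identities and the naturality of $\varepsilon$; specialising to $X=1$ again gives the statement, and in fact proves the preceding proposition as well.
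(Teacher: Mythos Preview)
Your proof is correct and follows exactly the same idea as the paper's one-line argument ``Up to isomorphism, $R_1$ is just $R$'': you simply unpack why this is so, namely that $R$ preserves the terminal object, so $\tcat{L}/R1\cong\tcat{L}$ and $R_1$ is identified with $R$. Your alternative via characterization~(3), constructing the left $2$-adjoint of $R_X$ as $(g\colon Y\to RX)\mapsto(\varepsilon_X\circ Lg\colon LY\to X)$, is also correct and indeed recovers the previous proposition as well, but the paper does not take this longer route.
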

\begin{proof}
Up to isomorphism, $R_1$ is just $R$.
\end{proof}

A parametric right 2-adjoint $R\colon\tcat{K}\to\tcat{L}$ factorizes
as a right adjoint $R_1$ followed by the forgetful
$\tcat{L}/R1\to\tcat{L}$, and so will preserve any type of limit which
is preserved by the forgetful; in particular, it will preserve any
pullbacks and equalizers which exist in $\tcat{K}$. But it may not
preserve some 2-categorical or bicategorical limits that we would like to be
preserved. To deal with this, we now turn to the stronger notion of
familial 2-functor \cite{Weber-familial}.

\subsection{Familial $2$-functors}
\label{subsec:familial}

Familial 2-functors rely on the idea of split fibration in a
2-category, which we now recall. A morphism $p\colon E\to B$ in a
2-category $\tcat{K}$ is said to be a {\em split fibration} when
\begin{itemize}
\item for each object $X$, the induced functor
  $\tcat{K}(X,p)\colon\tcat{K}(X,E)\to\tcat{K}(X,B)$ is equipped with the
  structure of a split fibration, so that in particular for any
  $e\colon X\to E$, $b\colon X\to B $, and $\beta\colon b\to pe$ there
  is a chosen cartesian lift $\overline\beta\colon \beta^\ast e\to e$
  \item for each $f\colon Y\to X$, the induced functor
    $\tcat{K}(f,E)\colon\tcat{K}(X,E)\to\tcat{K}(Y,E)$ preserves chosen
    cartesian lifts.
\end{itemize}
In the case $\tcat{K}=\CAT$ we recover the usual notion of split fibration.
Being a split fibration is structure that a morphism may bear, not
just a property: there can be more than one choice of cartesian lifts.

Given an object $B$ of the 2-category $\tcat{K}$, there is a
2-category $\Spl(B)$ equipped with a forgetful 2-functor
$U_B\colon\Spl(B)\to\tcat{K}/B$ consisting of the split fibrations $p\colon
E\to B$, morphisms between these preserving chosen cartesian lifts,
and the 2-cells are defined as in $\tcat{K}/B$; see
\cite[Section~3]{Weber-familial}. We may also write
$\Spl_{\tcat{K}}(B)$ rather than $\Spl(B)$ if there is any doubt about
the 2-category
$\tcat{K}$ in question. 

An important special case is where each
$\tcat{K}(X,p)$ has a unique lift, i.e., is a discrete fibration of categories; then the preservation condition is
automatic, and $p$ is said to be a discrete fibration. There are analogous definitions of split opfibration and discrete opfibration. All of these notions are defined representably, being defined via the corresponding notions in $\CAT$, and one sometimes speaks of representable discrete fibration, and so on, if there might be doubt as to which notion is meant. 

In particular, in the following section we shall meet a notion of discrete opfibration for virtual double categories that is strictly stronger than the one defined representably in $\VDbl$. In this paper, we shall always use {\em representable discrete opfibration} to refer to the notion defined here, namely a morphism $p\colon E\to B$ for which each $\tcat{K}(X,p)\colon \tcat{K}(X,E)\to\tcat{K}(X,B)$ is a discrete opfibration in $\CAT$.

We are now ready to make the definition.

\begin{definition}
  Let $\tcat{K}$ be a 2-category with a terminal object. A 2-functor
  $R\colon \tcat{K}\to\tcat{L}$ is {\em familial} if it is a
  parametric right $2$-adjoint, and the $2$-functor
  $R_1\colon\tcat{K}\to\tcat{L}/R1$ factorizes as
  \[
    \begin{tikzcd}
      \tcat{K} \rar["\overline{R}_1"] & \Spl(R1)
      \rar["U_{R1}"] & \tcat{L}/R1.
    \end{tikzcd}
  \]
\end{definition}
This means that:
\begin{itemize}
\item The image $RA\to R1$ of the unique $A\to 1$ under $R$ is given
  the structure of a split fibration in $\tcat{L}$
  \item The map $Rf\colon RA\to RB$ preserves chosen cartesian lifts,
    for each $f\colon A\to B$.
\end{itemize}

It follows as in \cite{Weber-familial} that any familial 2-functor
preserves isocomma objects up to equivalence, and so
preserves bicategorical pullbacks. The key point is that the forgetful
$\Spl(R1)\to\tcat{L}$ has better limit-preservation properties than
$\tcat{L}/R1\to\tcat{L}$. It also follows that $R$ preserves
fibrations, bicategorical fibrations, iso-fibrations, and so
on. It follows from \cite[Theorem~6.6]{Weber-familial} that familial 2-functors are closed under composition.

The motivating example is the 2-functor $\Fam\colon\CAT\to\CAT$; this
and many others can be found in \cite{Weber-familial}. 
The following somewhat degenerate example will nevertheless be important in what follows.

\begin{example}\label{ex:right-adj-familial}
    If $1$ is a terminal object in $\tcat{L}$ then $\Spl_{\tcat{L}}(1)\cong\tcat{L}$. Thus any right 2-adjoint between 2-categories with terminal objects is familial.
\end{example}

We shall see various further examples in later sections of the paper.

\subsection{Polynomial $2$-functors}
\label{subsec:poly}

Let $\tcat{K}$ be a $2$-category with $2$-pullbacks. Then for any $1$-cell $p\colon X\to Y$ in $\tcat{K}$, we have a $2$-adjunction 
\[
\begin{tikzpicture}[baseline=-\the\dimexpr\fontdimen22\textfont2\relax ]
      \node(0) at (0,0) {$\tcat{K}/X$};
      \node(1) at (4,0) {$\tcat{K}/Y$};
      \draw [->, transform canvas={yshift=5}] (0) to node[auto, labelsize] {$\sum_p$}  (1);  
      \draw [->, transform canvas={yshift=-5}] (1) to node[auto, labelsize] {$p^\ast$}  (0); 
      \path(0) to node[rotate=-90] {$\dashv$} (1);
\end{tikzpicture}
\]
between the (strict) slice $2$-categories, 
where $\sum_p$ maps each $(f\colon A\to X)$ to $(pf\colon {A}\to {Y})$ and 
$p^\ast$ maps each $(g\colon {B}\to {Y})$ to $(p^\ast g\colon p^\ast {B}\to {X})$ defined as the pullback
\begin{equation*}
\begin{tikzpicture}[baseline=-\the\dimexpr\fontdimen22\textfont2\relax ]
      \node(00) at (0,0.75) {$p^\ast {B}$};
      \node(01) at (1.5,0.75) {${B}$};
      \node(10) at (0,-0.75) {${X}$};
      \node(11) at (1.5,-0.75) {${Y}$}; 
      \draw [->] (00) to node[auto, labelsize] {} (01);  
      \draw [->] (10) to node[auto, swap, labelsize] {$p$} (11); 
      \draw [->] (00) to node[auto, swap, labelsize] {$p^\ast g$} (10); 
      \draw [->] (01) to node[auto, labelsize] {$g$} (11);
      \draw (0.2,0.25) to (0.5,0.25) to (0.5,0.55);
\end{tikzpicture}
\end{equation*}
in $\tcat{K}$. (Strictly speaking, the notation $p^\ast {B}$ is not precise because it also depends on $g$. The same remark applies to the notation $\prod_P{\vdcat{A}}$ introduced in the proof of \cref{prop:disc-opfib-powerful} below.)

We say that the $1$-cell $p\colon X\to Y$ in $\tcat{K}$ is
\emph{powerful} (or \emph{exponentiable}) if the $2$-functor
$p^\ast\colon\tcat{K}/Y\to\tcat{K}/X$ has a right $2$-adjoint
$\prod_p$.

If $\tcat{K}$ is a 2-category with pullbacks and we are given
morphisms
\[
  \begin{tikzcd}
    I & E \lar["s" '] \rar["p"] & B \rar["t"] & J
  \end{tikzcd}
\]
in $\tcat{K}$ with $p$ powerful (in which case the above diagram is called a \emph{polynomial} in $\tcat{K}$), then there is an induced 2-functor
\[
  \begin{tikzcd}
    \tcat{K}/I \rar["s^{*}"] & \tcat{K}/E \rar["\prod_p"] & \tcat{K}/B \rar["\sum_t"]
    & \tcat{K}/J
  \end{tikzcd}
\]
which we call $\mathcal{P}(s,p,t)$, 
and a 2-functor arising in this way is said to be {\em polynomial}. In
the important special case where $I=J=1$, so that $s$ and $t$ are
uniquely determined by their domains, we write $\mathcal{P}(p)$ rather
than $\mathcal{P}(s,p,t)$.

\begin{proposition}
  Any polynomial $2$-functor is a parametric right adjoint.
\end{proposition}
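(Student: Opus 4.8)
The plan is to decompose a polynomial $2$-functor into its three defining factors and prove that each one is separately a parametric right $2$-adjoint; since parametric right $2$-adjoints are closed under composition (as established above), this suffices. Recall that $\mathcal{P}(s,p,t)$ is the composite $\sum_t\circ\prod_p\circ s^\ast\colon \tcat{K}/I\to\tcat{K}/J$, and that each of the slice $2$-categories $\tcat{K}/I$, $\tcat{K}/E$, $\tcat{K}/B$, $\tcat{K}/J$ has a terminal object, namely the appropriate identity $1$-cell; this is what makes it meaningful to ask whether the factors are parametric right $2$-adjoints.

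First I would dispose of the two easy factors. By the very construction of the adjunction $\sum_s\dashv s^\ast$, the $2$-functor $s^\ast\colon\tcat{K}/I\to\tcat{K}/E$ is a right $2$-adjoint; and $\prod_p\colon\tcat{K}/E\to\tcat{K}/B$ is a right $2$-adjoint by the definition of $p$ being powerful. Hence by \cref{2-adj-is-para-2-adj} both are parametric right $2$-adjoints.

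The substantive point is the third factor $\sum_t\colon\tcat{K}/B\to\tcat{K}/J$, which is a left $2$-adjoint rather than a right one, yet is still a parametric right $2$-adjoint. The terminal object of $\tcat{K}/B$ is $\id_B$, and $\sum_t$ carries it to $(t\colon B\to J)$, so one must show that $(\sum_t)_1\colon \tcat{K}/B\to (\tcat{K}/J)/(t\colon B\to J)$ is a right $2$-adjoint. I would use the canonical strict isomorphism of $2$-categories $(\tcat{K}/J)/(t\colon B\to J)\cong \tcat{K}/B$: an object of the left-hand side is a $1$-cell $g\colon A\to B$ together with the $1$-cell $tg\colon A\to J$, but the latter is redundant, so the data amount to just $g\colon A\to B$, and likewise for $1$-cells and $2$-cells. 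Under this isomorphism $(\sum_t)_1$ is identified with the identity $2$-functor on $\tcat{K}/B$, which is trivially a right $2$-adjoint; hence $\sum_t$ is a parametric right $2$-adjoint.

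Composing the three factors and invoking closure of parametric right $2$-adjoints under composition then yields the result. The only step that is not immediate from the propositions already recorded is the treatment of $\sum_t$, and there the single idea needed is the cancellation of iterated slices $(\tcat{K}/J)/(B\to J)\cong\tcat{K}/B$; this is entirely formal, so I expect no real obstacle.
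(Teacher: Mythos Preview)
Your proof is correct. Both your approach and the paper's rely on the same key observation, namely the slice cancellation $(\tcat{K}/J)/(t\colon B\to J)\cong\tcat{K}/B$, but the organisation differs. The paper proceeds in one step: since the terminal object of $\tcat{K}/I$ is sent by $s^\ast$ and $\prod_p$ to the terminal objects of $\tcat{K}/E$ and $\tcat{K}/B$ respectively (being right $2$-adjoints), and then by $\sum_t$ to $t$, the factorisation $\mathcal{P}(s,p,t)_1$ is, under the slice cancellation, simply the composite $\prod_p\circ s^\ast$; this is a right $2$-adjoint with left $2$-adjoint $\sum_s\circ p^\ast$, and one is done. Your argument instead treats each of the three factors separately as a parametric right $2$-adjoint and then invokes closure under composition. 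The paper's route is shorter and exhibits the left adjoint of $\mathcal{P}(s,p,t)_1$ explicitly, whereas your route has the modest advantage of isolating the parametric-right-adjointness of $\sum_t$ as a reusable fact.
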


\begin{proof}
$\mathcal{P}(s,p,t)_1$ is just $\prod_p s^{*}$ which is right adjoint to
$\sum_s p^{*}$. 
\end{proof}

\begin{proposition}\label{prop:poly-familial}
  Let $\tcat{K}$ be a $2$-category with finite limits, and $p\colon E\to
  B$ a morphism in $\tcat{K}$ which is both powerful and 
  a representable discrete opfibration.
  Then $\mathcal{P}(p)\colon \tcat{K}\to\tcat{K}$ is a
  familial $2$-functor.
\end{proposition}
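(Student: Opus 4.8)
The plan is to use the decomposition $\mathcal{P}(p)=\sum_t\circ\prod_p\circ s^\ast$, where $s\colon E\to 1$ and $t\colon B\to 1$ are the unique maps. Since $s^\ast$ and $\prod_p$ are right $2$-adjoints they preserve terminal objects, so $s^\ast(1)=\id_E$ and $\prod_p(\id_E)\cong\id_B$, whence $\mathcal{P}(p)(1)=\sum_t(\id_B)=B$; unwinding the definitions, the comparison $2$-functor $\mathcal{P}(p)_1\colon\tcat{K}\to\tcat{K}/B$ sends $A$ to $\prod_p s^\ast A$ equipped with its structure map to $B$, which we have already identified with the $2$-functor $\prod_p s^\ast$ (now viewed as landing in $\tcat{K}/B$). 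Since this is already a right $2$-adjoint, to conclude that it is familial it remains only to equip each $\prod_p s^\ast A\to B$ with the structure of a split fibration in $\tcat{K}$, compatibly with precomposition and in a way preserved by $\prod_p s^\ast f$ for every $f\colon A\to A'$; this is exactly what is needed for $\mathcal{P}(p)_1$ to factorize through $U_B\colon\Spl_{\tcat{K}}(B)\to\tcat{K}/B$.

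Working representably, fix $X\in\tcat{K}$ and write $b^\ast E$ for the pullback of $p$ along a $1$-cell $b\colon X\to B$. Chasing the $2$-adjunction $p^\ast\dashv\prod_p$ together with the concrete descriptions of $s^\ast$ and of $\sum_t$ (the latter being the forgetful $2$-functor), one identifies the objects of the category $\tcat{K}(X,\mathcal{P}(p)A)$ with pairs $(b,\alpha)$ where $b\colon X\to B$ and $\alpha\colon b^\ast E\to A$, the functor to $\tcat{K}(X,B)$ being $(b,\alpha)\mapsto b$. The discrete-opfibration hypothesis enters in the description of the morphisms: a $2$-cell $\beta\colon b_1\Rightarrow b_2$ in $\tcat{K}$, whiskered with the projection $b_1^\ast E\to X$, yields a $2$-cell whose domain is the image under $\tcat{K}(b_1^\ast E,p)$ of the projection $b_1^\ast E\to E$; since $p$ is a representable discrete opfibration this $2$-cell has a unique lift, which (via the universal property of $b_2^\ast E$) determines a $1$-cell $\beta^\sharp\colon b_1^\ast E\to b_2^\ast E$ over $X$, and uniqueness of lifts makes $\beta\mapsto\beta^\sharp$ strictly functorial. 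One then checks that a morphism $(b_1,\alpha_1)\to(b_2,\alpha_2)$ in $\tcat{K}(X,\mathcal{P}(p)A)$ amounts to a pair $(\beta\colon b_1\Rightarrow b_2,\ \gamma\colon\alpha_1\Rightarrow\alpha_2\circ\beta^\sharp)$. Equivalently, $\tcat{K}(X,\mathcal{P}(p)A)$ is isomorphic over $\tcat{K}(X,B)$ to the Grothendieck construction of the functor $\Theta_{X,A}\colon\tcat{K}(X,B)^{\op}\to\CAT$ sending $b$ to $\tcat{K}(b^\ast E,A)$ and $\beta$ to $(\beta^\sharp)^\ast$; in particular $\tcat{K}(X,\mathcal{P}(p)A)\to\tcat{K}(X,B)$ is a split fibration, the chosen cartesian lift of $(b_2,\alpha_2)$ along $\beta$ being $(b_1,\alpha_2\circ\beta^\sharp)\xrightarrow{(\beta,\id)}(b_2,\alpha_2)$.

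Finally one assembles this into the required structure in $\tcat{K}$. Precomposition with $h\colon Y\to X$ carries $b^\ast E$ to $(bh)^\ast E$ and, since the unique lifts of a discrete opfibration are preserved by every substitution functor $\tcat{K}(h,E)$, carries $\beta^\sharp$ to $(\beta h)^\sharp$; hence the chosen cartesian lifts are preserved, so each $\prod_p s^\ast A\to B$ is genuinely a split fibration in $\tcat{K}$. For $f\colon A\to A'$, the $1$-cell $\prod_p s^\ast f$ acts representably by $(b,\alpha)\mapsto(b,f\circ\alpha)$, that is, fibrewise by postcomposition with $f$, which commutes with the reindexing $(\beta^\sharp)^\ast$ and so preserves chosen cartesian lifts. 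Thus $\mathcal{P}(p)_1$ factorizes through $\Spl_{\tcat{K}}(B)$, and $\mathcal{P}(p)$ is familial. The main obstacle is the identification of the morphisms of $\tcat{K}(X,\mathcal{P}(p)A)$ carried out in the second paragraph: one must unwind the $2$-adjunction $p^\ast\dashv\prod_p$ with some care---paying attention to strict versus pseudo pullbacks in $\tcat{K}$---and it is precisely the discrete opfibration property of $p$ that allows $2$-cells lying over a given base $2$-cell $\beta$ to be matched with $2$-cells into the $\beta^\sharp$-reindexing.
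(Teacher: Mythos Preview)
Your proof is correct. The paper, however, does not argue at all: its entire proof is the single sentence ``This is a special case of \cite[Proposition~4.4.5]{Weber-polynomials}.'' So you have taken a different route, supplying a direct, self-contained verification in place of a citation to Weber's general machinery.

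What you have written is essentially an unpacking of Weber's argument in this setting: identify $\mathcal{P}(p)_1$ with the right $2$-adjoint $\prod_p\circ s^*$, then exhibit the split-fibration structure on each $\prod_p s^*A\to B$ representably by realising $\tcat{K}(X,\mathcal{P}(p)A)$ as the Grothendieck construction of $b\mapsto\tcat{K}(b^*E,A)$, with reindexing furnished by the opfibration-induced maps $\beta^\sharp$. The step you flag as the main obstacle---matching $2$-cells $\phi$ with pairs $(\beta,\gamma)$---is indeed where the work lies; it can be organised by first constructing the candidate cartesian lift $\bar\beta$ (pairing the opfibration lift $\hat\beta$ with the appropriate whiskering via the $2$-dimensional universal property of $q^*E=P\times_B E$), and then using uniqueness of $\hat\beta$ together with the $2$-adjunction $p^*\dashv\prod_p$ on the fibre over $b_1$ to obtain the unique vertical factorisation. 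Your caveat about strict versus pseudo pullbacks is unnecessary: the hypothesis is that $\tcat{K}$ has finite $2$-limits, so the pullbacks already enjoy the strict $2$-dimensional universal property you need. The advantage of your approach is that it is self-contained; the paper's approach simply defers the content to the literature.
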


\begin{proof}
  This is a special case of \cite[Proposition~4.4.5]{Weber-polynomials}.
\end{proof}

\section{Powerfulness of discrete opfibrations}\label{sec:dof}
The aim of this section is to show that any discrete opfibration between virtual double categories is powerful in the $2$-dimensional sense defined in \cref{subsec:poly} (\cref{prop:disc-opfib-powerful}).
We note that the $1$-dimensional aspect of this is a special case of a result of Bowler: combining Section~2.3 and Proposition~3.1.6 of \cite{Bowler-PhD}, one concludes that all \emph{discrete Conduch\'e fibrations} (as in the latter proposition) between virtual double categories are powerful in the $1$-dimensional sense. 
We give an explicit description of the $2$-functor $\prod_P$ induced by a discrete opfibration $P$ of virtual double categories, as this is a key ingredient in the various constructions for virtual double categories we will study below.

We now introduce the notion of discrete opfibration of virtual double categories, strictly stronger than the representable notion, that was referred to in \cref{subsec:familial}.

\begin{definition}[{\cite[IV.2]{Burroni-T-cats} and \cite[Section~6.3]{Leinster-HOHC}}]
\label{def:disc-opfib}
    A virtual double functor $P\colon \vdcat{X}\to\vdcat{Y}$ is a \emph{discrete opfibration} if the following conditions are satisfied. 
    \begin{itemize}
        \item For each object $X\in \vdcat{X}$ and each vertical morphism $v\colon PX\to Y$ in $\vdcat{Y}$, there exists a unique vertical morphism $\overline{v}_X\colon X\to v_\ast X$ in $\vdcat{X}$ with $P\overline v_X=v$. (That is, the functor $P_\vc\colon \vdcat{X}_\vc\to \vdcat{Y}_\vc$ is a discrete opfibration of categories.)
        \item For each sequence of horizontal morphisms 
        \begin{equation*}
            \begin{tikzpicture}[baseline=-\the\dimexpr\fontdimen22\textfont2\relax ]
      \node(00) at (0,0) {$X_0$};
      \node(01) at (1.5,0) {$X_1$};
      \node(02) at (3,0) {$\cdots$};
      \node(03) at (4.5,0) {$X_n$}; 
      \draw [pto] (00) to node[auto, labelsize] {$\Phi_1$} (01);  
      \draw [pto] (01) to node[auto, labelsize] {$\Phi_2$} (02);  
      \draw [pto] (02) to node[auto, labelsize] {$\Phi_n$} (03);    
\end{tikzpicture}
        \end{equation*}
        in $\vdcat{X}$ and each multicell 
        \begin{equation*}
\begin{tikzpicture}[baseline=-\the\dimexpr\fontdimen22\textfont2\relax ]
      \node(00) at (0,0.75) {$PX_0$};
      \node(01) at (1.5,0.75) {$PX_1$};
      \node(02) at (3,0.75) {$\cdots$};
      \node(03) at (4.5,0.75) {$PX_n$};
      \node(10) at (0,-0.75) {$Y$};
      \node(11) at (4.5,-0.75) {$Y'$}; 
      \draw [pto] (00) to node[auto, labelsize] {$P\Phi_1$} (01);  
      \draw [pto] (01) to node[auto, labelsize] {$P\Phi_2$} (02);  
      \draw [pto] (02) to node[auto, labelsize] {$P\Phi_n$} (03);  
      \draw [pto] (10) to node[auto, swap, labelsize] {$\Psi$} (11); 
      \draw [->] (00) to node[auto, swap, labelsize] {$v$} (10); 
      \draw [->] (03) to node[auto,labelsize] {$v'$} (11);
      \node at (2.25,0) {$\nu$}; 
\end{tikzpicture}
\end{equation*}
    in $\vdcat{Y}$, there exists a unique multicell 
            \begin{equation*}
\begin{tikzpicture}[baseline=-\the\dimexpr\fontdimen22\textfont2\relax ]
      \node(00) at (0,0.75) {$X_0$};
      \node(01) at (1.5,0.75) {$X_1$};
      \node(02) at (3,0.75) {$\cdots$};
      \node(03) at (4.5,0.75) {$X_n$};
      \node(10) at (0,-0.75) {$v_\ast X_0$};
      \node(11) at (4.5,-0.75) {$v'_\ast X_n$}; 
      \draw [pto] (00) to node[auto, labelsize] {$\Phi_1$} (01);  
      \draw [pto] (01) to node[auto, labelsize] {$\Phi_2$} (02);  
      \draw [pto] (02) to node[auto, labelsize] {$\Phi_n$} (03);  
      \draw [pto] (10) to node[auto, swap, labelsize] {$\nu_\ast (\Phi_1,\dots,\Phi_n)$} (11); 
      \draw [->] (00) to node[auto, swap, labelsize] {$\overline v_{X_0}$} (10); 
      \draw [->] (03) to node[auto,labelsize] {$\overline{v'}_{X_n}$} (11);
      \node at (2.25,0) {$\overline \nu_{\Phi_1,\dots,\Phi_n}$}; 
\end{tikzpicture}
\end{equation*}
    in $\vdcat{Y}$ with $P\overline \nu_{\Phi_1,\dots,\Phi_n}=\nu$.
    \end{itemize}
\end{definition}

\begin{remark}\label{rmk:dof-for-T-cats}
    Regarding virtual double categories as $T$-categories (see \cref{rmk:vdbl-cat-as-T-cat}), a virtual double functor $P\colon \vdcat{X}\to \vdcat{Y}$ is a discrete opfibration in the sense of \cref{def:disc-opfib} if and only if the left square in the following commutative diagram is a pullback in $\mathbf{Gph}$:
\begin{equation*}
\begin{tikzpicture}[baseline=-\the\dimexpr\fontdimen22\textfont2\relax ]
      \node(0) at (0,0) {$TX_0$};
      \node(1)at (2,0) {$X_1$};
      \node(2) at (4,0) {$X_0$};
      \node(00) at (0,-1.5) {$TY_0$};
      \node(01)at (2,-1.5) {$Y_1$};
      \node(02) at (4,-1.5) {$Y_0$.};
      \draw [->] (1) to node[auto,swap, labelsize] {$d_1$}  (0);
      \draw [->] (1) to node[auto,labelsize] {$d_0$}  (2);
      \draw [->] (01) to node[auto, labelsize] {$d_1$}  (00);
      \draw [->] (01) to node[auto,swap,labelsize] {$d_0$}  (02);
      \draw [->] (0) to node[auto,swap, labelsize] {$TP_0$}  (00);
      \draw [->] (1) to node[auto,labelsize] {$P_1$}  (01);
      \draw [->] (2) to node[auto,labelsize] {$P_0$}  (02);
\end{tikzpicture}
\end{equation*}
    Discrete opfibrations are defined in these terms in \cite[IV.2]{Burroni-T-cats} (where they are said to be \emph{\'etale}) and \cite[Section~6.3]{Leinster-HOHC}, and further studied in \cite{Tholen-Yeganeh}.
\end{remark}

The proof of the following fact is straightforward. 

\begin{proposition}\label{dof-between-vdbl-is-internal-dof}
    Any discrete opfibration of virtual double categories in the sense of \cref{def:disc-opfib} is a representable discrete opfibration in $\VDbl$. 
\end{proposition}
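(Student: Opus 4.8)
The plan is to unwind the definition of \emph{representable discrete opfibration} recalled in \cref{subsec:familial}: writing $P\colon\vdcat{X}\to\vdcat{Y}$ for the given discrete opfibration, we must show that for every virtual double category $\vdcat{A}$ the functor $\VDbl(\vdcat{A},P)\colon\VDbl(\vdcat{A},\vdcat{X})\to\VDbl(\vdcat{A},\vdcat{Y})$ is a discrete opfibration of categories. Concretely: given a virtual double functor $F\colon\vdcat{A}\to\vdcat{X}$, a virtual double functor $G\colon\vdcat{A}\to\vdcat{Y}$, and a vertical transformation $\beta\colon PF\to G$, we must produce a unique pair consisting of a virtual double functor $F'\colon\vdcat{A}\to\vdcat{X}$ with $PF'=G$ together with a vertical transformation $\overline\beta\colon F\to F'$ with $P\overline\beta=\beta$. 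The idea is to define $F'$ and $\overline\beta$ on each kind of data of $\vdcat{A}$ by invoking the unique liftings supplied by \cref{def:disc-opfib}; since every value is forced by a uniqueness clause, existence and uniqueness come out together, and what remains is only to check that the separately-defined pieces fit together into a virtual double functor and a vertical transformation.

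For an object $a$ of $\vdcat{A}$, the first clause of \cref{def:disc-opfib} applied to $Fa\in\vdcat{X}$ and $\beta_a\colon PFa\to Ga$ gives a unique lift $\overline{(\beta_a)}_{Fa}\colon Fa\to(\beta_a)_\ast(Fa)$; set $F'a:=(\beta_a)_\ast(Fa)$ and $\overline\beta_a:=\overline{(\beta_a)}_{Fa}$. For a vertical morphism $u\colon a\to a'$ put $F'u:=\overline{(Gu)}_{F'a}$, the unique lift of $Gu\colon PF'a=Ga\to Ga'$ at $F'a$; functoriality of the assignment $v\mapsto v_\ast$ (a consequence of $P_\vc$ being a discrete opfibration of categories) together with naturality of $\beta_\vc$ shows $(Gu)_\ast(F'a)=F'a'$, so that the object and vertical-morphism assignments form a functor $\vdcat{A}_\vc\to\vdcat{X}_\vc$. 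For a horizontal morphism $\Gamma\colon a\pto a'$ apply the second clause of \cref{def:disc-opfib} to the length-one sequence $(F\Gamma)$ and the cell $\beta_\Gamma\colon PF\Gamma\to G\Gamma$, whose legs $\beta_a,\beta_{a'}$ lift to $\overline\beta_a,\overline\beta_{a'}$; this yields a unique cell $\overline\beta_\Gamma\colon F\Gamma\to F'\Gamma$ over $\beta_\Gamma$, where $F'\Gamma:=(\beta_\Gamma)_\ast(F\Gamma)$, and $PF'\Gamma=G\Gamma$. Finally, for a multicell $\theta$ of $\vdcat{A}$ with source $(\Gamma_1,\dots,\Gamma_n)$, target $\Delta$, and legs $u,u'$, apply the second clause of \cref{def:disc-opfib} to the sequence $(F'\Gamma_1,\dots,F'\Gamma_n)$ in $\vdcat{X}$ and the multicell $G\theta$ of $\vdcat{Y}$, obtaining a multicell $F'\theta$.

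It then remains to check that $F'$ preserves vertical identity cells and composition of multicells, that $PF'=G$, that $\overline\beta$ is a vertical transformation with $P\overline\beta=\beta$, and that $(F',\overline\beta)$ is the only such pair. Each of these follows from the uniqueness half of \cref{def:disc-opfib}, once one observes that a multicell of $\vdcat{X}$ lying over a prescribed multicell of $\vdcat{Y}$ and having a prescribed source sequence is unique with no condition imposed on its legs (those being forced by the object-lifting clause) or target. The one point that is not purely formal --- and which I expect to be the main obstacle --- is the internal consistency of the construction on multicells: the multicell $F'\theta$ produced above a priori has target $(G\theta)_\ast(F'\Gamma_1,\dots,F'\Gamma_n)$, and one must check this equals the horizontal morphism $F'\Delta$ defined earlier, so that $F'\theta$ has the correct boundary. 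This is seen by comparing the two composite multicells $\overline\beta_\Delta\circ F\theta$ and $F'\theta\circ(\overline\beta_{\Gamma_1},\dots,\overline\beta_{\Gamma_n})$ of $\vdcat{X}$: both have source $(F\Gamma_1,\dots,F\Gamma_n)$, and applying $P$ to either one and using the vertical transformation axiom for $\beta$ with respect to $\theta$ shows that both lie over $G\theta\circ(\beta_{\Gamma_1},\dots,\beta_{\Gamma_n})$; by the uniqueness just noted they coincide, which at once identifies the target of $F'\theta$ with $F'\Delta$ and establishes the vertical transformation axiom for $\overline\beta$. One could equally argue throughout via the pullback description of \cref{rmk:dof-for-T-cats}, under which a multicell of $\vdcat{X}$ is literally a multicell of $\vdcat{Y}$ equipped with a compatible lift of its top path of horizontal morphisms.
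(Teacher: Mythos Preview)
Your proposal is correct and is precisely the kind of direct verification the paper has in mind: the paper itself records the statement as ``straightforward'' and gives no further argument, so your unwinding of the lifting clauses of \cref{def:disc-opfib} to produce the unique $(F',\overline\beta)$ is exactly what is being elided. The one delicate point you flag --- that the target of the lifted multicell $F'\theta$ agrees with the previously defined $F'\Delta$ --- is handled correctly by your comparison of $\overline\beta_\Delta\circ F\theta$ and $F'\theta\circ(\overline\beta_{\Gamma_1},\dots,\overline\beta_{\Gamma_n})$ via the uniqueness clause.
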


\begin{remark}\label{rmk:dof-vs-internal-dof}
    The converse of \cref{dof-between-vdbl-is-internal-dof} does not hold. 
    For example, for any virtual double category $\vdcat{Y}$, let $\vdcat{X}$ be the virtual double subcategory of $\vdcat{Y}$ obtained by discarding all multicells which are not cells, while retaining all objects, vertical morphisms, horizontal morphisms, and cells. Then the inclusion $P\colon \vdcat{X}\to \vdcat{Y}$ is a representable discrete opfibration in $\VDbl$. (That is, the functor $\VDbl(\vdcat{A},P)\colon \VDbl(\vdcat{A},\vdcat{X})\to \VDbl(\vdcat{A},\vdcat{Y})$ is a discrete opfibration for any $\vdcat{A}\in \VDbl$. To see this, observe that if $\vdcat{A}$ has a multicell which is not a cell, then the category $\VDbl(\vdcat{A},\vdcat{X})$ is empty, and otherwise, $\VDbl(\vdcat{A},P)$ is an isomorphism of categories.) 
    However, $P\colon \vdcat{X}\to\vdcat{Y}$ fails to be a discrete opfibration in the sense of \cref{def:disc-opfib} whenever $\vdcat{Y}$ has a multicell which is not a cell. 
\end{remark}

In what follows, whenever we talk about discrete opfibrations between virtual double categories, we mean those in the sense of \cref{def:disc-opfib} rather than representable discrete opfibrations in $\VDbl$.

\begin{proposition}
    \label{prop:disc-opfib-powerful}
    Every discrete opfibration $P\colon \vdcat{X}\to\vdcat{Y}$ between virtual double categories is powerful; that is, we have a $2$-adjunction 
    \begin{equation}
\label{eqn:prod-ast}
\begin{tikzpicture}[baseline=-\the\dimexpr\fontdimen22\textfont2\relax ]
      \node(0) at (0,0) {$\VDbl/\vdcat{X}$};
      \node(1) at (4,0) {$\VDbl/\vdcat{Y}$.};
      \draw [<-, transform canvas={yshift=5}] (0) to node[auto, labelsize] {$P^\ast$}  (1);  
      \draw [<-, transform canvas={yshift=-5}] (1) to node[auto, labelsize] {$\prod_P$}  (0); 
      \path(0) to node[rotate=-90] {$\dashv$} (1);
\end{tikzpicture}
\end{equation}
\end{proposition}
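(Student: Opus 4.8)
The plan is to construct the right $2$-adjoint $\prod_P$ by an explicit formula and then verify its universal property directly; the underlying $1$-categorical adjunction is already covered by Bowler's theorem \cite{Bowler-PhD} (a discrete opfibration being in particular a discrete Conduché fibration), so the real point is to record the explicit description, which is needed in the later sections, and to check that the adjunction is genuinely $2$-categorical.

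For the construction, given $F\colon\vdcat{A}\to\vdcat{X}$ in $\VDbl/\vdcat{X}$ I would build $\prod_P\vdcat{A}$, equipped with a projection to $\vdcat{Y}$, as a ``virtual double category of sections of $F$ along the fibres of $P$''. An object is a pair $(Y,\sigma)$ with $Y\in\vdcat{Y}$ and $\sigma$ a choice, for every $X\in\vdcat{X}$ with $PX=Y$, of an object $\sigma(X)\in\vdcat{A}$ over $X$ (legitimate because the vertical fibres of a discrete opfibration are discrete). A vertical morphism $(Y,\sigma)\to(Y',\sigma')$ is a pair $(v\colon Y\to Y',\phi)$ where $\phi$ assigns to each $X$ over $Y$ a vertical morphism $\phi(X)\colon\sigma(X)\to\sigma'(v_\ast X)$ of $\vdcat{A}$ over the unique lift $\overline{v}_X$. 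A horizontal morphism $(Y,\sigma)\pto(Y',\sigma')$ is a pair $(\Xi\colon Y\pto Y',\Theta)$ where $\Theta$ assigns to each $\Phi\colon X\pto X'$ of $\vdcat{X}$ with $P\Phi=\Xi$ a horizontal morphism $\Theta(\Phi)\colon\sigma(X)\pto\sigma'(X')$ of $\vdcat{A}$ over $\Phi$. Finally, a multicell of $\prod_P\vdcat{A}$ with a given boundary is a pair $(\mu,\vartheta)$ where $\mu$ is a multicell of $\vdcat{Y}$ (with the boundary obtained by projection) and $\vartheta$ assigns to each composable sequence $(\Phi_1,\dots,\Phi_n)$ in $\vdcat{X}$ lifting the domain of $\mu$ a multicell of $\vdcat{A}$ whose boundary is read off from the canonical lift $\overline{\mu}_{\Phi_1,\dots,\Phi_n}$ supplied by \cref{def:disc-opfib} --- its codomain horizontal morphism being $\Theta$'s partner evaluated at $\mu_\ast(\Phi_1,\dots,\Phi_n)$, and its vertical sides coming from $\phi,\phi'$ --- the nullary case $n=0$ being handled by the corresponding clause of \cref{def:disc-opfib}. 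Identities are transported from $\vdcat{A}$ using $(\id_Y)_\ast X=X$, and composition of multicells in $\prod_P\vdcat{A}$ is defined by first forming, via iterated unique lifting, the canonical lift to $\vdcat{X}$ of a composable configuration of multicells of $\vdcat{Y}$, and then composing the associated multicells in $\vdcat{A}$. The virtual double category axioms for $\prod_P\vdcat{A}$ then follow from those of $\vdcat{A}$ and $\vdcat{Y}$ together with the uniqueness parts of \cref{def:disc-opfib}; the projection $\prod_P\vdcat{A}\to\vdcat{Y}$ forgets $\sigma,\phi,\Theta,\vartheta$, and $\prod_P$ acts on $1$- and $2$-cells of $\VDbl/\vdcat{X}$ by postcomposition.

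To establish the adjunction I would produce, for each $G\colon\vdcat{B}\to\vdcat{Y}$, an isomorphism of categories
\[
\VDbl/\vdcat{Y}\bigl(G,\ {\textstyle\prod_P} F\bigr)\ \cong\ \VDbl/\vdcat{X}\bigl(P^\ast G,\ F\bigr),
\]
$2$-natural in $G$ and $F$, where $P^\ast G$ denotes the projection $\vdcat{B}\times_{\vdcat{Y}}\vdcat{X}\to\vdcat{X}$. Unpacking a virtual double functor $H\colon\vdcat{B}\times_{\vdcat{Y}}\vdcat{X}\to\vdcat{A}$ over $\vdcat{X}$ object-by-object, morphism-by-morphism and cell-by-cell --- and using repeatedly that over a fixed cell of $\vdcat{Y}$ with chosen lifts of its domain the cells of $\vdcat{B}\times_{\vdcat{Y}}\vdcat{X}$ correspond bijectively to those of $\vdcat{B}$ --- one sees that $H$ amounts precisely to a virtual double functor $\vdcat{B}\to\prod_P\vdcat{A}$ over $\vdcat{Y}$: an object $b$ gives the section $X\mapsto H(b,X)$, a vertical morphism $w$ gives $X\mapsto H(\overline{Gw}_X,w)$, a horizontal morphism $\Upsilon$ gives $\Phi\mapsto H(\Phi,\Upsilon)$, and so on. This correspondence is manifestly bijective, $2$-natural, and compatible with $2$-cells --- which on either side are families of vertical morphisms of $\vdcat{A}$ (respectively of $\prod_P\vdcat{A}$) lying over identities --- so it is an isomorphism of categories, giving $P^\ast\dashv\prod_P$.

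The conceptual content here is slight: everything is forced by the unique-lifting property of $P$. The real work, and the main obstacle, is combinatorial: writing the multicell data of $\prod_P\vdcat{A}$ unambiguously (in particular pinning down the codomain horizontal morphisms and the nullary clause), verifying the unit and associativity axioms of a virtual double category, and checking the $2$-naturality of the adjunction bijection, including its behaviour on $2$-cells. One should also note, for the size convention, that if $\vdcat{X}$ is large then each fibre $\{X\in\vdcat{X}:PX=Y\}$ is again a large set, so that $\prod_P\vdcat{A}$ is a large virtual double category and the $2$-adjunction \eqref{eqn:prod-ast} lives in $\VDbl$.
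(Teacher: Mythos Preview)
Your proposal is correct and follows essentially the same approach as the paper: both construct $\prod_P\vdcat{A}$ explicitly as the virtual double category of ``sections along the fibres of $P$'' (the paper writes objects as $(Y,\vect{A})$ with $\vect{A}=(A_X)_{X\in P^{-1}(Y)}$, which is your $(Y,\sigma)$), and then verify the universal property directly. The only cosmetic difference is that the paper phrases the verification via an explicit counit $\varepsilon_F\colon P^\ast\prod_P\vdcat{A}\to\vdcat{A}$ and checks that the induced composite \eqref{eqn:composite-epsilon-powerful} is an isomorphism of categories, whereas you describe the hom-category bijection directly; these are equivalent presentations of the same argument.
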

\begin{proof}
We proceed as follows. First, for any $(F\colon \vdcat{A}\to \vdcat{X})\in\VDbl/\vdcat{X}$, we define $(\prod_PF\colon \prod_P\vdcat{A}\to \vdcat{Y})\in\VDbl/\vdcat{Y}$ and a morphism $\varepsilon_{F}\colon (P^\ast\prod_PF\colon P^\ast \prod_P\vdcat{A}\to \vdcat{X})\to (F\colon \vdcat{A}\to \vdcat{X})$ in $\VDbl/\vdcat{X}$. We then see that for any $(G\colon \vdcat{B}\to \vdcat{Y})\in \VDbl/\vdcat{Y}$, the composite
\begin{multline}
\label{eqn:composite-epsilon-powerful}
(\VDbl/\vdcat{Y})\left(
\left(
\begin{tikzpicture}[baseline=-\the\dimexpr\fontdimen22\textfont2\relax ]
      \node(00) at (0,0.4) {$\vdcat{B}$};
      \node(10) at (0,-0.4) {$\vdcat{Y}$}; 
      \draw [->] (00) to node[auto, labelsize] {$G$} (10);  
\end{tikzpicture}
\right),
\left(
\begin{tikzpicture}[baseline=-\the\dimexpr\fontdimen22\textfont2\relax ]
      \node(00) at (0,0.4) {$\prod_P\vdcat{A}$};
      \node(10) at (0,-0.4) {$\vdcat{Y}$}; 
      \draw [->] (00) to node[auto, labelsize] {$\prod_PF$} (10);  
\end{tikzpicture}
\right)\right)
\xrightarrow{P^\ast_{G,\prod_PF}}
(\VDbl/\vdcat{X})\left(
\left(
\begin{tikzpicture}[baseline=-\the\dimexpr\fontdimen22\textfont2\relax ]
      \node(00) at (0,0.4) {$P^\ast\vdcat{B}$};
      \node(10) at (0,-0.4) {$\vdcat{X}$}; 
      \draw [->] (00) to node[auto, labelsize] {$P^\ast G$} (10);  
\end{tikzpicture}
\right),
\left(
\begin{tikzpicture}[baseline=-\the\dimexpr\fontdimen22\textfont2\relax ]
      \node(00) at (0,0.4) {$P^\ast \prod_P\vdcat{A}$};
      \node(10) at (0,-0.4) {$\vdcat{X}$}; 
      \draw [->] (00) to node[auto, labelsize] {$P^\ast \prod_PF$} (10);  
\end{tikzpicture}
\right)\right)\\
\xrightarrow{(\VDbl/\vdcat{X})(1,\varepsilon_F)}
(\VDbl/\vdcat{X})\left(
\left(
\begin{tikzpicture}[baseline=-\the\dimexpr\fontdimen22\textfont2\relax ]
      \node(00) at (0,0.4) {$P^\ast\vdcat{B}$};
      \node(10) at (0,-0.4) {$\vdcat{X}$}; 
      \draw [->] (00) to node[auto, labelsize] {$P^\ast G$} (10);  
\end{tikzpicture}
\right),
\left(
\begin{tikzpicture}[baseline=-\the\dimexpr\fontdimen22\textfont2\relax ]
      \node(00) at (0,0.4) {$\vdcat{A}$};
      \node(10) at (0,-0.4) {$\vdcat{X}$}; 
      \draw [->] (00) to node[auto, labelsize] {$F$} (10);  
\end{tikzpicture}
\right)\right)
\end{multline}
is an isomorphism of categories. 
It then follows that the assignment $(\vdcat{A},F)\mapsto (\prod_P\vdcat{A},\prod_PF)$ uniquely extends to a $2$-functor so that $\varepsilon$ becomes $2$-natural, and becomes the counit of the $2$-adjunction \cref{eqn:prod-ast}. 

For each object $Y\in\vdcat{Y}$, we denote by $P^{-1}(Y)$ the set of all objects $X\in\vdcat{X}$ with $PX=Y$, and for each horizontal morphism $\Psi$ in $\vdcat{Y}$, we denote by $P^{-1}(\Psi)$ the set of all horizontal morphisms $\Phi$ in $\vdcat{X}$ with $P\Phi=\Psi$.
Take any $(F\colon \vdcat{A}\to \vdcat{X})\in \VDbl/\vdcat{X}$. We define the virtual double category $\prod_P\vdcat{A}$ as follows. 
\begin{itemize}
    \item An object in $\prod_P\vdcat{A}$ is a pair $(Y,\vect{A})$ of an object $Y\in\vdcat{Y}$ and a family $\vect{A}=(A_X)_{X\in P^{-1}(Y)}$ of objects in $\vdcat{A}$, satisfying $FA_X=X$ for each $X\in P^{-1}(Y)$.
    \item A vertical morphism in $\prod_P\vdcat{A}$ from $(Y,\vect{A})$ to $(Y',\vect{A}')$ is a pair $(v,\vect{f})$ of a vertical morphism $v\colon Y\to Y'$ in $\vdcat{Y}$ and a family $\vect{f}=(f_X\colon A_X\to A'_{v_\ast X})_{X\in P^{-1}(Y)}$ of vertical morphisms in $\vdcat{A}$, satisfying $Ff_X= \overline v_X\colon X\to v_\ast X$ for each $X\in P^{-1}(Y)$.
    (The vertical category $(\prod_P\vdcat{A})_\vc$ of $\prod_P\vdcat{A}$ is the category $\prod_{P_\vc}\vdcat{A}_\vc$ obtained by applying the $2$-functor $\prod_{P_\vc}\colon\CAT/\vdcat{X}_\vc\to \CAT/\vdcat{Y}_\vc$ to the object $(F_\vc\colon \vdcat{A}_\vc\to \vdcat{X}_\vc)\in \CAT/\vdcat{X}_\vc$.)
    \item A horizontal morphism in $\prod_P\vdcat{A}$ from $(Y,\vect{A})$ to $(Y',\vect{A}')$ is a pair $(\Psi,\vect{\Gamma})$ of a horizontal morphism $\Psi\colon Y\pto Y'$ in $\vdcat{Y}$ and a family $\vect{\Gamma}$ of horizontal morphisms which consists of, for each $(\Phi\colon X\pto X')\in P^{-1}(\Psi)$, a horizontal morphism $\Gamma_\Phi\colon A_X\pto A'_{X'}$ in $\vdcat{A}$ satisfying $F\Gamma_\Phi=\Phi$.
    \item A multicell in $\prod_P\vdcat{A}$ of type
\begin{equation}
\label{eqn:multicell-in-ProdP}
\begin{tikzpicture}[baseline=-\the\dimexpr\fontdimen22\textfont2\relax ]
      \node(00) at (0,0.75) {$(Y_0,\vect{A}_0)$};
      \node(01) at (2.5,0.75) {$(Y_1,\vect{A}_1)$};
      \node(02) at (5,0.75) {$\cdots$};
      \node(03) at (7.5,0.75) {$(Y_n,\vect{A}_n)$};
      \node(10) at (0,-0.75) {$(Y,\vect{A})$};
      \node(11) at (7.5,-0.75) {$(Y',\vect{A}')$};
      \draw [pto] (00) to node[auto, labelsize] {$(\Psi_1,\vect{\Gamma}_1)$} (01);
      \draw [pto] (01) to node[auto, labelsize] {$(\Psi_2,\vect{\Gamma}_2)$} (02);
      \draw [pto] (02) to node[auto, labelsize] {$(\Psi_n,\vect{\Gamma}_n)$} (03);
      \draw [pto] (10) to node[auto, swap, labelsize] {$(\Psi,\vect{\Gamma})$} (11);
      \draw [->] (00) to node[auto, swap, labelsize] {$(v,\vect{f})$} (10); 
      \draw [->] (03) to node[auto,labelsize] {$(v',\vect{f}')$} (11);
      \node at (3.75,0) {$(\nu,\vect{\alpha})$};
\end{tikzpicture}
\end{equation}
is a pair of a multicell 
\begin{equation*}
\begin{tikzpicture}[baseline=-\the\dimexpr\fontdimen22\textfont2\relax ]
      \node(00) at (0,0.75) {$Y_0$};
      \node(01) at (1.5,0.75) {$Y_1$};
      \node(02) at (3,0.75) {$\cdots$};
      \node(03) at (4.5,0.75) {$Y_n$};
      \node(10) at (0,-0.75) {$Y$};
      \node(11) at (4.5,-0.75) {$Y'$}; 
      \draw [pto] (00) to node[auto, labelsize] {$\Psi_1$} (01);  
      \draw [pto] (01) to node[auto, labelsize] {$\Psi_2$} (02);  
      \draw [pto] (02) to node[auto, labelsize] {$\Psi_n$} (03);  
      \draw [pto] (10) to node[auto, swap, labelsize] {$\Psi$} (11); 
      \draw [->] (00) to node[auto, swap, labelsize] {$v$} (10); 
      \draw [->] (03) to node[auto,labelsize] {$v'$} (11);
      \node at (2.25,0) {$\nu$}; 
\end{tikzpicture}
\end{equation*}
in $\vdcat{Y}$ and a family $\vect{\alpha}$ of multicells which consists of, for each sequence of horizontal morphisms 
\begin{equation*}
    \begin{tikzpicture}[baseline=-\the\dimexpr\fontdimen22\textfont2\relax ]
      \node(00) at (0,0) {$X_0$};
      \node(01) at (1.5,0) {$X_1$};
      \node(02) at (3,0) {$\cdots$};
      \node(03) at (4.5,0) {$X_n$};
      \draw [pto] (00) to node[auto, labelsize] {$\Phi_1$} (01);
      \draw [pto] (01) to node[auto, labelsize] {$\Phi_2$} (02);
      \draw [pto] (02) to node[auto, labelsize] {$\Phi_n$} (03);
\end{tikzpicture}
\end{equation*}
in $\vdcat{X}$ which is mapped by $P$ to 
\begin{equation*}
    \begin{tikzpicture}[baseline=-\the\dimexpr\fontdimen22\textfont2\relax ]
      \node(00) at (0,0) {$Y_0$};
      \node(01) at (1.5,0) {$Y_1$};
      \node(02) at (3,0) {$\cdots$};
      \node(03) at (4.5,0) {$Y_n$,};
      \draw [pto] (00) to node[auto, labelsize] {$\Psi_1$} (01);
      \draw [pto] (01) to node[auto, labelsize] {$\Psi_2$} (02);
      \draw [pto] (02) to node[auto, labelsize] {$\Psi_n$} (03);
\end{tikzpicture}
\end{equation*}
a multicell 
\begin{equation*}
\begin{tikzpicture}[baseline=-\the\dimexpr\fontdimen22\textfont2\relax ]
      \node(00) at (0,0.75) {$A_{0,X_0}$};
      \node(01) at (2,0.75) {$A_{1,X_1}$};
      \node(02) at (4,0.75) {$\cdots$};
      \node(03) at (6,0.75) {$A_{n,X_n}$};
      \node(10) at (0,-0.75) {$A_{v_\ast X_0}$};
      \node(11) at (6,-0.75) {$A'_{v'_\ast X_n}$}; 
      \draw [pto] (00) to node[auto, labelsize] {${\Gamma}_{1,\Phi_1}$} (01);  
      \draw [pto] (01) to node[auto, labelsize] {${\Gamma}_{2,\Phi_2}$} (02);  
      \draw [pto] (02) to node[auto, labelsize] {${\Gamma}_{n,\Phi_n}$} (03);  
      \draw [pto] (10) to node[auto, swap, labelsize] {${\Gamma}_{\nu_\ast(\Phi_1,\dots,\Phi_n)}$} (11); 
      \draw [->] (00) to node[auto, swap, labelsize] {$f_{X_0}$} (10); 
      \draw [->] (03) to node[auto,labelsize] {$f'_{X_n}$} (11);
      \node at (3,0) {$\alpha_{\Phi_1,\dots,\Phi_n}$}; 
\end{tikzpicture}
\end{equation*}
in $\vdcat{A}$ satisfying $F\alpha_{\Phi_1,\dots,\Phi_n}= \overline{\nu}_{\Phi_1,\dots,\Phi_n}$.
\end{itemize}
With the evident operations, $\prod_P\vdcat{A}$ becomes a virtual double category.
We have the virtual double functor $\prod_PF\colon\prod_P\vdcat{A}\to \vdcat{Y}$ given by the first projection.
This completes the definition of $(\prod_PF\colon \prod_P\vdcat{A}\to\vdcat{Y})\in\VDbl/\vdcat{Y}$.

Now take the pullback 
\begin{equation*}
\begin{tikzpicture}[baseline=-\the\dimexpr\fontdimen22\textfont2\relax ]
      \node(00) at (0,0.75) {$P^\ast \prod_P\vdcat{A}$};
      \node(01) at (2,0.75) {$\prod_P\vdcat{A}$};
      \node(10) at (0,-0.75) {$\vdcat{X}$};
      \node(11) at (2,-0.75) {$\vdcat{Y}$}; 
      \draw [->] (00) to node[auto, labelsize] {} (01);  
      \draw [->] (10) to node[auto, swap, labelsize] {$P$} (11); 
      \draw [->] (00) to node[auto, swap, labelsize] {$P^\ast \prod_PF$} (10); 
      \draw [->] (01) to node[auto, labelsize] {$\prod_PF$} (11);
      \draw (0.2,0.25) to (0.5,0.25) to (0.5,0.55);
\end{tikzpicture}
\end{equation*}
in $\VDbl$.
Explicitly, the virtual double category $P^\ast \prod_P\vdcat{A}$ can be described as follows. 
\begin{itemize}
    \item An object in $P^\ast \prod_P\vdcat{A}$ is a triple $(Y,\vect{A},\dot{X})$ consisting of an object $(Y,\vect{A})$ in $\prod_P\vdcat{A}$ and an object $\dot{X}\in \vdcat{X}$ satisfying $(\prod_PF)(Y,\vect{A})=P\dot{X}$, or equivalently, $\dot{X}\in P^{-1}(Y)$. 
    \item A vertical morphism in $P^\ast \prod_P\vdcat{A}$ from $(Y,\vect{A},\dot{X})$ to $(Y',\vect{A}',\dot{X}')$ is a triple $(v,\vect{f},u)$ consisting of a vertical morphism $(v,\vect{f})\colon (Y,\vect{A})\to (Y',\vect{A}')$ in $\prod_P\vdcat{A}$ and a vertical morphism $u\colon \dot{X}\to\dot{X}'$ in $\vdcat{X}$ satisfying $(\prod_PF)(v,\vect{f})=Pu$. Notice that we necessarily have $u=\overline v_{\dot{X}}$, and hence a vertical morphism in $P^\ast \prod_P\vdcat{A}$ from $(Y,\vect{A},\dot{X})$ to $(Y',\vect{A}',\dot{X}')$ can be equivalently given by a vertical morphism $(v,\vect{f})\colon (Y,\vect{A})\to (Y',\vect{A}')$ in $\prod_P\vdcat{A}$ satisfying $v_\ast \dot{X} = \dot{X}'$. 
    \item A horizontal morphism in $P^\ast \prod_P\vdcat{A}$ from $(Y,\vect{A},\dot{X})$ to $(Y',\vect{A}',\dot{X}')$ is a triple $(\Psi,\vect{\Gamma},\dot{\Phi})$ consisting of a horizontal morphism $(\Psi,\vect{\Gamma})\colon (Y,\vect{A})\pto (Y',\vect{A}')$ in $\prod_P\vdcat{A}$ and a horizontal morphism $\dot{\Phi}\colon \dot{X}\pto \dot{X}'$ in $\vdcat{X}$ satisfying $(\prod_PF)(\Psi,\vect{\Gamma})=P\dot{\Phi}$, or equivalently, $\dot{\Phi}\in P^{-1}(\Psi)$.
    \item A multicell in $P^\ast\prod_P\vdcat{A}$ of type
        \begin{equation}
\label{eqn:multicell-in-PastProdP}
\begin{tikzpicture}[baseline=-\the\dimexpr\fontdimen22\textfont2\relax ]
      \node(00) at (0,0.75) {$(Y_0,\vect{A}_0,\dot{X}_0)$};
      \node(01) at (3.5,0.75) {$(Y_1,\vect{A}_1,\dot{X}_1)$};
      \node(02) at (6.5,0.75) {$\cdots$};
      \node(03) at (9.5,0.75) {$(Y_n,\vect{A}_n,\dot{X}_n)$};
      \node(10) at (0,-0.75) {$(Y,\vect{A},\dot{X})$};
      \node(11) at (9.5,-0.75) {$(Y',\vect{A}',\dot{X}')$}; 
      \draw [pto] (00) to node[auto, labelsize] {$(\Psi_1,\vect{\Gamma}_1,\dot{\Phi}_1)$} (01);    
      \draw [pto] (01) to node[auto, labelsize] {$(\Psi_2,\vect{\Gamma}_2,\dot{\Phi}_2)$} (02);   
      \draw [pto] (02) to node[auto, labelsize] {$(\Psi_n,\vect{\Gamma}_n,\dot{\Phi}_n)$} (03);  
      \draw [pto] (10) to node[auto, swap, labelsize] {$(\Psi,\vect{\Gamma},\dot{\Phi})$} (11); 
      \draw [->] (00) to node[auto, swap, labelsize] {$(v,\vect{f},u)$} (10); 
      \draw [->] (03) to node[auto,labelsize] {$(v',\vect{f}',u)$} (11);
      \node at (4.75,0) {$(\nu,\vect{\alpha},\chi)$}; 
\end{tikzpicture}
\end{equation}
    consists of a multicell \eqref{eqn:multicell-in-ProdP} in $\prod_P\vdcat{A}$ and a multicell 
    \begin{equation*}
\begin{tikzpicture}[baseline=-\the\dimexpr\fontdimen22\textfont2\relax ]
      \node(00) at (0,0.75) {$\dot{X}_0$};
      \node(01) at (1.5,0.75) {$\dot{X}_1$};
      \node(02) at (3,0.75) {$\cdots$};
      \node(03) at (4.5,0.75) {$\dot{X}_n$};
      \node(10) at (0,-0.75) {$v_\ast \dot{X}_0$};
      \node(11) at (4.5,-0.75) {$v'_\ast \dot{X}_n$}; 
      \draw [pto] (00) to node[auto, labelsize] {$\dot{\Phi}_1$} (01);  
      \draw [pto] (01) to node[auto, labelsize] {$\dot{\Phi}_2$} (02);  
      \draw [pto] (02) to node[auto, labelsize] {$\dot{\Phi}_n$} (03);  
      \draw [pto] (10) to node[auto, swap, labelsize] {$\dot{\Phi}$} (11); 
      \draw [->] (00) to node[auto, swap, labelsize] {$\overline v_{\dot{X}_0}$} (10); 
      \draw [->] (03) to node[auto,labelsize] {$\overline{v'}_{\dot{X}_n}$} (11);
      \node at (2.25,0) {$\chi$}; 
\end{tikzpicture}
\end{equation*}
    in $\vdcat{X}$ satisfying $(\prod_PF)(\nu,\vect{\alpha})=P\chi$. Notice that we necessarily have $\chi=\overline \nu_{\dot{\Phi}_1,\dots,\dot{\Phi}_n}$, and hence the multicell $(\nu,\vect{\alpha},\chi)$ can be equivalently given by the multicell $(\nu,\vect{\alpha})$ in $\prod_P\vdcat{A}$ satisfying $\nu_\ast (\dot{\Phi}_1,\dots,\dot{\Phi}_n)=\dot{\Phi}$. 
\end{itemize}
The virtual double functor $P^\ast \prod_PF\colon P^\ast\prod_P\vdcat{A}\to \vdcat{X}$ is given by the third projection. 

Next we define the morphism $\varepsilon_{F}\colon (P^\ast\prod_PF\colon P^\ast \prod_P\vdcat{A}\to \vdcat{X})\to (F\colon \vdcat{A}\to \vdcat{X})$ in $\VDbl/\vdcat{X}$, i.e., a virtual double functor $\varepsilon_F\colon P^\ast\prod_P\vdcat{A}\to \vdcat{A}$ making the diagram 
\begin{equation*}
\begin{tikzpicture}[baseline=-\the\dimexpr\fontdimen22\textfont2\relax ]
      \node(00) at (0,0.75) {$P^\ast \prod_P\vdcat{A}$};
      \node(01) at (2,0.75) {$\vdcat{A}$};
      \node(10) at (1,-0.75) {$\vdcat{X}$};
      \draw [->] (00) to node[auto, labelsize] {$\varepsilon_F$} (01);   
      \draw [->] (00) to node[auto, swap, labelsize] {$P^\ast \prod_PF$} (10); 
      \draw [->] (01) to node[auto, labelsize] {$F$} (10);
\end{tikzpicture}
\end{equation*}
commute. 
\begin{itemize}
    \item An object $(Y,\vect{A},\dot{X})\in P^\ast\prod_P\vdcat{A}$ is mapped by $\varepsilon_F$ to $A_{\dot{X}}\in\vdcat{A}$. 
    \item A vertical morphism $(v,\vect{f},u)\colon (Y,\vect{A},\dot{X})\to (Y',\vect{A}',\dot{X}')$ in $P^\ast \prod_P\vdcat{A}$ is mapped by $\varepsilon_F$ to $f_{\dot{X}}\colon A_{\dot{X}}\to A'_{v_\ast \dot{X}}=A'_{\dot{X}'}$ in $\vdcat{A}$.
    \item A horizontal morphism $(\Psi,\vect{\Gamma},\dot{\Phi})\colon (Y,\vect{A},\dot{X})\pto (Y',\vect{A}',\dot{X}')$ in $P^\ast \prod_P\vdcat{A}$ is mapped by $\varepsilon_F$ to $\Gamma_{\dot{\Phi}}\colon A_{\dot{X}}\pto A'_{\dot{X}'}$ in $\vdcat{A}$. 
    \item A multicell \eqref{eqn:multicell-in-PastProdP} in $P^\ast \prod_P\vdcat{A}$ is mapped by $\varepsilon_F$ to 
    \begin{equation*}
\begin{tikzpicture}[baseline=-\the\dimexpr\fontdimen22\textfont2\relax ]
      \node(00) at (0,0.75) {$A_{0,\dot{X}_0}$};
      \node(01) at (2,0.75) {$A_{1,\dot{X}_1}$};
      \node(02) at (4,0.75) {$\cdots$};
      \node(03) at (6,0.75) {$A_{n,\dot{X}_n}$};
      \node(10) at (0,-0.75) {$A_{\dot{X}}$};
      \node(11) at (6,-0.75) {$A'_{\dot{X}'}$}; 
      \draw [pto] (00) to node[auto, labelsize] {${\Gamma}_{1,\dot{\Phi}_1}$} (01);  
      \draw [pto] (01) to node[auto, labelsize] {${\Gamma}_{2,\dot{\Phi}_2}$} (02);  
      \draw [pto] (02) to node[auto, labelsize] {${\Gamma}_{n,\dot{\Phi}_n}$} (03);  
      \draw [pto] (10) to node[auto, swap, labelsize] {${\Gamma}_{\dot{\Phi}}$} (11); 
      \draw [->] (00) to node[auto, swap, labelsize] {$f_{\dot{X}_0}$} (10); 
      \draw [->] (03) to node[auto,labelsize] {$f'_{\dot{X}_n}$} (11);
      \node at (3,0) {$\alpha_{\dot{\Phi}_1,\dots,\dot{\Phi}_n}$}; 
\end{tikzpicture}
\end{equation*}
in $\vdcat{A}$.
\end{itemize}

Now it is straightforward to check that \eqref{eqn:composite-epsilon-powerful} is an isomorphism of categories.
\end{proof}

In a forthcoming paper \cite{TCat-Conduche}, we give a far larger class of virtual double functors which are powerful. In fact we work in the more general setting of $T$-categories for a monad $T$, subject to various conditions. As recalled in \cref{rmk:vdbl-cat-as-T-cat}, virtual double categories correspond to the case where $T$ is the monad on $\Gph$ whose Eilenberg--Moore category is $\Cat$.

\section{The polynomial 2-functor \texorpdfstring{$\MMat$}{Mat}}
\label{sec:MMat}
In this section, we define the $2$-functor $\MMat\colon \VDbl\to \VDbl$ which appears as the first factor in the decomposition \eqref{eqn:Enr-decomposition} of $\Enr\colon \VDbl\to\twoCAT$.

First observe that the $2$-functor $(-)_\vc\colon \VDbl\to\CAT$, mapping each virtual double category $\vdcat{A}$ to its vertical category $\vdcat{A}_\vc$, has a right $2$-adjoint:
\begin{equation*}
\begin{tikzpicture}[baseline=-\the\dimexpr\fontdimen22\textfont2\relax ]
      \node(0) at (0,0) {$\VDbl$};
      \node(0a)at (4,0) {$\CAT$.};
      \draw [->, transform canvas={yshift=5}] (0) to node[auto, labelsize] {$(-)_\vc$}  (0a);
      \draw [->, transform canvas={yshift=-5}] (0a) to node[auto, labelsize] {$(-)_\hc$}  (0);
      \path(0) to node[rotate=-90] {$\dashv$} (0a);
\end{tikzpicture}
\end{equation*}
The right $2$-adjoint $(-)_\hc$ maps each category $\cat{C}$ to the virtual double category $\cat{C}_\hc$ whose vertical category is $\cat{C}$, and such that 
\begin{itemize}
    \item for each pair $C,C'$ of objects of $\cat{C}$, there exists precisely one horizontal morphism $C\pto C'$ in $\cat{C}_\hc$, and 
    \item for each configuration 
\begin{equation}
\label{eqn:boundary-of-2-cell-in-C-hc}
\begin{tikzpicture}[baseline=-\the\dimexpr\fontdimen22\textfont2\relax ]
      \node(00) at (0,0.75) {$C_0$};
      \node(01) at (1.5,0.75) {$C_1$};
      \node(02) at (3,0.75) {$\cdots$};
      \node(03) at (4.5,0.75) {$C_n$};
      \node(10) at (0,-0.75) {$C$};
      \node(11) at (4.5,-0.75) {$C'$}; 
      \draw [pto] (00) to node[auto, labelsize] {} (01);  
      \draw [pto] (01) to node[auto, labelsize] {} (02);  
      \draw [pto] (02) to node[auto, labelsize] {} (03);  
      \draw [pto] (10) to node[auto, swap, labelsize] {} (11); 
      \draw [->] (00) to node[auto, swap, labelsize] {$f$} (10); 
      \draw [->] (03) to node[auto,labelsize] {$f'$} (11);
\end{tikzpicture}    
\end{equation}
    in $\cat{C}_\hc$, there exists precisely one multicell having \cref{eqn:boundary-of-2-cell-in-C-hc} as the boundary. 
\end{itemize}
We call a virtual double category \emph{horizontally chaotic} if it is in the essential image of $(-)_\hc$.

\begin{remark}
    Although we shall not use this fact, the $2$-functor $(-)_\vc$ also has a left $2$-adjoint $(-)_\he\colon \CAT\to \VDbl$, mapping each category $\cat{C}$ to the ``horizontally empty'' virtual double category $\cat{C}_\he$ whose vertical category is $\cat{C}$, and such that there is no horizontal morphisms (and hence no multicells) in $\cat{C}_\he$. Note that we have $(-)_\vc\cong \VDbl(1_\he,-)$, where $1$ denotes the terminal category.

    The composite 
    \begin{equation*}
\begin{tikzpicture}[baseline=-\the\dimexpr\fontdimen22\textfont2\relax ]
      \node(0) at (0,0) {$\VDbl$};
      \node(1)at (2,0) {$\CAT$};
      \node(2) at (4,0) {$\VDbl$};
      \draw [->] (0) to node[auto, labelsize] {$(-)_\vc$}  (1);
      \draw [->] (1) to node[auto,labelsize] {$(-)_\he$}  (2);
\end{tikzpicture}
\end{equation*}
    coincides with the product $1_\he\times(-)\colon\VDbl\to\VDbl$ with $1_\he$, and hence its right $2$-adjoint, namely the composite 
    \begin{equation*}
\begin{tikzpicture}[baseline=-\the\dimexpr\fontdimen22\textfont2\relax ]
      \node(0) at (0,0) {$\VDbl$};
      \node(1)at (2,0) {$\CAT$};
      \node(2) at (4,0) {$\VDbl$,};
      \draw [->] (0) to node[auto, labelsize] {$(-)_\vc$}  (1);
      \draw [->] (1) to node[auto,labelsize] {$(-)_\hc$}  (2);
\end{tikzpicture}
\end{equation*}
    is the exponentiation $(-)^{1_\he}\colon \VDbl\to\VDbl$ by $1_\he$. (Although $\VDbl$ is not cartesian closed, the object $1_\he$ is powerful. In fact, for any $\cat{C}\in\CAT$, the object $\cat{C}_\he\in\VDbl$ is powerful in $\VDbl$, as the $2$-functor $\cat{C}_\he\times (-)\colon\VDbl\to \VDbl$ is the composite 
    \begin{equation*}
\begin{tikzpicture}[baseline=-\the\dimexpr\fontdimen22\textfont2\relax ]
      \node(0) at (0,0) {$\VDbl$};
      \node(1) at (2,0) {$\CAT$};
      \node(2) at (4,0) {$\CAT$};
      \node(3) at (6,0) {$\VDbl$};
      \draw [->] (0) to node[auto, labelsize] {$(-)_\vc$}  (1);
      \draw [->] (1) to node[auto,labelsize] {$\cat{C}\times (-)$}  (2);
      \draw [->] (2) to node[auto,labelsize] {$(-)_\he$}  (3);
\end{tikzpicture}
\end{equation*}
    of left $2$-adjoints.)
\end{remark}

It is easy to see that $(-)_\hc$ sends discrete opfibrations between categories to discrete opfibrations between virtual double categories (in the sense of \cref{def:disc-opfib}).
Let $\Set_\ast$ be the category of small pointed sets and $P\colon \Set_\ast \to \Set$ the forgetful functor. Then $P$ is a discrete opfibration.
This gives rise to a discrete opfibration $P_\hc\colon (\Set_\ast)_\hc\to \Set_\hc$ between virtual double categories.
By \cref{prop:disc-opfib-powerful}, we obtain $2$-functors 
\[
\begin{tikzpicture}[baseline=-\the\dimexpr\fontdimen22\textfont2\relax ]
      \node(0) at (0,0)  {$\VDbl$};
      \node(1) at (4,0)  {$\VDbl/(\Set_\ast)_\hc$};
      \node(2) at (8,0)  {$\VDbl/\Set_\hc$};
      \node(3) at (12,0)  {$\VDbl$,};
      \draw [->] (0) to node[auto,labelsize] {$(\Set_\ast)_\hc\times (-)$}  (1);
      \draw [->] (1) to node[auto,labelsize] {$\prod_{P_\hc}$}  (2);
      \draw [->] (2) to node[auto,labelsize] {forgetful}  (3);
\end{tikzpicture}
\]
whose composite is denoted by $\MMat\colon \VDbl\to\VDbl$.
In other words, $\MMat$ is the polynomial $2$-functor $\mathcal{P}(P_\hc)$ (see \cref{subsec:poly}), induced by the polynomial \eqref{eqn:polynomial-for-Mat}. 

The construction in the proof of \cref{prop:disc-opfib-powerful} allows us to describe the $2$-functor $\MMat$ explicitly:
it maps each $\vdcat{A}\in\VDbl$ to $\enMMat{\vdcat{A}}\in\VDbl$, which is the virtual double category described as follows.
\begin{itemize}
    \item The vertical category of $\enMMat{\vdcat{A}}$ is the category $\Fam(\vdcat{A}_\vc)$ of families of the vertical category $\vdcat{A}_\vc$ of $\vdcat{A}$. Hence an object in $\enMMat{\vdcat{A}}$ is a pair $(I,\vect{A})$ of a small set $I$ and an $I$-indexed family $\vect{A}=(A_i)_{i\in I}$ of objects in $\vdcat{A}$, while a vertical morphism in $\enMMat{\vdcat{A}}$ from $(I,\vect{A})$ to $(I',\vect{A}')$ is a pair $(v,\vect{f})$ of a function $v\colon I\to I'$ and a family $\vect{f}=(f_i\colon A_i\to A'_{v(i)})_{i\in I}$ of vertical morphisms in $\vdcat{A}$. 
    \item A horizontal morphism in $\enMMat{\vdcat{A}}$ from $(I,\vect{A})$ to $(I',\vect{A}')$ is a family (or a \emph{matrix}) $\vect{\Gamma}=(\Gamma_{i,i'}\colon A_i\pto A'_{i'})_{i\in I, i'\in I'}$ of horizontal morphisms in $\vdcat{A}$. 
    \item A multicell in $\enMMat{\vdcat{A}}$ of type 
\begin{equation}
\label{eqn:multicell-in-MatA}
\begin{tikzpicture}[baseline=-\the\dimexpr\fontdimen22\textfont2\relax ]
      \node(00) at (0,0.75) {$(I_0,\vect{A}_0)$};
      \node(01) at (2.5,0.75) {$(I_1,\vect{A}_1)$};
      \node(02) at (5,0.75) {$\cdots$};
      \node(03) at (7.5,0.75) {$(I_n,\vect{A}_n)$};
      \node(10) at (0,-0.75) {$(I,\vect{A})$};
      \node(11) at (7.5,-0.75) {$(I',\vect{A}')$};
      \draw [pto] (00) to node[auto, labelsize] {$\vect{\Gamma}_1$} (01);
      \draw [pto] (01) to node[auto, labelsize] {$\vect{\Gamma}_2$} (02);
      \draw [pto] (02) to node[auto, labelsize] {$\vect{\Gamma}_n$} (03);
      \draw [pto] (10) to node[auto, swap, labelsize] {$\vect{\Gamma}$} (11);
      \draw [->] (00) to node[auto, swap, labelsize] {$(v,\vect{f})$} (10); 
      \draw [->] (03) to node[auto,labelsize] {$(v',\vect{f}')$} (11);
      \node at (3.75,0) {$\vect{\alpha}$};
\end{tikzpicture}
\end{equation}
is a family
\begin{equation*}
\vect{\alpha}\quad =\quad \left(
\begin{tikzpicture}[baseline=-\the\dimexpr\fontdimen22\textfont2\relax ]
      \node(00) at (0,0.75) {$A_{0,i_0}$};
      \node(01) at (2,0.75) {$A_{1,i_1}$};
      \node(02) at (4,0.75) {$\cdots$};
      \node(03) at (6,0.75) {$A_{n,i_n}$};
      \node(10) at (0,-0.75) {$A_{v(i_0)}$};
      \node(11) at (6,-0.75) {$A'_{v'(i_n)}$}; 
      \draw [pto] (00) to node[auto, labelsize] {${\Gamma}_{1,i_0,i_1}$} (01);  
      \draw [pto] (01) to node[auto, labelsize] {${\Gamma}_{2,i_1,i_2}$} (02);  
      \draw [pto] (02) to node[auto, labelsize] {${\Gamma}_{n,i_{n-1},i_n}$} (03);  
      \draw [pto] (10) to node[auto, swap, labelsize] {${\Gamma}_{v(i_0),v'(i_n)}$} (11); 
      \draw [->] (00) to node[auto, swap, labelsize] {$f_{i_0}$} (10); 
      \draw [->] (03) to node[auto,labelsize] {$f'_{i_n}$} (11);
      \node at (3,0) {$\alpha_{i_0,\dots,i_n}$}; 
\end{tikzpicture}
\right)_{i_0\in I_0,i_1\in I_1,\dots, i_n\in I_n}
\end{equation*}
of multicells in $\vdcat{A}$.
\end{itemize}
The virtual double category structure on $\enMMat{\vdcat{A}}$ is evident.

\begin{theorem}\label{prop:MMat}
    The $2$-functor $\MMat\colon \VDbl\to \VDbl$ is familial.
\end{theorem}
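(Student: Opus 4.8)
The plan is to obtain this as an immediate application of \cref{prop:poly-familial}. Recall that $\MMat$ was constructed as the polynomial $2$-functor $\mathcal{P}(P_\hc)$ attached to the polynomial \eqref{eqn:polynomial-for-Mat} in $\VDbl$, whose only non-trivial leg is the virtual double functor $P_\hc\colon(\Set_\ast)_\hc\to\Set_\hc$. Since $\VDbl$ has finite limits --- it has a terminal object and $2$-pullbacks, both used freely above --- \cref{prop:poly-familial} will tell us that $\mathcal{P}(P_\hc)$ is familial as soon as we know that $P_\hc$ is at once powerful and a representable discrete opfibration in $\VDbl$. So these are the two points to verify.

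Both reduce to the fact that $P_\hc$ is a discrete opfibration of virtual double categories in the strong sense of \cref{def:disc-opfib}. Indeed, the forgetful functor $P\colon\Set_\ast\to\Set$ is a discrete opfibration of categories, and, as noted above, the right $2$-adjoint $(-)_\hc$ carries discrete opfibrations of categories to discrete opfibrations of virtual double categories in the sense of \cref{def:disc-opfib}; hence $P_\hc$ is one. Powerfulness is then exactly \cref{prop:disc-opfib-powerful}, while \cref{dof-between-vdbl-is-internal-dof} says that $P_\hc$ is moreover a representable discrete opfibration in $\VDbl$. Applying \cref{prop:poly-familial} to $p=P_\hc$ now gives that $\MMat=\mathcal{P}(P_\hc)$ is familial.

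I do not expect a genuine obstacle here: the substantive work was done in \cref{sec:dof}, which supplies powerfulness of discrete opfibrations (this is what makes $\prod_{P_\hc}$, hence $\MMat$ itself, exist), and in the earlier identification of $P_\hc$ as a discrete opfibration in the sense of \cref{def:disc-opfib} rather than merely a representable one. If one wanted a hands-on account instead, the split fibration structure on the map $\MMat(\vdcat{A})\to\MMat(1)$ induced by $!_{\vdcat{A}}$ can be read straight off the explicit description of $\enMMat{\vdcat{A}}$ above: a chosen cartesian lift reindexes the family $\vect{A}$ along the relevant function of index sets and leaves the matrices of horizontal morphisms and the families of multicells unchanged, and $\MMat(F)$ visibly preserves such lifts for every virtual double functor $F$ --- but this is exactly what \cref{prop:poly-familial} packages for us.
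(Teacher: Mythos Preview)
Your proposal is correct and is essentially the paper's own proof: the paper derives the result from the construction of $\MMat$ as the polynomial $2$-functor $\mathcal{P}(P_\hc)$ together with \cref{prop:poly-familial} and \cref{dof-between-vdbl-is-internal-dof}, exactly as you do. Your additional remarks about finite limits in $\VDbl$ and the hands-on description of cartesian lifts are fine but not needed.
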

\begin{proof}
    This follows from the construction of $\MMat$ as a polynomial 2-functor given above along with \cref{prop:poly-familial,dof-between-vdbl-is-internal-dof}.
\end{proof}

\section{Horizontal units in a virtual double category}\label{sec:units}
In this section, we review the notion of horizontal unit in a virtual double category \cite[Section~5]{Cruttwell-Shulman-unified} and construct various $2$-adjunctions involving them. 

\begin{definition}
    Let $\vdcat{D}$ be a virtual double category and $D$ an object in $\vdcat{D}$. A \emph{horizontal unit} at $D$ consists of 
    \begin{itemize}
        \item a horizontal morphism $\hu_D\colon D\pto D$ in $\vdcat{D}$, and 
        \item a multicell 
\begin{equation*}
\begin{tikzpicture}[baseline=-\the\dimexpr\fontdimen22\textfont2\relax ]
      \node(00) at (0.75,0.75) {$D$};
      \node(10) at (0,-0.75) {$D$};
      \node(11) at (1.5,-0.75) {$D$}; 
      \draw [pto] (10) to node[auto, swap, labelsize] {$\hu_D$} (11); 
      \draw [double equal sign distance] (00) to (10); 
      \draw [double equal sign distance] (00) to (11);
      \node at (0.75,-0.2) {$\widetilde\eta_D$}; 
\end{tikzpicture}
\end{equation*}
    in $\vdcat{D}$, 
    \end{itemize}
    such that $\widetilde\eta_D$ is \emph{opcartesian}. This last statement means that for each multicell 
\begin{equation*}
\begin{tikzpicture}[baseline=-\the\dimexpr\fontdimen22\textfont2\relax ]
      \node(00) at (0,0.75) {$D_0$};
      \node(01) at (1.5,0.75) {$D_1$};
      \node(02) at (3,0.75) {$\cdots$};
      \node(03) at (4.5,0.75) {$D_n$};
      \node(10) at (0,-0.75) {$E$};
      \node(11) at (4.5,-0.75) {$E'$}; 
      \draw [pto] (00) to node[auto, labelsize] {$\Gamma_1$} (01);  
      \draw [pto] (01) to node[auto, labelsize] {$\Gamma_2$} (02);  
      \draw [pto] (02) to node[auto, labelsize] {$\Gamma_n$} (03);  
      \draw [pto] (10) to node[auto, swap, labelsize] {$\Delta$} (11); 
      \draw [->] (00) to node[auto, swap, labelsize] {$u$} (10); 
      \draw [->] (03) to node[auto,labelsize] {$u'$} (11);
      \node at (2.25,0) {$\theta$}; 
\end{tikzpicture}
\end{equation*}
and each $0\leq i\leq n$ with $D_i=D$, there exists a unique multicell 
\begin{equation*}
\begin{tikzpicture}[baseline=-\the\dimexpr\fontdimen22\textfont2\relax ]
      \node(00) at (0,0.75) {$D_0$};
      \node(01) at (1.5,0.75) {$\cdots$};
      \node(02) at (3,0.75) {$D_{i-1}$};
      \node(03) at (4.5,0.75) {$D$};
      \node(04) at (6,0.75) {$D$};
      \node(05) at (7.5,0.75) {$D_{i+1}$};
      \node(06) at (9,0.75) {$\cdots$};
      \node(07) at (10.5,0.75) {$D_n$};
      \node(10) at (0,-0.75) {$E$};
      \node(11) at (10.5,-0.75) {$E'$}; 
      \draw [pto] (00) to node[auto, labelsize] {$\Gamma_1$} (01);  
      \draw [pto] (01) to node[auto, labelsize] {$\Gamma_{i-1}$} (02);  
      \draw [pto] (02) to node[auto, labelsize] {$\Gamma_{i}$} (03); \draw [pto] (03) to node[auto, labelsize] {$\hu_D$} (04);
      \draw [pto] (04) to node[auto, labelsize] {$\Gamma_{i+1}$} (05);  
      \draw [pto] (05) to node[auto, labelsize] {$\Gamma_{i+2}$} (06);  
      \draw [pto] (06) to node[auto, labelsize] {$\Gamma_{n}$} (07);  
      \draw [pto] (10) to node[auto, swap, labelsize] {$\Delta$} (11); 
      \draw [->] (00) to node[auto, swap, labelsize] {$u$} (10); 
      \draw [->] (07) to node[auto,labelsize] {$u'$} (11);
      \node at (5.25,0) {$\theta'$}; 
\end{tikzpicture}
\end{equation*}
in $\vdcat{D}$ such that
\[
    \begin{tikzpicture}[baseline=-\the\dimexpr\fontdimen22\textfont2\relax ]
      \node(20) at (0.75,1.5) {$D_0$};
      \node(21) at (2.25,1.5) {$\cdots$};
      \node(23) at (3.75,1.5) {$D$};
      \node(24) at (5.25,1.5) {$\cdots$};
      \node(25) at (6.75,1.5) {$D_n$};
      \node(00) at (0,0) {$D_0$};
      \node(01) at (1.5,0) {$\cdots$};
      \node(02) at (3,0) {$D$};
      \node(03) at (4.5,0) {$D$};
      \node(04) at (6,0) {$\cdots$};
      \node(05) at (7.5,0) {$D_n$};
      \node(10) at (0,-1.5) {$E$};
      \node(11) at (7.5,-1.5) {$E'$}; 
      \draw [pto] (20) to node[auto, labelsize] {$\Gamma_1$} (21);
      \draw [pto] (21) to node[auto, labelsize] {$\Gamma_{i}$} (23);
      \draw [pto] (23) to node[auto, labelsize] {$\Gamma_{i+1}$} (24);
      \draw [pto] (24) to node[auto, labelsize] {$\Gamma_n$} (25);
      \draw [pto] (00) to node[auto, swap,labelsize] {$\Gamma_1$} (01);
      \draw [pto] (01) to node[auto, swap,labelsize] {$\Gamma_{i}$} (02);
      \draw [pto] (02) to node[auto, swap,labelsize] {$\hu_{D}$} (03);
      \draw [pto] (03) to node[auto, swap,labelsize] {$\Gamma_{i+1}$} (04);
      \draw [pto] (04) to node[auto, swap,labelsize] {$\Gamma_n$} (05);
      \draw [pto] (10) to node[auto, swap, labelsize] {$\Delta$} (11); 
      \draw [double equal sign distance] (20) to (00); 
      \draw [double equal sign distance] (23) to (02); 
      \draw [double equal sign distance] (23) to (03); 
      \draw [double equal sign distance] (25) to (05); 
      \draw [->] (00) to node[auto, swap, labelsize] {$u$} (10); 
      \draw [->] (05) to node[auto,labelsize] {$u'$} (11);
      \node at (3.75,0.55) {$\widetilde\eta_D$};
      \node at (3.75,-0.75) {$\theta'$}; 
\end{tikzpicture}\quad=\quad\theta.
\]

By a \emph{unital virtual double category}, we mean a virtual double category $\vdcat{D}$ equipped with a chosen family 
$\bigl((\hu_D,\widetilde\eta_D)\bigr)_{D\in\vdcat{D}}$ of horizontal units for each object $D$ in $\vdcat{D}$.

Let $\vdcat{D}$ and $\vdcat{E}$ be unital virtual double categories. 
A virtual double functor $F\colon \vdcat{D}\to\vdcat{E}$ is \emph{unital} if it preserves the chosen horizontal units strictly, i.e., if we have $F\hu_D^{\vdcat{D}}=\hu_{FD}^\vdcat{E}$ and $F\widetilde\eta_D^{\vdcat{D}}=\widetilde\eta_{FD}^{\vdcat{E}}$ for each object $D\in \vdcat{D}$.
\end{definition}

We have a 2-category $\VDbln$ whose objects are large unital virtual double categories, whose morphisms are unital virtual double functors, and whose 2-cells are arbitrary vertical natural transformations. We have the evident forgetful 2-functor $\UU\colon \VDbln\to\VDbl$.

In the rest of this section, we show that $\UU$ has both left and right $2$-adjoints, and that there is a $2$-adjunction between $\VDbln$ and $\twoCAT$:
\[
\begin{tikzpicture}[baseline=-\the\dimexpr\fontdimen22\textfont2\relax ]
      \node(0) at (0,0) {$\VDbl$};
      \node(1) at (4,0) {$\VDbln$};
      \node(2) at (8,0) {$\twoCAT$.};
      \draw [->, bend left = 20] (0) to node[auto, labelsize] {$\FF$}  (1);  
      \draw [->] (1) to node[midway,fill=white, labelsize] {$\UU$}  (0); 
      \draw [->, bend right = 20] (0) to node[auto, swap,labelsize] {$\MMod$}  (1); 
      \path(0) to node[rotate=-90, transform canvas={yshift=8}] {$\dashv$} (1);
      \path(0) to node[rotate=-90, transform canvas={yshift=-8}] {$\dashv$} (1);
    \draw [->, transform canvas={yshift=5}] (1) to node[auto, labelsize] {$\V$}  (2);  
    \draw [<-,transform canvas={yshift=-5}] (1) to node[auto, swap,labelsize] {$\VV$}  (2);  
    \path(1) to node[rotate=-90] {$\dashv$} (2);
\end{tikzpicture}
\]

\begin{remark}
    Just as the virtual double categories can be identified with the $T$-categories where $T$ is the free category monad on the category $\mathbf{Gph}$ of graphs (see \cref{rmk:vdbl-cat-as-T-cat}), the unital virtual double categories are the $S$-categories where $S$ is the (non-cartesian) free category monad on the category $\mathbf{RGph}$ of reflexive graphs; see \cite{Fujii-Lack-nerve}.
\end{remark}

\subsection{The 2-adjunction $\UU\dashv \MMod$}
We start with the 2-adjunction
\begin{equation}\label{eqn:U-Mod-adjunction}
\begin{tikzpicture}[baseline=-\the\dimexpr\fontdimen22\textfont2\relax ]
      \node(0) at (0,0) {$\VDbl$};
      \node(1) at (4,0) {$\VDbln$,};
      \draw [<-, transform canvas={yshift=5}] (0) to node[auto, labelsize] {$\UU$}  (1);  
      \draw [<-, transform canvas={yshift=-5}] (1) to node[auto, labelsize] {$\MMod$}  (0); 
      \path(0) to node[rotate=-90] {$\dashv$} (1);
\end{tikzpicture}
\end{equation}
where $\UU$ is the forgetful 2-functor.
This is a variant of \cite[Proposition~5.14]{Cruttwell-Shulman-unified}.
We establish \cref{eqn:U-Mod-adjunction} by constructing the $2$-functor $\MMod$, the unit $\Inc\colon 1_{\VDbln}\to \MMod\UU$, and the counit
$\Und\colon \UU\MMod\to 1_{\VDbl}$.

For each virtual double category $\vdcat{A}$, the virtual double category $\MMod(\vdcat{A})$ is defined in \cite[Definition~5.3.1]{Leinster-HOHC},\footnote{As pointed out in \cite[footnote~3]{Arkor-McDermott-nerve}, the definition of nullary multicell in $\MMod(\vdcat{A})$ in \cite[Definition~5.3.1]{Leinster-HOHC} is incomplete.} \cite[Definition~2.8]{Cruttwell-Shulman-unified}, and \cite[Definition~3.9]{Arkor-McDermott-nerve}. 
An object in $\MMod(\vdcat{A})$ is a \emph{horizontal monad} $\mnd{T}=(A,T,\eta,\mu)$ in $\vdcat{A}$, consisting of an object $A$, a horizontal morphism $T\colon A\pto A$, and suitable multicells $\eta$ and $\mu$ in $\vdcat{A}$.  
A vertical (resp.\ horizontal) morphism in $\MMod(\vdcat{A})$ is called a \emph{monad morphism} (resp.\ \emph{monad bimodule}) in $\vdcat{A}$.
Given a horizontal monad $\mnd{T}=(A,T,\eta,\mu)$ in $\vdcat{A}$,
the underlying horizontal morphism $T$ of $\mnd{T}$ can be made into a monad bimodule $(T,\mu,\mu)$ from $\mnd{T}$ to $\mnd{T}$ via the multiplication $\mu$ of $\mnd{T}$,
and we choose this (together with the unit $\eta$ of $\mnd{T}$ as the opcartesian cell) as the horizontal unit at the object $\mnd{T}$ in $\MMod(\vdcat{A})$ \cite[Proposition~5.5]{Cruttwell-Shulman-unified}. 
In this way we can make $\MMod(\vdcat{A})$ into an object of $\VDbln$.
The assignment $\vdcat{A}\mapsto \MMod(\vdcat{A})$ routinely extends to a $2$-functor
$\MMod\colon\VDbl\to \VDbln$. 

We define the virtual double functor $\Und_\vdcat{A}\colon \UU\MMod(\vdcat{A})\to\vdcat{A}$ as the evident forgetful functor.
The $2$-naturality of $\Und\colon \UU\MMod\to 1_{\VDbl}$ is straightforward.

We next define a unital virtual double functor $\Inc_\vdcat{D}\colon \vdcat{D}\to \MMod(\UU\vdcat{D})$ for each unital virtual double category $\vdcat{D}$. 

\begin{definition}\label{def:omega-lambda-rho-in-unital-vdbl}
    Let $\vdcat{D}$ be a unital virtual double category.
    \begin{itemize}
        \item For each vertical morphism $u\colon D\to E$ in $\vdcat{D}$, we define the cell
\begin{equation*}
\begin{tikzpicture}[baseline=-\the\dimexpr\fontdimen22\textfont2\relax ]
      \node(00) at (0,0.75) {$D$};
      \node(01) at (1.5,0.75) {$D$};
      \node(10) at (0,-0.75) {$E$};
      \node(11) at (1.5,-0.75) {$E$}; 
      \draw [pto] (00) to node[auto, labelsize] {$\hu_D$} (01);  
      \draw [pto] (10) to node[auto, swap, labelsize] {$\hu_E$} (11); 
      \draw [->] (00) to node[auto, swap, labelsize] {$u$} (10); 
      \draw [->] (01) to node[auto, labelsize] {$u$} (11);
      \node at (0.75,0) {$\widetilde\omega_u$}; 
\end{tikzpicture}
\end{equation*}
in $\vdcat{D}$ as the unique cell such that 
\begin{equation*}
\begin{tikzpicture}[baseline=-\the\dimexpr\fontdimen22\textfont2\relax ]
      \node(20) at (0.75,1.5) {$D$};
      \node(00) at (0,0) {$D$};
      \node(01) at (1.5,0) {$D$};
      \node(10) at (0,-1.5) {$E$};
      \node(11) at (1.5,-1.5) {$E$}; 
      \draw [pto] (00) to node[auto, swap,labelsize] {$\hu_{D}$} (01);
      \draw [pto] (10) to node[auto, swap, labelsize] {$\hu_E$} (11); 
      \draw [double equal sign distance] (20) to (00); 
      \draw [double equal sign distance] (20) to (01); 
      \draw [->] (00) to node[auto, swap, labelsize] {$u$} (10); 
      \draw [->] (01) to node[auto, labelsize] {$u$} (11);
      \node at (0.75,0.5) {$\widetilde\eta_D$};
      \node at (0.75,-0.75) {$\widetilde\omega_u$}; 
\end{tikzpicture}
\quad=\quad
\begin{tikzpicture}[baseline=-\the\dimexpr\fontdimen22\textfont2\relax ]
      \node(20) at (0.75,1.5) {$D$};
      \node(00) at (0.75,0) {$E$};
      \node(10) at (0,-1.5) {$E$};
      \node(11) at (1.5,-1.5) {$E$.}; 
      \draw [pto] (10) to node[auto, swap, labelsize] {$\hu_E$} (11); 
      \draw [double equal sign distance] (00) to (10); 
      \draw [double equal sign distance] (00) to (11); 
      \draw [->] (20) to node[auto,labelsize] {$u$} (00); 
      \node at (0.75,-1) {$\widetilde\eta_E$}; 
\end{tikzpicture}
\end{equation*}
        \item For each horizontal morphism $\Gamma\colon D\pto D'$ in $\vdcat{D}$, we define the multicells 
\begin{equation*}
  \begin{tikzpicture}[baseline=-\the\dimexpr\fontdimen22\textfont2\relax ]
      \node(00) at (0,0.75) {$D$};
      \node(01) at (1.5,0.75) {$D$};
      \node(02) at (3,0.75) {$D'$};
      \node(10) at (0,-0.75) {$D$};
      \node(11) at (3,-0.75) {$D'$}; 
      \draw [pto] (00) to node[auto,labelsize] {$\hu_{D}$} (01);
      \draw [pto] (01) to node[auto,labelsize] {$\Gamma$} (02);
      \draw [pto] (10) to node[auto, swap, labelsize] {$\Gamma$} (11); 
      \draw [double equal sign distance] (00) to (10); 
      \draw [double equal sign distance] (02) to (11);
      \node at (1.5,0) {$\widetilde\lambda_\Gamma$}; 
\end{tikzpicture}
\quad\text{and}\quad
    \begin{tikzpicture}[baseline=-\the\dimexpr\fontdimen22\textfont2\relax ]
      \node(00) at (0,0.75) {$D$};
      \node(01) at (1.5,0.75) {$D'$};
      \node(02) at (3,0.75) {$D'$};
      \node(10) at (0,-0.75) {$D$};
      \node(11) at (3,-0.75) {$D'$}; 
      \draw [pto] (00) to node[auto,labelsize] {$\Gamma$} (01);
      \draw [pto] (01) to node[auto,labelsize] {$\hu_{D'}$} (02);
      \draw [pto] (10) to node[auto, swap, labelsize] {$\Gamma$} (11); 
      \draw [double equal sign distance] (00) to (10); 
      \draw [double equal sign distance] (02) to (11);
      \node at (1.5,0) {$\widetilde\rho_\Gamma$}; 
\end{tikzpicture}
\end{equation*}
in $\vdcat{D}$ as the unique multicells such that 
\begin{equation*}
\begin{tikzpicture}[baseline=-\the\dimexpr\fontdimen22\textfont2\relax ]
      \node(20) at (0.75,1.5) {$D$};
      \node(21) at (2.25,1.5) {$D'$};
      \node(00) at (0,0) {$D$};
      \node(01) at (1.5,0) {$D$};
      \node(02) at (3,0) {$D'$};
      \node(10) at (0,-1.5) {$D$};
      \node(11) at (3,-1.5) {$D'$}; 
      \draw [pto] (20) to node[auto, labelsize] {$\Gamma$} (21);
      \draw [pto] (00) to node[auto, swap,labelsize] {$\hu_{D}$} (01);
      \draw [pto] (01) to node[auto, swap,labelsize] {$\Gamma$} (02);
      \draw [pto] (10) to node[auto, swap, labelsize] {$\Gamma$} (11); 
      \draw [double equal sign distance] (20) to (00); 
      \draw [double equal sign distance] (20) to (01); 
      \draw [double equal sign distance] (21) to (02); 
      \draw [double equal sign distance] (00) to (10); 
      \draw [double equal sign distance] (02) to (11);
      \node at (0.75,0.5) {$\widetilde\eta_D$};
      \node at (1.5,-0.75) {$\widetilde\lambda_\Gamma$}; 
\end{tikzpicture}
\quad = \quad \id_\Gamma \quad = \quad
    \begin{tikzpicture}[baseline=-\the\dimexpr\fontdimen22\textfont2\relax ]
      \node(20) at (0.75,1.5) {$D$};
      \node(21) at (2.25,1.5) {$D'$};
      \node(00) at (0,0) {$D$};
      \node(01) at (1.5,0) {$D'$};
      \node(02) at (3,0) {$D'$};
      \node(10) at (0,-1.5) {$D$};
      \node(11) at (3,-1.5) {$D'$.}; 
      \draw [pto] (20) to node[auto, labelsize] {$\Gamma$} (21);
      \draw [pto] (00) to node[auto, swap,labelsize] {$\Gamma$} (01);
      \draw [pto] (01) to node[auto, swap,labelsize] {$\hu_{D'}$} (02);
      \draw [pto] (10) to node[auto, swap, labelsize] {$\Gamma$} (11); 
      \draw [double equal sign distance] (20) to (00); 
      \draw [double equal sign distance] (21) to (01); 
      \draw [double equal sign distance] (21) to (02); 
      \draw [double equal sign distance] (00) to (10); 
      \draw [double equal sign distance] (02) to (11);
      \node at (2.25,0.5) {$\widetilde\eta_{D'}$};
      \node at (1.5,-0.75) {$\widetilde\rho_\Gamma$}; 
\end{tikzpicture}
\end{equation*}
\end{itemize}
\end{definition}

\begin{proposition}\label{defining-Inc}
    Let $\vdcat{D}$ be a unital virtual double category. 
    \begin{enumerate}
        \item For each object $D$ in $\vdcat{D}$, we have $\widetilde\lambda_{\hu_D}=\widetilde\rho_{\hu_D}$; 
        we call this multicell $\widetilde\mu_D$.
        \item For each object $D$ in $\vdcat{D}$, $(D,\hu_D,\widetilde\eta_D,\widetilde\mu_D)$ is a horizontal monad in $\vdcat{D}$; we call this horizontal monad $\Inc_\vdcat{D}(D)$. 
        \item For each vertical morphism $u\colon D\to E$ in $\vdcat{D}$, $(u,\widetilde\omega_u)$ is a monad morphism from $\Inc_\vdcat{D}(D)$ to $\Inc_\vdcat{D}(E)$; we call this monad morphism $\Inc_\vdcat{D}(u)\colon \Inc_\vdcat{D}(D)\to \Inc_\vdcat{D}(E)$. 
        \item For each horizontal morphism $\Gamma\colon D\pto D'$ in $\vdcat{D}$, $(\Gamma,\widetilde\lambda_\Gamma,\widetilde\rho_\Gamma)$ is a monad bimodule from $\Inc_\vdcat{D}(D)$ to $\Inc_\vdcat{D}(D')$; we call this monad bimodule $\Inc_\vdcat{D}(\Gamma)\colon \Inc_\vdcat{D}(D)\pto\Inc_\vdcat{D}(D')$. 
        \item Each multicell 
        \begin{equation*}
\begin{tikzpicture}[baseline=-\the\dimexpr\fontdimen22\textfont2\relax ]
      \node(00) at (0,0.75) {$D_0$};
      \node(01) at (1.5,0.75) {$D_1$};
      \node(02) at (3,0.75) {$\cdots$};
      \node(03) at (4.5,0.75) {$D_n$};
      \node(10) at (0,-0.75) {$E$};
      \node(11) at (4.5,-0.75) {$E'$}; 
      \draw [pto] (00) to node[auto, labelsize] {$\Gamma_1$} (01);  
      \draw [pto] (01) to node[auto, labelsize] {$\Gamma_2$} (02);  
      \draw [pto] (02) to node[auto, labelsize] {$\Gamma_n$} (03);  
      \draw [pto] (10) to node[auto, swap, labelsize] {$\Delta$} (11); 
      \draw [->] (00) to node[auto, swap, labelsize] {$u$} (10); 
      \draw [->] (03) to node[auto,labelsize] {$u'$} (11);
      \node at (2.25,0) {$\theta$}; 
\end{tikzpicture}
\end{equation*}
        in $\vdcat{D}$ becomes a multicell 
\begin{equation*}
\begin{tikzpicture}[baseline=-\the\dimexpr\fontdimen22\textfont2\relax ]
      \node(00) at (0,0.75) {$\Inc_\vdcat{D}(D_0)$};
      \node(01) at (2.5,0.75) {$\Inc_\vdcat{D}(D_1)$};
      \node(02) at (5,0.75) {$\cdots$};
      \node(03) at (7.5,0.75) {$\Inc_\vdcat{D}(D_n)$};
      \node(10) at (0,-0.75) {$\Inc_\vdcat{D}(E)$};
      \node(11) at (7.5,-0.75) {$\Inc_\vdcat{D}(E')$}; 
      \draw [pto] (00) to node[auto, labelsize] {$\Inc_\vdcat{D}(\Gamma_1)$} (01);  
      \draw [pto] (01) to node[auto, labelsize] {$\Inc_\vdcat{D}(\Gamma_2)$} (02);  
      \draw [pto] (02) to node[auto, labelsize] {$\Inc_\vdcat{D}(\Gamma_n)$} (03);  
      \draw [pto] (10) to node[auto, swap, labelsize] {$\Inc_\vdcat{D}(\Delta)$} (11); 
      \draw [->] (00) to node[auto, swap, labelsize] {$\Inc_\vdcat{D}(u)$} (10); 
      \draw [->] (03) to node[auto,labelsize] {$\Inc_\vdcat{D}(u')$} (11);
      \node at (3.75,0) {$\theta$}; 
\end{tikzpicture}
\end{equation*}
        in $\MMod(\vdcat{D})$. 
    \end{enumerate}
\end{proposition}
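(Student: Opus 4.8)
The plan is to derive all five parts from a single cancellation principle furnished by the opcartesianness of the chosen units. For a string $\Gamma_1,\dots,\Gamma_n$ of horizontal morphisms in $\vdcat{D}$ and any slot at which a $\hu_D$ may be inserted, pasting $\widetilde\eta_D$ onto that occurrence of $\hu_D$ is a bijection between multicells with domain $(\Gamma_1,\dots,\Gamma_i,\hu_D,\Gamma_{i+1},\dots,\Gamma_n)$ and multicells with domain $(\Gamma_1,\dots,\Gamma_n)$ (keeping codomain and vertical boundary fixed); hence two multicells whose common domain carries a $\hu_D$ starting at $D$ in a fixed slot coincide as soon as they agree after pasting $\widetilde\eta_D$ onto that slot. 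I would combine this with the three defining identities recalled in \cref{def:omega-lambda-rho-in-unital-vdbl}, namely $\widetilde\lambda_\Gamma\circ(\widetilde\eta_D,\id_\Gamma)=\id_\Gamma$, $\widetilde\rho_\Gamma\circ(\id_\Gamma,\widetilde\eta_{D'})=\id_\Gamma$ and $\widetilde\omega_u\circ(\widetilde\eta_D)=\widetilde\eta_E\circ(u)$, together with the generalized associativity and unit laws for composition of multicells. Philosophically, these identities force the monad, morphism, bimodule and cell structures below to be ``the obvious ones''.

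For (1) I would iterate the principle: a multicell with domain $(\hu_D,\hu_D)$ is determined by the effect of pasting $\widetilde\eta_D$ onto both slots, and a short use of associativity gives $\widetilde\lambda_{\hu_D}\circ(\widetilde\eta_D,\widetilde\eta_D)=\widetilde\eta_D=\widetilde\rho_{\hu_D}\circ(\widetilde\eta_D,\widetilde\eta_D)$, whence $\widetilde\lambda_{\hu_D}=\widetilde\rho_{\hu_D}=:\widetilde\mu_D$. For (2)--(4), the unit axiom of a horizontal monad, the unit axiom of a monad morphism, and the left and right unit axioms of a monad bimodule are literally the three displayed identities, so there is nothing to check for these; for each remaining axiom --- associativity of $\widetilde\mu_D$, compatibility of $\widetilde\omega_u$ with multiplication, and the left associativity, right associativity and compatibility laws of $(\Gamma,\widetilde\lambda_\Gamma,\widetilde\rho_\Gamma)$ --- both sides are multicells whose domain is built from horizontal units and copies of $\Gamma$ and carries a $\hu_D$ (or $\hu_{D'}$) in a boundary slot; pasting the matching $\widetilde\eta$ onto that slot and using the displayed identities with associativity collapses both sides to the same multicell ($\widetilde\mu_D$, $\widetilde\omega_u$, $\widetilde\lambda_\Gamma$ or $\widetilde\rho_\Gamma$ as appropriate), and the principle yields the equality. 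This also confirms that $\Inc_\vdcat{D}$ is well defined on objects, vertical morphisms and horizontal morphisms.

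For (5) I would first unwind what a multicell of $\MMod(\vdcat{D})$ with the stated boundary is: a multicell $\theta$ of the underlying horizontal morphisms of $\vdcat{D}$, subject to equivariance at the left end (relating $\widetilde\lambda_{\Gamma_1}$ pushed through $\theta$ with $\widetilde\lambda_\Delta$ precomposed with $\widetilde\omega_u$ and $\theta$), at the right end (symmetrically, via $\widetilde\rho$ and $\widetilde\omega_{u'}$), and at each interior junction object $D_i$ (acting on an inserted $\hu_{D_i}$ through $\widetilde\rho_{\Gamma_i}$ equals acting through $\widetilde\lambda_{\Gamma_{i+1}}$, both pushed through $\theta$). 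In each case I would paste the appropriate $\widetilde\eta$ onto the inserted unit; using $\widetilde\lambda_\Gamma\circ(\widetilde\eta,\id)=\id$ and $\widetilde\rho_\Gamma\circ(\id,\widetilde\eta)=\id$, and at the ends additionally $\widetilde\omega_u\circ(\widetilde\eta_{D_0})=\widetilde\eta_E\circ(u)$ with the monad unit law for $\widetilde\lambda_\Delta$, both sides reduce to $\theta$ itself, so the principle gives the required equality. Since the passage $\theta\mapsto\theta$ leaves the underlying data untouched, it visibly sends vertical identity cells and composites of multicells in $\vdcat{D}$ to those in $\MMod(\vdcat{D})$.

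The argument is thus uniform and essentially formal; the only genuine obstacle is clerical, namely writing out the definition of a multicell in $\MMod$ (and of a monad bimodule) carefully enough to pair each of its axioms with one of the three defining identities, while keeping track of boundaries and of the placement of the inserted units. (One could alternatively extract (1)--(4) from the proof of \cite[Proposition~5.14]{Cruttwell-Shulman-unified}; we prefer to spell the argument out because it simultaneously fixes the unital virtual double functor $\Inc_\vdcat{D}$ used in the sequel.)
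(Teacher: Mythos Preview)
Your proposal is correct and takes essentially the same approach as the paper: the paper's proof is the single sentence ``Apply the uniqueness part of the universal property of opcartesian multicells of the form $\widetilde\eta_D$ repeatedly'', and your ``cancellation principle'' is precisely that uniqueness statement, which you then apply part by part exactly as intended. Your write-up is just a more explicit unpacking of the same one-line argument.
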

\begin{proof}
    Apply the uniqueness part of the universal property of opcartesian multicells of the form $\widetilde\eta_D$ repeatedly.
\end{proof}

Thus for any unital virtual double category $\vdcat{D}$, we obtain a unital virtual double functor $\Inc_\vdcat{D}\colon \vdcat{D}\to \MMod(\UU\vdcat{D})$. 
The $2$-naturality of $\Inc\colon 1_\VDbln\to \MMod\UU$ follows from the following proposition.

\begin{proposition}
\label{prop:strictly-unital-pres-mu-omega-lambda-rho}
    Let $\vdcat{D}$ and $\vdcat{E}$ be unital virtual double categories, and $F\colon \vdcat{D}\to \vdcat{E}$ be a unital virtual double functor. 
    \begin{enumerate}
        \item For each object $D$ in $\vdcat{D}$, we have $F\widetilde\mu^\vdcat{D}_D = \widetilde\mu_{FD}^\vdcat{E}$. 
        \item For each vertical morphism $u$ in $\vdcat{D}$, we have $F\widetilde\omega_u^\vdcat{D} = \widetilde \omega_{Fu}^\vdcat{E}$.
        \item For each horizontal morphism $\Gamma$ in $\vdcat{D}$, we have $F\widetilde\lambda_\Gamma^\vdcat{D}=\widetilde\lambda_{F\Gamma}^\vdcat{E}$ and $F\widetilde\rho_{\Gamma}^\vdcat{D}=\widetilde\rho_{F\Gamma}^\vdcat{E}$. 
    \end{enumerate}
    If in addition we have a unital virtual double functor $G\colon\vdcat{D}\to \vdcat{E}$ and a vertical transformation $\alpha\colon F\to G$, then we have $\alpha_{\hu^\vdcat{D}_D}=\widetilde\omega^\vdcat{E}_{\alpha_D}$ for each object $D$ in $\vdcat{D}$.
\end{proposition}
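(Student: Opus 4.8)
The common mechanism behind all four assertions is that each of the multicells $\widetilde\omega$, $\widetilde\lambda$, $\widetilde\rho$ is \emph{defined} by a universal property expressed in terms of the opcartesian cells $\widetilde\eta_D$, and that a \emph{unital} virtual double functor preserves both the chosen horizontal units $\hu_D$ and the chosen opcartesian cells $\widetilde\eta_D$ on the nose (while, being a virtual double functor, it also preserves identity cells and all composites of multicells, including the $m=0$ composites $\kappa\circ(u)$). So in each case the plan is the same: apply $F$ to the defining equation, use unitality and functoriality to rewrite both sides entirely in $\vdcat{E}$, observe that the resulting equation together with the boundary of the image cell are precisely the ones characterizing the corresponding structure in $\vdcat{E}$, and conclude by uniqueness.

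Concretely, I would first prove (2). By \cref{def:omega-lambda-rho-in-unital-vdbl}, $\widetilde\omega_u^\vdcat{D}$ is the unique cell with the stated boundary satisfying $\widetilde\omega_u^\vdcat{D}\circ(\widetilde\eta_D^\vdcat{D})=\widetilde\eta_E^\vdcat{D}\circ(u)$; applying $F$ and using $F\widetilde\eta_D^\vdcat{D}=\widetilde\eta_{FD}^\vdcat{E}$, $F\widetilde\eta_E^\vdcat{D}=\widetilde\eta_{FE}^\vdcat{E}$ and $F\hu_D^\vdcat{D}=\hu_{FD}^\vdcat{E}$, we see that $F\widetilde\omega_u^\vdcat{D}$ has the boundary of $\widetilde\omega_{Fu}^\vdcat{E}$ and satisfies the equation defining it, whence $F\widetilde\omega_u^\vdcat{D}=\widetilde\omega_{Fu}^\vdcat{E}$. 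The identical argument applied to the defining equations $\widetilde\lambda_\Gamma^\vdcat{D}\circ(\widetilde\eta_D^\vdcat{D},\id_\Gamma)=\id_\Gamma$ and its reflection for $\widetilde\rho$ gives (3). Part (1) is then a corollary: $\widetilde\mu_D^\vdcat{D}=\widetilde\lambda_{\hu_D^\vdcat{D}}^\vdcat{D}$ by \cref{defining-Inc}(1), so $F\widetilde\mu_D^\vdcat{D}=F\widetilde\lambda_{\hu_D^\vdcat{D}}^\vdcat{D}=\widetilde\lambda_{F\hu_D^\vdcat{D}}^\vdcat{E}=\widetilde\lambda_{\hu_{FD}^\vdcat{E}}^\vdcat{E}=\widetilde\mu_{FD}^\vdcat{E}$, using (3) and unitality of $F$. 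Finally, for the statement about $\alpha\colon F\to G$, I would instantiate the defining axiom of a vertical transformation at the single nullary multicell $\theta=\widetilde\eta_D^\vdcat{D}$; since $F$ and $G$ are unital, $F\widetilde\eta_D^\vdcat{D}=\widetilde\eta_{FD}^\vdcat{E}$ and $G\widetilde\eta_D^\vdcat{D}=\widetilde\eta_{GD}^\vdcat{E}$, and the axiom reduces to $\alpha_{\hu_D^\vdcat{D}}\circ(\widetilde\eta_{FD}^\vdcat{E})=\widetilde\eta_{GD}^\vdcat{E}\circ(\alpha_D)$. Because $\alpha_{\hu_D^\vdcat{D}}$ is, by the very definition of a vertical transformation, a cell $\hu_{FD}^\vdcat{E}\Rightarrow\hu_{GD}^\vdcat{E}$ with both legs $\alpha_D$, this is exactly the equation characterizing $\widetilde\omega_{\alpha_D}^\vdcat{E}$, and uniqueness gives $\alpha_{\hu_D^\vdcat{D}}=\widetilde\omega_{\alpha_D}^\vdcat{E}$.

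The only point that requires a little attention — and the one I expect to be the main (though still minor) obstacle — is the bookkeeping around nullary multicells: one must check that, in the $n=0$ instance of the vertical-transformation axiom, the right-hand side $G\theta\circ(\alpha_{\Gamma_1},\dots,\alpha_{\Gamma_n})$ is to be read as $G\theta\circ(\alpha_D)$ for the top object $D$ of $\theta$ (the $m=0$ composite with the vertical morphism $\alpha_D\colon FD\to GD$), so that the boundaries on the two sides genuinely agree; this last matching uses the naturality of $\alpha_\vc$. Once this convention is pinned down, every step above is a direct application of functoriality, or of a defining property followed by an appeal to uniqueness, with no genuine computation required.
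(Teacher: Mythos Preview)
Your proof is correct. The paper does not actually supply a proof of this proposition --- it is stated without proof, in keeping with the one-line proof of the preceding \cref{defining-Inc} (``Apply the uniqueness part of the universal property of opcartesian multicells of the form $\widetilde\eta_D$ repeatedly''). Your argument is exactly the intended one: in each case you apply $F$ (or the vertical-transformation axiom) to the defining equation, use unitality to replace $F\widetilde\eta^{\vdcat{D}}_D$ by $\widetilde\eta^{\vdcat{E}}_{FD}$ and $F\hu^{\vdcat{D}}_D$ by $\hu^{\vdcat{E}}_{FD}$, and then invoke the uniqueness clause in the opcartesian universal property. Your handling of the nullary case of the vertical-transformation axiom (reading the right-hand side as $G\widetilde\eta^{\vdcat{D}}_D\circ(\alpha_D)$ and checking the boundaries match via naturality of $\alpha_\vc$) is the correct interpretation of the paper's convention for $m=0$ composites.
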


Finally, we check the triangle identities.
That $\Und_{\UU\vdcat{D}}. \UU\Inc_\vdcat{D}=1_{\UU\vdcat{D}}$ holds for each unital virtual double category $\vdcat{D}$ is obvious.
The other triangle identity is a consequence of the following. 

\begin{proposition}
\label{prop:tilde-in-Mod}
    Let $\vdcat{A}$ be a virtual double category. 
    In the unital virtual double category $\MMod(\vdcat{A})$, the multicells introduced in \cref{def:omega-lambda-rho-in-unital-vdbl,defining-Inc} satisfy the following. 
    \begin{enumerate}
        \item For each object $\mnd{T}=(A,T,\eta,\mu)$ in $\MMod(\vdcat{A})$, we have $\widetilde\mu_\mnd{T}=\mu$.
        \item For each vertical morphism $\vmm{u}=(u,\omega)$ in $\MMod(\vdcat{A})$, we have $\widetilde\omega_{\vmm{u}}=\omega$. 
        \item For each horizontal morphism $\hbm{\Gamma}=(\Gamma,\lambda,\rho)$ in $\MMod(\vdcat{A})$, we have $\widetilde\lambda_{\hbm{\Gamma}}=\lambda$ and $\widetilde\rho_{\hbm{\Gamma}}=\rho$. 
    \end{enumerate}
\end{proposition}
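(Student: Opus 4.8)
The plan is to reduce everything to data and equations in $\vdcat{A}$ via the forgetful virtual double functor $\Und_\vdcat{A}\colon\UU\MMod(\vdcat{A})\to\vdcat{A}$, and then to invoke the uniqueness half of the universal property of the opcartesian multicells $\widetilde\eta_\mnd{T}$. Two facts underpin this. First, by the construction of $\MMod(\vdcat{A})$ recalled above, a multicell of $\MMod(\vdcat{A})$ is precisely a multicell of $\vdcat{A}$ between the underlying horizontal morphisms satisfying the evident equivariance conditions; in particular it is uniquely determined by its underlying multicell in $\vdcat{A}$, so $\Und_\vdcat{A}$ reflects equalities of multicells. Second, for a horizontal monad $\mnd{T}=(A,T,\eta,\mu)$ the chosen horizontal unit $\hu_\mnd{T}$ in $\MMod(\vdcat{A})$ has underlying horizontal morphism $T$ with both monad-bimodule actions equal to $\mu$, and the opcartesian cell $\widetilde\eta_\mnd{T}$ has underlying nullary multicell $\eta$.

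I would first prove~(3). Let $\hbm{\Gamma}=(\Gamma,\lambda,\rho)\colon\mnd{T}\pto\mnd{T}'$ be a monad bimodule. One checks that the left action $\lambda\colon(T,\Gamma)\Rightarrow\Gamma$ of $\vdcat{A}$ underlies a multicell $(\hu_\mnd{T},\hbm{\Gamma})\Rightarrow\hbm{\Gamma}$ of $\MMod(\vdcat{A})$ over the identity monad morphisms on $\mnd{T}$ and $\mnd{T}'$: the required equivariance conditions are exactly associativity of the left action and compatibility of the left and right actions of the bimodule. Since $\Und_\vdcat{A}$ preserves composites of multicells and vertical identity cells, the equation $\lambda\circ(\widetilde\eta_\mnd{T},\id_{\hbm{\Gamma}})=\id_{\hbm{\Gamma}}$ in $\MMod(\vdcat{A})$ has as its underlying equation $\lambda\circ(\eta,\id_\Gamma)=\id_\Gamma$ in $\vdcat{A}$, which is the left unit axiom of the bimodule and hence holds; by the first fact above it therefore holds in $\MMod(\vdcat{A})$. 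Thus $\lambda$ satisfies the equation defining $\widetilde\lambda_{\hbm{\Gamma}}$, so the uniqueness part of the opcartesian property of $\widetilde\eta_\mnd{T}$ gives $\widetilde\lambda_{\hbm{\Gamma}}=\lambda$; the argument for $\widetilde\rho_{\hbm{\Gamma}}=\rho$ is symmetric, using the right unit axiom. Part~(1) then follows as the special case of~(3) with $\hbm{\Gamma}=\hu_\mnd{T}$, whose left and right actions are both $\mu$: this gives $\widetilde\mu_\mnd{T}=\widetilde\lambda_{\hu_\mnd{T}}=\mu$.

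Part~(2) is entirely analogous. For a monad morphism $\vmm{u}=(u,\omega)\colon\mnd{T}\to\mnd{T}'$, one checks that the cell $\omega\colon T\Rightarrow T'$ over $u,u$ of $\vdcat{A}$ underlies a cell $\hu_\mnd{T}\Rightarrow\hu_{\mnd{T}'}$ over $\vmm{u},\vmm{u}$ of $\MMod(\vdcat{A})$ --- the equivariance condition here is precisely compatibility of $\omega$ with the multiplications, i.e.\ the multiplication axiom of the monad morphism --- and that the underlying equation in $\vdcat{A}$ of $\widetilde\omega_{\vmm{u}}\circ(\widetilde\eta_\mnd{T})=\widetilde\eta_{\mnd{T}'}\circ(\vmm{u})$ reads $\omega\circ(\eta)=\eta'\circ(u)$, which is the unit axiom of the monad morphism and so holds with $\widetilde\omega_{\vmm{u}}$ replaced by $\omega$. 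Reflecting this equality along $\Und_\vdcat{A}$ and applying the uniqueness in the universal property of $\widetilde\eta_\mnd{T}$ yields $\widetilde\omega_{\vmm{u}}=\omega$. The only real work throughout is the bookkeeping needed to match boundaries in $\MMod(\vdcat{A})$ with those in $\vdcat{A}$ and to recognise at each step that the relevant equivariance conditions are the associativity-type axioms while the relevant defining equations become the unit-type axioms; everything else is formal, driven entirely by the uniqueness in the universal property of $\widetilde\eta_\mnd{T}$.
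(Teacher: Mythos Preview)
Your proposal is correct and follows precisely the expected line: verify that the structural data $\mu$, $\omega$, $\lambda$, $\rho$ already satisfy the defining equations of $\widetilde\mu$, $\widetilde\omega$, $\widetilde\lambda$, $\widetilde\rho$ (these equations becoming the unit-type axioms of monads, monad morphisms, and bimodules under $\Und_\vdcat{A}$), and then invoke the uniqueness half of the universal property of $\widetilde\eta_\mnd{T}$. The paper does not spell out a proof for this proposition, leaving it as a routine verification in the spirit of the one-line proof given for \cref{defining-Inc}; your write-up is exactly that verification made explicit, including the helpful observation that $\Und_\vdcat{A}$ reflects equalities of multicells.
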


This completes the construction of the $2$-adjunction \cref{eqn:U-Mod-adjunction}. 

As mentioned in \cref{sec:intro}, we define $\EEnr$ as the composite $2$-functor 
\begin{equation*}
\begin{tikzpicture}[baseline=-\the\dimexpr\fontdimen22\textfont2\relax ]
      \node(0) at (0,0)  {$\VDbl$};
      \node(1) at (3,0)  {$\VDbl$};
      \node(2) at (6.5,0)  {$\VDbln$.};
      \draw [->] (0) to node[auto, labelsize] {$\MMat$}  (1);
      \draw [->] (1) to node[auto, labelsize] {$\MMod$}  (2);
\end{tikzpicture}
\end{equation*}
For each $\vdcat{A}\in \VDbl$, the unital virtual double category $\EEnr(\vdcat{A})=\MMod(\enMMat{\vdcat{A}})$ is also denoted by $\PProf{\vdcat{A}}$ (see \cite[Example~2.9]{Cruttwell-Shulman-unified}).
The objects and vertical morphisms of $\PProf{\vdcat{A}}$ coincide with the small $\vdcat{A}$-categories and $\vdcat{A}$-functors defined in \cref{subsec:enrichment-over-vdbl}, whereas 
the $\vdcat{A}$-natural transformations appear as special cases of multicells in $\PProf{\vdcat{A}}$ (see \cref{subsec:VV-V} for details).
The horizontal morphisms of $\PProf{\vdcat{A}}$ are called the \emph{$\vdcat{A}$-profunctors}.

\begin{theorem}\label{EEnr-is-familial}
    The $2$-functor $\EEnr\colon \VDbl\to \VDbln$ sending $\vdcat{A}$ to $\PProf{\vdcat{A}}$ is given by the composite $\MMod\circ\MMat$ and so is familial.
\end{theorem}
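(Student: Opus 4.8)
The plan is to exhibit $\EEnr$ as a composite of two familial $2$-functors and invoke the closure of familial $2$-functors under composition, which was recorded in \cref{subsec:familial} as a consequence of \cite[Theorem~6.6]{Weber-familial}. So the proof reduces to checking that both $\MMat\colon\VDbl\to\VDbl$ and $\MMod\colon\VDbl\to\VDbln$ are familial.

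First I would handle $\MMat$: this is already done, being exactly the content of \cref{prop:MMat}, where $\MMat$ is identified as the polynomial $2$-functor $\mathcal{P}(P_\hc)$ and shown to be familial via \cref{prop:poly-familial} together with \cref{dof-between-vdbl-is-internal-dof} (the map $P_\hc$ is both powerful, by \cref{prop:disc-opfib-powerful}, and a representable discrete opfibration, since $P$ is a discrete opfibration of categories and $(-)_\hc$ preserves these). So nothing new is needed there.

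Second I would argue that $\MMod\colon\VDbl\to\VDbln$ is familial. By \cref{ex:right-adj-familial}, any right $2$-adjoint between $2$-categories with terminal objects is familial; the target $\VDbln$ does have a terminal object (the terminal virtual double category carries a unique unital structure), and $\VDbl$ does as well, so it suffices to observe that $\MMod$ is a right $2$-adjoint. But this is precisely the $2$-adjunction $\UU\dashv\MMod$ of \eqref{eqn:U-Mod-adjunction} established earlier in this section. (One should check that $\MMod$ takes the terminal virtual double category to the terminal unital virtual double category, or equivalently that $\MMod_1\cong\MMod$ up to isomorphism; this holds because the unique horizontal monad on the single object of $1$ is the horizontal unit, so $\MMod(1)\cong 1$ in $\VDbln$.) Hence $\MMod$ is familial.

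Combining these, $\EEnr=\MMod\circ\MMat$ is a composite of familial $2$-functors, hence familial. The only point that requires slight care — and the main (mild) obstacle — is confirming that the degenerate hypotheses of \cref{ex:right-adj-familial} genuinely apply, i.e.\ that $\VDbln$ has a terminal object and that $\MMod$ preserves terminal objects so that $\MMod$ really is familial and not merely a right $2$-adjoint into a slice; once this bookkeeping is dispatched, the result is immediate.
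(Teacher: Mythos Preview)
Your proposal is correct and matches the paper's own proof essentially verbatim: the paper also cites \cref{prop:MMat} for $\MMat$, invokes \cref{ex:right-adj-familial} on the right $2$-adjoint $\MMod$, and concludes by closure of familial $2$-functors under composition. Your extra bookkeeping about terminal objects is harmless but unnecessary, since a right $2$-adjoint automatically preserves terminal objects, so $\MMod(1)$ is terminal in $\VDbln$ without any separate verification.
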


\begin{proof}
    $\MMat$ is familial by \cref{prop:MMat} while $\MMod$ is a right $2$-adjoint and so familial by \cref{ex:right-adj-familial}.
    The claim follows from the fact that familial $2$-functors are closed under composition. 
\end{proof}

\begin{remark}
    In fact, for any unital virtual double category $\vdcat{D}$, the virtual double functors $\UU\Inc_\vdcat{D}$ and $\Und_{\UU\vdcat{D}}$ form an adjunction
    \begin{equation*}
\begin{tikzpicture}[baseline=-\the\dimexpr\fontdimen22\textfont2\relax ]
      \node(0) at (0,0) {$\UU\vdcat{D}$};
      \node(1) at (4,0) {$\UU\MMod(\UU\vdcat{D})$};
      \draw [->, transform canvas={yshift=5}] (0) to node[auto, labelsize] {$\UU\Inc_\vdcat{D}$}  (1);  
      \draw [->, transform canvas={yshift=-5}] (1) to node[auto, labelsize] {$\Und_{\UU\vdcat{D}}$}  (0); 
      \path(0) to node[rotate=-90] {$\dashv$} (1);
\end{tikzpicture}
\end{equation*}
    with identity unit in $\VDbl$.
\end{remark}

\subsection{The 2-adjunction $\FF\dashv \UU$}
Here we show that 
the forgetful $2$-functor $\UU\colon \VDbln\to \VDbl$ has a left $2$-adjoint $\FF\colon \VDbl\to \VDbln$ as well. 
Given a virtual double category $\vdcat{A}$, define the virtual double category $\FF\vdcat{A}$ as follows.
\begin{itemize}
    \item The vertical category of $\FF\vdcat{A}$ is identical to the vertical category of $\vdcat{A}$. 
    \item The horizontal morphisms of $\FF\vdcat{A}$ are those of $\vdcat{A}$ together with a new horizontal morphism $\hu_A\colon A\pto A$ for each object $A\in\vdcat{A}$. 
    \item Given a configuration 
    \begin{equation}
\label{eqn:boundary-of-2-cell-in-FA}
\begin{tikzpicture}[baseline=-\the\dimexpr\fontdimen22\textfont2\relax ]
      \node(00) at (0,0.75) {$A_0$};
      \node(01) at (1.5,0.75) {$A_1$};
      \node(02) at (3,0.75) {$\cdots$};
      \node(03) at (4.5,0.75) {$A_n$};
      \node(10) at (0,-0.75) {$B$};
      \node(11) at (4.5,-0.75) {$B'$}; 
      \draw [pto] (00) to node[auto, labelsize] {$\Gamma_1$} (01);  
      \draw [pto] (01) to node[auto, labelsize] {$\Gamma_2$} (02);  
      \draw [pto] (02) to node[auto, labelsize] {$\Gamma_n$} (03);  
      \draw [pto] (10) to node[auto, swap, labelsize] {$\Delta$} (11); 
      \draw [->] (00) to node[auto, swap, labelsize] {$u$} (10); 
      \draw [->] (03) to node[auto,labelsize] {$u'$} (11);
\end{tikzpicture}    
\end{equation}
    in $\FF\vdcat{A}$, if $\Delta\neq\hu_B$, then 
    the multicells in $\FF\vdcat{A}$ having \eqref{eqn:boundary-of-2-cell-in-FA} as the boundary are in bijection with the multicells in $\vdcat{A}$ having ``\eqref{eqn:boundary-of-2-cell-in-FA} with $\hu_A$'s erased'' as the boundary. 
    If $\Delta=\hu_B$, then there exists at most one multicell in $\FF\vdcat{A}$ having \eqref{eqn:boundary-of-2-cell-in-FA} as the boundary, and such a multicell exists in $\FF\vdcat{A}$ if and only if $u=u'$ and each horizontal morphism $\Gamma_i$ is of the form $\hu_A$. 
\end{itemize}
Then $\FF\vdcat{A}$ has the evident structure of a virtual double category. 
Moreover, for each object $A$, the unique multicell in $\FF\vdcat{A}$ having 
\begin{equation*}
\begin{tikzpicture}[baseline=-\the\dimexpr\fontdimen22\textfont2\relax ]
      \node(00) at (0.75,0.75) {$A$};
      \node(10) at (0,-0.75) {$A$};
      \node(11) at (1.5,-0.75) {$A$}; 
      \draw [pto] (10) to node[auto, swap, labelsize] {$\hu_A$} (11); 
      \draw [double equal sign distance] (00) to (10); 
      \draw [double equal sign distance] (00) to (11); 
\end{tikzpicture}
\end{equation*}
as the boundary, is opcartesian in $\FF\vdcat{A}$. Hence $\FF\vdcat{A}$ is a unital virtual double category.
We have the canonical inclusion virtual double functor $\vdcat{A}\to \UU\FF\vdcat{A}$, which can be easily seen to have the required universal property. 

Therefore $\UU\colon\VDbln\to \VDbl$ is a right $2$-adjoint, and we obtain the following variants of \cref{EEnr-is-familial}. 
\begin{proposition}
    The $2$-functors $\VDbl\xrightarrow{\EEnr}\VDbln\xrightarrow{\UU}\VDbl$, $\VDbln\xrightarrow{\UU}\VDbl\xrightarrow{\EEnr}\VDbln$, and $\VDbln\xrightarrow{\UU}\VDbl\xrightarrow{\EEnr}\VDbln\xrightarrow{\UU}\VDbl$ are familial.
\end{proposition}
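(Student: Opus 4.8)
The plan is to deduce all three statements from two facts already in hand together with the closure of familial $2$-functors under composition. The first fact is \cref{EEnr-is-familial}: the $2$-functor $\EEnr\colon\VDbl\to\VDbln$ is familial. The second is that $\UU\colon\VDbln\to\VDbl$ is a right $2$-adjoint, since the left $2$-adjoint $\FF$ was just constructed. By \cref{ex:right-adj-familial}, any right $2$-adjoint between $2$-categories with terminal objects is familial; hence, once we check that $\VDbln$ has a terminal object, $\UU$ is familial.

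For the terminal object of $\VDbln$, I would take the terminal virtual double category $1$: it carries an essentially unique horizontal unit structure, namely its unique horizontal endomorphism together with its unique nullary multicell, which is opcartesian for trivial reasons. It remains terminal in $\VDbln$ because for any unital virtual double category $\vdcat{D}$ there is exactly one virtual double functor $\vdcat{D}\to 1$ (its action on objects, vertical morphisms, horizontal morphisms, and multicells all being forced), and this functor is automatically unital. Thus $\VDbln$ has a terminal object and the notion of familial $2$-functor applies both to $\UU$ and to each of the three composites in question (each of which has domain $\VDbl$ or $\VDbln$).

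With $\EEnr$ and $\UU$ both familial, each of $\UU\circ\EEnr$, $\EEnr\circ\UU$, and $\UU\circ\EEnr\circ\UU$ is a composite of familial $2$-functors, hence familial, by the closure of familial $2$-functors under composition recalled in \cref{subsec:familial} (which follows from \cite[Theorem~6.6]{Weber-familial}). The only step requiring any care is the identification of the terminal object of $\VDbln$, and as indicated this is immediate; the remainder of the argument is purely formal.
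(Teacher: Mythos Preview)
Your proof is correct and follows essentially the same approach as the paper: both argue that $\EEnr$ and $\UU$ are familial and then invoke closure of familial $2$-functors under composition. Your explicit verification that $\VDbln$ has a terminal object is a small detail the paper leaves implicit but which is indeed needed for \cref{ex:right-adj-familial} to apply.
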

\begin{proof}
    These are all composites of familial $2$-functors.
\end{proof}

\subsection{The 2-adjunction $\VV\dashv\V$}\label{subsec:VV-V}
In this subsection, we establish the 2-adjunction
\[
\begin{tikzpicture}[baseline=-\the\dimexpr\fontdimen22\textfont2\relax ]
      \node(0) at (0,0) {$\VDbln$};
      \node(1) at (4,0) {$\twoCAT$.};
      \draw [<-, transform canvas={yshift=5}] (0) to node[auto, labelsize] {$\VV$}  (1);  
      \draw [<-, transform canvas={yshift=-5}] (1) to node[auto, labelsize] {$\V$}  (0); 
      \path(0) to node[rotate=-90] {$\dashv$} (1);
\end{tikzpicture}
\]

Given a 2-category $\tcat{K}$, we define the virtual double category $\VV\tcat{K}$ as follows. 
\begin{itemize}
    \item The vertical category $(\VV\tcat{K})_\vc$ of $\VV\tcat{K}$ is the underlying category $\tcat{K}_0$ of $\tcat{K}$.
    \item For each object $K$ in $\tcat{K}$, there is a horizontal morphism $\hu_K\colon K\pto K$ in $\VV\tcat{K}$; there are no other horizontal morphisms in $\VV\tcat{K}$.
    \item For each configuration 
\begin{equation}
\label{eqn:boundary-VV-K}
\begin{tikzpicture}[baseline=-\the\dimexpr\fontdimen22\textfont2\relax ]
      \node(00) at (0,0.75) {$K$};
      \node(01) at (1.5,0.75) {$K$};
      \node(02) at (3,0.75) {$\cdots$};
      \node(03) at (4.5,0.75) {$K$};
      \node(10) at (0,-0.75) {$L$};
      \node(11) at (4.5,-0.75) {$L$}; 
      \draw [pto] (00) to node[auto, labelsize] {$\hu_K$} (01);  
      \draw [pto] (01) to node[auto, labelsize] {$\hu_K$} (02);  
      \draw [pto] (02) to node[auto, labelsize] {$\hu_K$} (03);  
      \draw [pto] (10) to node[auto, swap, labelsize] {$\hu_L$} (11); 
      \draw [->] (00) to node[auto, swap, labelsize] {$f$} (10); 
      \draw [->] (03) to node[auto,labelsize] {$f'$} (11);
\end{tikzpicture}
\end{equation}
    in $\VV\tcat{K}$, where there are $n$ copies of $\hu_K$ with $n\in\mathbb{N}$, the multicells in $\VV\tcat{K}$ having \eqref{eqn:boundary-VV-K} as the boundary are in bijection with the 2-cells from $f$ to $f'$ in $\tcat{K}$. 
\end{itemize}
Then $\VV\tcat{K}$ becomes a virtual double category under the natural operations.
Moreover, each multicell in $\VV\tcat{K}$ corresponding to the identity $2$-cell on an identity $1$-cell of $\tcat{K}$ is opcartesian. Hence we can make $\VV\tcat{K}$ into an object of $\VDbln$. 

On the other hand, given $\vdcat{D}\in\VDbln$, with chosen horizontal units $\bigl((\hu_D,\widetilde\eta_D)\bigr)_{D\in\vdcat{D}}$,
we define the 2-category $\V\vdcat{D}$ as follows. 
\begin{itemize}
    \item The underlying category $(\V\vdcat{D})_0$ of $\V\vdcat{D}$ is the vertical category $\vdcat{D}_\vc$ of $\vdcat{D}$. 
    \item For each parallel pair of morphisms $u,u'\colon D\to E$ in $(\V\vdcat{D})_0$, the set of all 2-cells from $u$ to $u'$ in $\V\vdcat{D}$ is the set of all cells 
    \begin{equation}
    \label{eqn:theta-in-VA}
\begin{tikzpicture}[baseline=-\the\dimexpr\fontdimen22\textfont2\relax ]
      \node(20) at (0,0.75) {$D$};
      \node(21) at (1.5,0.75) {$D$};
      \node(00) at (0,-0.75) {$E$};
      \node(01) at (1.5,-0.75) {$E$}; 
      \draw [pto] (20) to node[auto,labelsize] {$\hu_{D}$} (21);
      \draw [pto] (00) to node[auto,swap,labelsize] {$\hu_E$} (01);
      \draw [->] (20) to node[auto,swap,labelsize] {$u$} (00); 
      \draw [->] (21) to node[auto,labelsize] {$u'$} (01); 
      \node at (0.75,0) {$\theta$}; 
\end{tikzpicture}
\end{equation}
    in $\vdcat{D}$. 
\end{itemize}

Then one can make $\V\vdcat{D}$ into a $2$-category \cite[Proposition~6.1]{Cruttwell-Shulman-unified}. 

It is straightforward to see that we have a $2$-adjunction $\VV\dashv \V$.

Moreover, the composite $2$-functor $\VDbl\xrightarrow{\EEnr}\VDbln\xrightarrow{\V}\twoCAT$ is isomorphic to the $2$-functor $\Enr$ defined in \cref{subsec:enrichment-over-vdbl}; cf.\ \cite[Example~6.4]{Cruttwell-Shulman-unified}.
In particular, for any virtual double category $\vdcat{A}$, the $\vdcat{A}$-natural transformations correspond to the cells of the form \eqref{eqn:theta-in-VA} in $\PProf{\vdcat{A}}$.
Therefore we deduce the following.

\begin{theorem}
    The $2$-functor $\Enr\colon \VDbl\to \twoCAT$ is familial.
\end{theorem}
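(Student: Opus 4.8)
The plan is to read off the result from the factorization recorded just above the statement: $\Enr$ is isomorphic to the composite $\VDbl \xrightarrow{\EEnr} \VDbln \xrightarrow{\V} \twoCAT$. Since being familial is invariant under isomorphism of $2$-functors and familial $2$-functors are closed under composition (\cite[Theorem~6.6]{Weber-familial}, recalled in \cref{subsec:familial}), it suffices to check that each of $\EEnr$ and $\V$ is familial.

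For the first factor I would invoke \cref{EEnr-is-familial}, where $\EEnr$ is exhibited as $\MMod \circ \MMat$: there $\MMat$ is familial by \cref{prop:MMat} --- being the polynomial $2$-functor $\mathcal{P}(P_\hc)$, whose defining $1$-cell $P_\hc$ is a discrete opfibration of virtual double categories and hence powerful by \cref{prop:disc-opfib-powerful} --- while $\MMod$ is familial as a right $2$-adjoint, by \cref{ex:right-adj-familial}. For the second factor, $\twoCAT$ and $\VDbln$ both have terminal objects (the terminal virtual double category carries a unique unital structure), so the right $2$-adjoint $\V$ of the $2$-adjunction $\VV \dashv \V$ is likewise familial by \cref{ex:right-adj-familial}.

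The only step with genuine content is the isomorphism $\Enr \cong \V \circ \EEnr$, which I would establish by matching data levelwise: the objects and vertical morphisms of $\PProf{\vdcat{A}} = \EEnr(\vdcat{A})$ are by construction the small $\vdcat{A}$-categories and $\vdcat{A}$-functors, and a routine unwinding shows that an $\vdcat{A}$-natural transformation between $\vdcat{A}$-functors $f, f' \colon \ecat{C} \to \ecat{D}$ is precisely a cell in $\PProf{\vdcat{A}}$ between the hom-profunctors $\hu_{\ecat{C}}$ and $\hu_{\ecat{D}}$ along $f$ and $f'$ --- i.e., a $2$-cell of $\V(\PProf{\vdcat{A}})$; this is the virtual-double-category counterpart of \cite[Example~6.4]{Cruttwell-Shulman-unified}. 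I do not anticipate a real obstacle: the substantive work has already been absorbed into \cref{EEnr-is-familial} --- and ultimately into the powerfulness of discrete opfibrations of virtual double categories --- so what remains is the formal bookkeeping of composing familial $2$-functors.
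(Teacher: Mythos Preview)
Your proposal is correct and follows essentially the same route as the paper: factor $\Enr$ as $\V\circ\EEnr$, invoke \cref{EEnr-is-familial} for the first factor and \cref{ex:right-adj-familial} for the right $2$-adjoint $\V$, then use closure of familial $2$-functors under composition. The additional unpacking you offer of $\EEnr=\MMod\circ\MMat$ and of the identification $\Enr\cong\V\circ\EEnr$ is accurate but already absorbed into the cited results.
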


\section{The polynomial 2-functor \texorpdfstring{$\FFam$}{Fam} and virtual double categories of elements}
\label{sec:FFam}

For any category $\cat{C}$ with pullbacks, we have the virtual double category $\SSpan(\cat{C})$ 
whose vertical category is $\cat{C}$ and whose horizontal morphisms are spans in $\cat{C}$
\cite[Example~2.7]{Cruttwell-Shulman-unified}.
This construction defines a $2$-functor $\SSpan(-)\colon \CATpb\to \VDbl$, where $\CATpb$ is the $2$-category of large categories with pullbacks, pullback-preserving functors, and arbitrary natural transformations. 
We write the virtual double category $\SSpan(\Set)$ (resp.\ $\SSpan(\Set_\ast)$) as $\SSpan$ (resp.\ $\SSpan_\ast$). 

It is not difficult to see that the $2$-functor $\SSpan(-)\colon \CATpb\to \VDbl$ sends (pullback-preserving) discrete opfibrations between categories to discrete opfibrations between virtual double categories (in the sense of \cref{def:disc-opfib}). 
Therefore the forgetful functor $\Set_\ast\to \Set$ gives rise to a discrete opfibration $\SSpan_\ast\to \SSpan$ between virtual double categories, which we denote by $Q$. 

We define the $2$-functor $\FFam\colon\VDbl\to \VDbl$ as the composite 
\begin{equation}\label{eqn:FFam-composite}
\begin{tikzpicture}[baseline=-\the\dimexpr\fontdimen22\textfont2\relax ]
      \node(0) at (0,0)  {$\VDbl$};
      \node(1) at (4,0)  {$\VDbl/\SSpan_\ast$};
      \node(2) at (8,0)  {$\VDbl/\SSpan$};
      \node(3) at (12,0)  {$\VDbl$.};
      \draw [->] (0) to node[auto,labelsize] {$\SSpan_\ast\times (-)$}  (1);
      \draw [->] (1) to node[auto,labelsize] {$\prod_{Q}$}  (2);
      \draw [->] (2) to node[auto,labelsize] {forgetful}  (3);
\end{tikzpicture}
\end{equation}
That is, $\FFam$ is the polynomial $2$-functor $\mathcal{P}(Q)$ (see \cref{subsec:poly}). 
Using the construction given in the proof of \cref{prop:disc-opfib-powerful}, we can describe the virtual double category $\FFam(\vdcat{A})$ for $\vdcat{A}\in \VDbl$ as follows.
\begin{itemize}
    \item The vertical category of $\FFam(\vdcat{A})$ is the category $\Fam(\vdcat{A}_\vc)$ of families of $\vdcat{A}_\vc$. Thus the objects and vertical morphisms of $\FFam(\vdcat{A})$ are identical to those of $\enMMat{\vdcat{A}}$ described in \cref{sec:MMat}, and we adopt the same notation.
\item A horizontal morphism of $\FFam(\vdcat{A})$ from $(I,\vect{A})$ to $(I',\vect{A}')$ consists of a span 
   \[\begin{tikzpicture}[baseline=-\the\dimexpr\fontdimen22\textfont2\relax ]
      \node(0) at (0,-0.75) {$I$};
      \node(10) at (1.5,0.75) {$S$};
      \node(1) at (3,-0.75) {$I'$}; 
      \draw [->] (10) to node[auto, swap, labelsize] {$(-)_-$} (0); 
      \draw [->] (10) to node[auto, labelsize] {$(-)_+$} (1); 
\end{tikzpicture}\]
of small sets and an $S$-indexed family $\vect{\Gamma}=(\Gamma_s\colon A_{s_-}\pto A'_{s_+})_{s\in S}$ of horizontal morphisms in $\vdcat{A}$. 
    \item A multicell of $\FFam(\vdcat{A})$ of type
\begin{equation*}
\begin{tikzpicture}[baseline=-\the\dimexpr\fontdimen22\textfont2\relax ]
      \node(00) at (0,0.75) {$(I_0,\vect{A}_0)$};
      \node(01) at (2.5,0.75) {$(I_1,\vect{A}_1)$};
      \node(02) at (5,0.75) {$\cdots$};
      \node(03) at (7.5,0.75) {$(I_n,\vect{A}_n)$};
      \node(10) at (0,-0.75) {$(I,\vect{A})$};
      \node(11) at (7.5,-0.75) {$(I',\vect{A}')$}; 
      \draw [pto] (00) to node[auto, labelsize] {$(S_1,\vect{\Gamma}_1)$} (01);    
      \draw [pto] (01) to node[auto, labelsize] {$(S_2,\vect{\Gamma}_2)$} (02);   
      \draw [pto] (02) to node[auto, labelsize] {$(S_n,\vect{\Gamma}_n)$} (03);  
      \draw [pto] (10) to node[auto, swap, labelsize] {$(S,\vect{\Gamma})$} (11); 
      \draw [->] (00) to node[auto, swap, labelsize] {$(v,\vect{f})$} (10); 
      \draw [->] (03) to node[auto,labelsize] {$(v',\vect{f}')$} (11);
      \node at (3.75,0) {$(m,\vect{\alpha})$}; 
\end{tikzpicture}
\end{equation*}
consists of a multimap 
\begin{equation*}
\begin{tikzpicture}[baseline=-\the\dimexpr\fontdimen22\textfont2\relax ]
      \node(00) at (0,0.75) {$I_0$};
      \node(01) at (1.5,0.75) {$I_1$};
      \node(02) at (3,0.75) {$\cdots$};
      \node(03) at (4.5,0.75) {$I_n$};
      \node(10) at (0,-0.75) {$I$};
      \node(11) at (4.5,-0.75) {$I'$}; 
      \draw [pto] (00) to node[auto, labelsize] {$S_1$} (01);  
      \draw [pto] (01) to node[auto, labelsize] {$S_2$} (02);  
      \draw [pto] (02) to node[auto, labelsize] {$S_n$} (03);  
      \draw [pto] (10) to node[auto, swap, labelsize] {$S$} (11); 
      \draw [->] (00) to node[auto, swap, labelsize] {$v$} (10); 
      \draw [->] (03) to node[auto,labelsize] {$v'$} (11);
      \node at (2.25,0) {$m$}; 
\end{tikzpicture}
\end{equation*}
of spans in $\Set$ (which is a suitable function $m\colon S_1\times_{I_1} S_2\times_{I_2}\cdots \times_{I_{n-1}} S_n\to S$; see \cite[Example~2.7]{Cruttwell-Shulman-unified} for details) and a family $\vect{\alpha}$ of multicells indexed by the set $S_1\times_{I_1}S_2\times_{I_2}\dots\times_{I_{n-1}}S_n$, whose $(s_1,s_2,\dots,s_n)$-th member is a multicell of type 
\begin{equation*}
\begin{tikzpicture}[baseline=-\the\dimexpr\fontdimen22\textfont2\relax ]
      \node(00) at (0,0.75) {$A_{0,i_0}$};
      \node(01) at (1.5,0.75) {$A_{1,i_1}$};
      \node(02) at (3,0.75) {$\cdots$};
      \node(03) at (4.5,0.75) {$A_{n,i_n}$};
      \node(10) at (0,-0.75) {$A_{v(i_0)}$};
      \node(11) at (4.5,-0.75) {$A'_{v'(i_n)}$}; 
      \draw [pto] (00) to node[auto, labelsize] {${\Gamma}_{1,s_1}$} (01);  
      \draw [pto] (01) to node[auto, labelsize] {${\Gamma}_{2,s_2}$} (02);  
      \draw [pto] (02) to node[auto, labelsize] {${\Gamma}_{n,s_n}$} (03);  
      \draw [pto] (10) to node[auto, swap, labelsize] {${\Gamma}_{m(s_1,\dots,s_n)}$} (11); 
      \draw [->] (00) to node[auto, swap, labelsize] {$f_{i_0}$} (10); 
      \draw [->] (03) to node[auto,labelsize] {$f'_{i_n}$} (11);
      \node at (2.25,0) {$\alpha_{s_1,\dots,s_n}$}; 
\end{tikzpicture}
\end{equation*}
in $\vdcat{A}$, where $i_{k-1}=(s_{k})_-$ and $i_k=(s_k)_{+}$ for each $1\leq k\leq n$.
(When $n=0$, we identify the set $S_1\times_{I_1}S_2\times_{I_2}\dots\times_{I_{n-1}}S_n$ with $I_0$, and hence $\vect{\alpha}$ is an $I_0$-indexed family of multicells of $\vdcat{A}$.)
\end{itemize}

\begin{proposition}
    The $2$-functor $\FFam\colon \VDbl\to \VDbl$ is familial.
\end{proposition}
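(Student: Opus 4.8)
The plan is to argue exactly as in the proof of \cref{prop:MMat}, by recognizing $\FFam$ as a polynomial $2$-functor of the form $\mathcal{P}(p)$ for a $1$-cell $p$ that is simultaneously powerful and a representable discrete opfibration, and then invoking \cref{prop:poly-familial}. By construction (see \eqref{eqn:FFam-composite}), $\FFam$ is the polynomial $2$-functor $\mathcal{P}(Q)$ associated with the polynomial \eqref{eqn:polynomial-for-fam} in $\VDbl$, where $Q\colon\SSpan_\ast\to\SSpan$ is the virtual double functor induced by the forgetful functor $\Set_\ast\to\Set$. Thus it suffices to check three things: that $\VDbl$ has finite limits, that $Q$ is powerful, and that $Q$ is a representable discrete opfibration in $\VDbl$.

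First I would note that $\VDbl$ is complete: it is the $2$-category of $T$-categories for $T$ the free category monad on $\Gph$ (\cref{rmk:vdbl-cat-as-T-cat}), and since this $T$ is a cartesian monad on the complete category $\Gph$, its $2$-category of $T$-categories is complete; in particular it has finite limits. Next, as recorded in the discussion preceding the statement, the $2$-functor $\SSpan(-)\colon\CATpb\to\VDbl$ carries pullback-preserving discrete opfibrations of categories to discrete opfibrations of virtual double categories in the sense of \cref{def:disc-opfib}, and the forgetful functor $\Set_\ast\to\Set$ is such a functor; hence $Q$ is a discrete opfibration in the sense of \cref{def:disc-opfib}. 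Then \cref{prop:disc-opfib-powerful} gives that $Q$ is powerful, and \cref{dof-between-vdbl-is-internal-dof} gives that $Q$ is a representable discrete opfibration in $\VDbl$. With all three hypotheses in hand, \cref{prop:poly-familial} applies and yields that $\mathcal{P}(Q)=\FFam$ is familial.

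The point worth emphasizing is that essentially no new work is required: the substantive content — the $2$-dimensional powerfulness of discrete opfibrations of virtual double categories (\cref{prop:disc-opfib-powerful}) and the general statement that $\mathcal{P}(p)$ is familial whenever $p$ is powerful and a representable discrete opfibration (\cref{prop:poly-familial}) — has already been established. The only genuinely new input relative to \cref{prop:MMat} is that here the relevant universal discrete opfibration is the ``larger'' one $Q\colon\SSpan_\ast\to\SSpan$ rather than $P_\hc\colon(\Set_\ast)_\hc\to\Set_\hc$, and the one thing to double-check is that $\SSpan(-)$ really does transport the discrete opfibration property of $\Set_\ast\to\Set$ in the strong sense of \cref{def:disc-opfib} (not merely the representable sense); this is a routine verification at the level of the underlying $T$-graphs using the pullback characterization in \cref{rmk:dof-for-T-cats}, and I do not expect any real obstacle.
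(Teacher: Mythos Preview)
Your proposal is correct and follows essentially the same approach as the paper: invoke \cref{prop:poly-familial} after noting that $\FFam=\mathcal{P}(Q)$, that $Q$ is a discrete opfibration in the sense of \cref{def:disc-opfib} (hence powerful by \cref{prop:disc-opfib-powerful} and a representable discrete opfibration by \cref{dof-between-vdbl-is-internal-dof}). The paper's proof is a one-line citation of \cref{prop:poly-familial,dof-between-vdbl-is-internal-dof}; you spell out the same chain with the additional (implicit in the paper) remark that $\VDbl$ has finite limits.
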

\begin{proof}
    This follows from \cref{prop:poly-familial,dof-between-vdbl-is-internal-dof}.
\end{proof}

The left $2$-adjoint of the $2$-functor $\FFam_1\colon \VDbl\to \VDbl/\SSpan$ (which is the composite of the first two factors of \cref{eqn:FFam-composite}) is 
called $\EElts\colon \VDbl/\SSpan\to \VDbl$. It is 
the composite 
\begin{equation*}
\begin{tikzpicture}[baseline=-\the\dimexpr\fontdimen22\textfont2\relax ]
      \node(0) at (0,0)  {$\VDbl/\SSpan$};
      \node(1) at (4,0)  {$\VDbl/\SSpan_\ast$};
      \node(2) at (8,0)  {$\VDbl$.};
      \draw [->] (0) to node[auto,labelsize] {$Q^\ast$}  (1);
      \draw [->] (1) to node[auto,labelsize] {forgetful}  (2);
\end{tikzpicture}
\end{equation*}
Hence $\EElts$ maps each $(F\colon \vdcat{A}\to \SSpan)\in \VDbl/\SSpan$ to $\EElts(F)\in\VDbl$ defined by the pullback 
\begin{equation*}
\begin{tikzpicture}[baseline=-\the\dimexpr\fontdimen22\textfont2\relax ]
      \node(00) at (0,0.75) {$\EElts(F)$};
      \node(01) at (2,0.75) {$\SSpan_\ast$};
      \node(10) at (0,-0.75) {$\vdcat{A}$};
      \node(11) at (2,-0.75) {$\SSpan$}; 
      \draw [->] (00) to node[auto, labelsize] {} (01);  
      \draw [->] (10) to node[auto, swap, labelsize] {$F$} (11); 
      \draw [->] (00) to node[auto, swap, labelsize] {$P_F$} (10); 
      \draw [->] (01) to node[auto,labelsize] {$Q$} (11);
      \draw (0.2,0.25) to (0.5,0.25) to (0.5,0.55);
\end{tikzpicture}
\end{equation*}
in $\VDbl$. It is easy to describe $\EElts(F)$ explicitly, since pullbacks in $\VDbl$ are straightforward.
\begin{itemize}
    \item The vertical category of $\EElts(F)$ is the category of elements $\Elts(F_\vc)$ of the functor $F_\vc\colon \vdcat{A}_\vc\to \SSpan_\vc=\Set$. Hence an object in $\EElts(F)$ is a pair $(A,a)$ of an object $A$ in $\vdcat{A}$ and an element $a\in FA$, and a vertical morphism $(A,a)\to (A',a')$ in $\EElts(F)$ is a vertical morphism $f\colon A\to A'$ in $\vdcat{A}$ such that $(Ff)(a)=a'$. 
    \item A horizontal morphism in $\EElts(F)$ from $(A,a)$ to $(A',a')$ is a pair $(\Gamma,\gamma)$ consisting of a horizontal morphism $\Gamma\colon A\pto A'$ in $\vdcat{A}$ and an element $\gamma\in F\Gamma$, such that 
\begin{equation}\label{eqn:horiz-in-EElts}
    \gamma_-=a\qquad\text{and}\qquad \gamma_+=a'
\end{equation}
hold. (Recall that $F$ maps $\Gamma$ to the span 
   \[\begin{tikzpicture}[baseline=-\the\dimexpr\fontdimen22\textfont2\relax ]
      \node(0) at (0,-0.75) {$FA$};
      \node(10) at (1.5,0.75) {$F\Gamma$};
      \node(1) at (3,-0.75) {$FA'$}; 
      \draw [->] (10) to node[auto, swap, labelsize] {$(-)_-$} (0); 
      \draw [->] (10) to node[auto, labelsize] {$(-)_+$} (1); 
\end{tikzpicture}\]
of sets.)
    \item A multicell in $\EElts(F)$ of the type
\begin{equation*}
\begin{tikzpicture}[baseline=-\the\dimexpr\fontdimen22\textfont2\relax ]
      \node(00) at (0,0.75) {$(A_0,a_0)$};
      \node(01) at (2.5,0.75) {$(A_1,a_1)$};
      \node(02) at (5,0.75) {$\cdots$};
      \node(03) at (7.5,0.75) {$(A_n,a_n)$};
      \node(10) at (0,-0.75) {$(A,a)$};
      \node(11) at (7.5,-0.75) {$(A',a')$}; 
      \draw [pto] (00) to node[auto, labelsize] {$(\Gamma_1,\gamma_1)$} (01);  
      \draw [pto] (01) to node[auto, labelsize] {$(\Gamma_2,\gamma_2)$} (02);  
      \draw [pto] (02) to node[auto, labelsize] {$(\Gamma_n,\gamma_n)$} (03);  
      \draw [pto] (10) to node[auto, swap, labelsize] {$(\Gamma,\gamma)$} (11); 
      \draw [->] (00) to node[auto, swap, labelsize] {$f$} (10); 
      \draw [->] (03) to node[auto,labelsize] {$f'$} (11);
      \node at (3.75,0) {$\alpha$}; 
\end{tikzpicture}
\end{equation*}
is a multicell 
\begin{equation*}
\begin{tikzpicture}[baseline=-\the\dimexpr\fontdimen22\textfont2\relax ]
      \node(00) at (0,0.75) {$A_0$};
      \node(01) at (1.5,0.75) {$A_1$};
      \node(02) at (3,0.75) {$\cdots$};
      \node(03) at (4.5,0.75) {$A_n$};
      \node(10) at (0,-0.75) {$A$};
      \node(11) at (4.5,-0.75) {$A'$}; 
      \draw [pto] (00) to node[auto, labelsize] {$\Gamma_1$} (01);  
      \draw [pto] (01) to node[auto, labelsize] {$\Gamma_2$} (02);  
      \draw [pto] (02) to node[auto, labelsize] {$\Gamma_n$} (03);  
      \draw [pto] (10) to node[auto, swap, labelsize] {$\Gamma$} (11); 
      \draw [->] (00) to node[auto, swap, labelsize] {$f$} (10); 
      \draw [->] (03) to node[auto,labelsize] {$f'$} (11);
      \node at (2.25,0) {$\alpha$}; 
\end{tikzpicture}
\end{equation*}
in $\vdcat{A}$ satisfying $(F\alpha)(\gamma_1,\gamma_2,\dots,\gamma_n)=\gamma$. 
(Notice that we have $(\gamma_1,\gamma_2,\dots,\gamma_n)\in F\Gamma_1\times_{FA_1}F\Gamma_2\times_{FA_2}\dots\times_{FA_{n-1}}F\Gamma_n$ thanks to \eqref{eqn:horiz-in-EElts}. Again, some modification is required when $n=0$.) 
\end{itemize}
The virtual double functor $P_F\colon \EElts(F)\to \vdcat{A}$ is given by the projection, and is a discrete opfibration (in the sense of \cref{def:disc-opfib}) because it is a pullback of a discrete opfibration $Q$.

Recall that the forgetful functor $\Set_\ast\to \Set$ is a universal discrete opfibration with small fibers in $\CAT$, in the sense that any discrete opfibration with small fibers in $\CAT$ can be obtained as a pullback of it.
We now show that $Q\colon \SSpan_\ast\to \SSpan$ is a universal discrete opfibration with small fibers in $\VDbl$. 
\begin{definition}
    A discrete opfibration $P\colon \vdcat{X}\to \vdcat{Y}$ between virtual double categories is said to have \emph{small fibres} if the following conditions are satisfied.
    \begin{itemize}
        \item For each object $Y\in\vdcat{Y}$, the set $P^{-1}(Y)$ of all objects $X\in \vdcat{Y}$ with $PX=Y$ is a small set. 
        \item For each horizontal morphism $\Psi$ in $\vdcat{Y}$, the set $P^{-1}(\Psi)$ of all horizontal morphisms $\Phi$ in $\vdcat{X}$ with $P\Phi=\Psi$ is a small set.
    \end{itemize}
\end{definition}

\begin{definition}\label{def:DP}
    Let $P\colon \vdcat{X}\to \vdcat{Y}$ be a discrete opfibration between virtual double categories having small fibres. Define a virtual double functor $D_P\colon \vdcat{Y}\to \SSpan$ as follows. 
    \begin{itemize}
        \item An object $Y$ of $\vdcat{Y}$ is mapped by $D_P$ to the set $P^{-1}(Y)$. 
        \item A vertical morphism $v\colon Y\to Y'$ of $\vdcat{Y}$ is mapped by $D_P$ to the function $v_\ast\colon P^{-1}(Y)\to P^{-1}(Y')$.
        \item A horizontal morphism $\Psi\colon Y\pto Y'$ of $\vdcat{Y}$ is mapped by $D_P$ to the span 
\[\begin{tikzpicture}[baseline=-\the\dimexpr\fontdimen22\textfont2\relax ]
      \node(0) at (0,-0.75) {$P^{-1}(Y)$};
      \node(10) at (1.5,0.75) {$P^{-1}(\Psi)$};
      \node(1) at (3,-0.75) {$P^{-1}(Y')$}; 
      \draw [->] (10) to node[auto, swap, labelsize] {$(-)_-$} (0); 
      \draw [->] (10) to node[auto, labelsize] {$(-)_+$} (1); 
\end{tikzpicture}\]
        of sets, where for each $(\Phi\colon X\pto X')\in P^{-1}(\Psi)$, we set $\Phi_-=X$ and $\Phi_+=X'$.
        \item A multicell 
        \begin{equation*}
\begin{tikzpicture}[baseline=-\the\dimexpr\fontdimen22\textfont2\relax ]
      \node(00) at (0,0.75) {$Y_0$};
      \node(01) at (1.5,0.75) {$Y_1$};
      \node(02) at (3,0.75) {$\cdots$};
      \node(03) at (4.5,0.75) {$Y_n$};
      \node(10) at (0,-0.75) {$Y$};
      \node(11) at (4.5,-0.75) {$Y'$}; 
      \draw [pto] (00) to node[auto, labelsize] {$\Psi_1$} (01);  
      \draw [pto] (01) to node[auto, labelsize] {$\Psi_2$} (02);  
      \draw [pto] (02) to node[auto, labelsize] {$\Psi_n$} (03);  
      \draw [pto] (10) to node[auto, swap, labelsize] {$\Psi$} (11); 
      \draw [->] (00) to node[auto, swap, labelsize] {$v$} (10); 
      \draw [->] (03) to node[auto,labelsize] {$v'$} (11);
      \node at (2.25,0) {$\nu$}; 
\end{tikzpicture}
\end{equation*}
in $\vdcat{Y}$ is mapped by $D_P$ to the function $\nu_\ast\colon P^{-1}(\Psi_1)\times_{P^{-1}(Y_1)}P^{-1}(\Psi_2)\times_{P^{-1}(Y_2)}\dots\times_{P^{-1}(Y_{n-1})}P^{-1}(\Psi_n)\to P^{-1}(\Psi)$.
    \end{itemize}
      It is straightforward to see that $D_P$ is indeed a virtual double functor.
\end{definition}

It is not hard to see that for any discrete opfibration $P\colon \vdcat{X}\to \vdcat{Y}$ with small fibers, we have 
a canonical isomorphism $(P_{D_P}\colon \EElts(D_P)\to \vdcat{Y})\cong (P\colon \vdcat{X}\to\vdcat{Y})$ in $\VDbl/\vdcat{Y}$. 
In other words, $P$ can be written as the following pullback of $Q$:
\begin{equation}\label{eqn:P-via-pullback}
\begin{tikzpicture}[baseline=-\the\dimexpr\fontdimen22\textfont2\relax ]
      \node(00) at (0,0.75) {$\vdcat{X}$};
      \node(01) at (2,0.75) {$\SSpan_\ast$};
      \node(10) at (0,-0.75) {$\vdcat{Y}$};
      \node(11) at (2,-0.75) {$\SSpan$.}; 
      \draw [->] (00) to node[auto, labelsize] {} (01);  
      \draw [->] (10) to node[auto, swap, labelsize] {$D_P$} (11); 
      \draw [->] (00) to node[auto, swap, labelsize] {$P$} (10); 
      \draw [->] (01) to node[auto,labelsize] {$Q$} (11);
      \draw (0.2,0.25) to (0.5,0.25) to (0.5,0.55);
\end{tikzpicture}
\end{equation}

We can use this fact to relate $\EEnr$ and $\FFam$. 
Consider the following diagram realizing $P_\hc\colon (\Set_\ast)_\hc\to \Set_\hc$ as a pullback of $Q\colon \SSpan_\ast\to \SSpan$: 
\begin{equation}\label{eqn:Phc-as-pb-of-Q}
\begin{tikzpicture}[baseline=-\the\dimexpr\fontdimen22\textfont2\relax ]
      \node(00) at (0,0.75) {$(\Set_\ast)_\hc$};
      \node(01) at (2,0.75) {$\SSpan_\ast$};
      \node(10) at (0,-0.75) {$\Set_\hc$};
      \node(11) at (2,-0.75) {$\SSpan$.}; 
      \draw [->] (00) to node[auto, labelsize] {} (01);  
      \draw [->] (10) to node[auto, swap, labelsize] {$J$} (11); 
      \draw [->] (00) to node[auto, swap, labelsize] {$P_\hc$} (10); 
      \draw [->] (01) to node[auto,labelsize] {$Q$} (11);
      \draw (0.2,0.25) to (0.5,0.25) to (0.5,0.55);
\end{tikzpicture}
\end{equation}
The virtual double functor $J$ is the identity on vertical categories, and maps the (unique) horizontal morphism in $\Set_\hc$ from $I$ to $I'$ to the span 
\[\begin{tikzpicture}[baseline=-\the\dimexpr\fontdimen22\textfont2\relax ]
      \node(0) at (0,-0.75) {$I$};
      \node(10) at (1.5,0.75) {$I\times I'$};
      \node(1) at (3,-0.75) {$I'$.}; 
      \draw [->] (10) to node[auto, swap, labelsize] {$\pi_1$} (0); 
      \draw [->] (10) to node[auto, labelsize] {$\pi_2$} (1); 
\end{tikzpicture}\]

\begin{proposition}\label{prop:AMat-pbk}
    For each $\vdcat{A}\in \VDbl$, we have a pullback
\[
\begin{tikzpicture}[baseline=-\the\dimexpr\fontdimen22\textfont2\relax ]
      \node(00) at (0,0.75) {$\enMMat{\vdcat{A}}$};
      \node(01) at (2,0.75) {$\FFam(\vdcat{A})$};
      \node(10) at (0,-0.75) {$\Set_\hc$};
      \node(11) at (2,-0.75) {$\SSpan$}; 
      \draw [->] (00) to node[auto, labelsize] {} (01);  
      \draw [->] (10) to node[auto, swap, labelsize] {$J$} (11); 
      \draw [->] (00) to node[auto, swap, labelsize] {$\enMMat{!_\vdcat{A}}$} (10); 
      \draw [->] (01) to node[auto,labelsize] {$\FFam(!_\vdcat{A})$} (11);
      \draw (0.2,0.25) to (0.5,0.25) to (0.5,0.55);
\end{tikzpicture}
\]
    in $\VDbl$.
\end{proposition}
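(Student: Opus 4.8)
The plan is to realize $\enMMat{\vdcat A}$ directly as the stated pullback, using that pullbacks in $\VDbl$ are formed \emph{componentwise} --- separately on vertical categories, on horizontal morphisms, and on multicells (the observation already used for $\EElts$ above) --- together with the explicit models of $\enMMat{\vdcat A}$ and $\FFam(\vdcat A)$ from \cref{sec:MMat,sec:FFam}.

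First I would record the three maps of the square on each level. By \cref{eqn:Phc-as-pb-of-Q}, $J\colon \Set_\hc\to\SSpan$ is the identity on vertical categories, sends the unique horizontal morphism $I\pto I'$ of $\Set_\hc$ to the product span $I\times I'$ with its two projections, and sends the unique multicell over a given boundary to the induced multimap of such product spans, which at arity $n$ is the canonical map $(I_0\times I_1)\times_{I_1}\cdots\times_{I_{n-1}}(I_{n-1}\times I_n)\cong I_0\times\cdots\times I_n\to I\times I'$ given by $(i_0,\dots,i_n)\mapsto(v(i_0),v'(i_n))$. The projections $\enMMat{!_\vdcat A}$ and $\FFam(!_\vdcat A)$ forget all $\vdcat A$-data: on objects $(I,\vect A)\mapsto I$; on a matrix $\vect\Gamma$ the first returns the unique horizontal morphism $I\pto I'$ while the second returns the underlying span of $(S,\vect\Gamma)$; and analogously on multicells.

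Then I would compute $\Set_\hc\times_\SSpan\FFam(\vdcat A)$ level by level and match it with $\enMMat{\vdcat A}$. On objects and vertical morphisms both are $\Fam(\vdcat A_\vc)$. A horizontal morphism of the pullback from $(I,\vect A)$ to $(I',\vect A')$ is a horizontal morphism $(S,\vect\Gamma)$ of $\FFam(\vdcat A)$ whose underlying span is forced by $J$ to be $I\times I'$ with the two projections; for this span, $\vect\Gamma=(\Gamma_s)_{s\in S}$ is exactly an $(I\times I')$-indexed matrix $(\Gamma_{i,i'}\colon A_i\pto A'_{i'})$, i.e.\ a horizontal morphism of $\enMMat{\vdcat A}$. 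Similarly, a multicell of the pullback is a multicell of $\FFam(\vdcat A)$ whose multimap of spans is forced to be the canonical comparison above; the accompanying family of $\vdcat A$-multicells, now indexed by $I_0\times\cdots\times I_n$ and with lower boundary the matrix entry $\Gamma_{v(i_0),v'(i_n)}$, is precisely the data of a multicell of $\enMMat{\vdcat A}$. These bijections are visibly compatible with composition, identities, and the two projections, so the evident comparison virtual double functor $\enMMat{\vdcat A}\to\FFam(\vdcat A)$ (re-index a matrix along $I\times I'$) exhibits the square as a pullback, naturally in $\vdcat A$.

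The argument is all bookkeeping; the only points needing a moment's care are the low arities $n=0$ and $n=1$, where one must check that forcing the span-multimap to factor through $J$ still leaves exactly an $\enMMat{\vdcat A}$-multicell of the correct type (for $n=0$ one uses the identification of $S_1\times_{I_1}\cdots\times_{I_{n-1}}S_n$ with $I_0$ present on both sides). I would also point out the conceptual reason behind the statement: $\MMat=\mathcal P(P_\hc)$ and $\FFam=\mathcal P(Q)$ are polynomial $2$-functors, \cref{eqn:Phc-as-pb-of-Q} presents $P_\hc$ as a pullback of $Q$, and the claim is then an instance of the Beck--Chevalley isomorphism $J^\ast\prod_Q\cong\prod_{P_\hc}W^\ast$ (with $W\colon(\Set_\ast)_\hc\to\SSpan_\ast$ the top edge of \cref{eqn:Phc-as-pb-of-Q}) combined with the canonical iso $W^\ast(\SSpan_\ast\times\vdcat A)\cong(\Set_\ast)_\hc\times\vdcat A$, as available from \cite{Weber-polynomials}; but the componentwise check is shorter here, given the explicit models already in hand.
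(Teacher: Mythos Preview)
Your proposal is correct. The paper, however, takes exactly the route you relegate to a closing remark: it observes that the pullback square \eqref{eqn:Phc-as-pb-of-Q} makes the square of left adjoints
\[
\begin{tikzcd}
\VDbl/(\Set_\ast)_\hc \rar["\sum_W"] & \VDbl/\SSpan_\ast \\
\VDbl/\Set_\hc \uar["P_\hc^\ast"] \rar["\sum_J"'] & \VDbl/\SSpan \uar["Q^\ast"']
\end{tikzcd}
\]
commute (by the pasting law for pullbacks), and then passes to right adjoints to obtain $J^\ast\prod_Q\cong\prod_{P_\hc}W^\ast$; evaluating at $\SSpan_\ast\times\vdcat{A}$ gives the pullback. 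That is the entire proof in the paper---three lines, no componentwise unpacking.

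Your primary argument instead computes the strict pullback in $\VDbl$ level by level and matches it against the explicit model of $\enMMat{\vdcat A}$ from \cref{sec:MMat}. This is perfectly valid and has the virtue of being self-contained: a reader need not trust (or look up) the Beck--Chevalley machinery, and sees concretely why ``span forced to be $I\times I'$'' is the same datum as ``matrix''. The cost is some bookkeeping, particularly at the multicell level where the indexing set $(I_0\times I_1)\times_{I_1}\cdots\times_{I_{n-1}}(I_{n-1}\times I_n)$ must be identified with $I_0\times\cdots\times I_n$. The paper's argument trades that bookkeeping for abstraction: once $\MMat$ and $\FFam$ are set up as polynomial $2$-functors, the pullback is a formal consequence of how $\prod$ interacts with base change, and no explicit model is touched. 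Since you already sketch this at the end, you might simply swap the emphasis and present the Beck--Chevalley argument as the proof, keeping one sentence of the explicit match as a sanity check.
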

\begin{proof}
    The pullback \eqref{eqn:Phc-as-pb-of-Q} readily implies the commutativity of the following (Beck--Chevalley) square 
    \begin{equation*}
\begin{tikzpicture}[baseline=-\the\dimexpr\fontdimen22\textfont2\relax ]
      \node(00) at (0,0.75) {$\VDbl/(\Set_\ast)_\hc$};
      \node(01) at (3.5,0.75) {$\VDbl/\SSpan_\ast$};
      \node(10) at (0,-0.75) {$\VDbl/\Set_\hc$};
      \node(11) at (3.5,-0.75) {$\VDbl/\SSpan$}; 
      \draw [->] (00) to node[auto, labelsize] {} (01);  
      \draw [->] (10) to node[auto, swap, labelsize] {$\sum_J$} (11); 
      \draw [<-] (00) to node[auto, swap, labelsize] {$P_\hc^\ast$} (10); 
      \draw [<-] (01) to node[auto,labelsize] {$Q^\ast$} (11);
\end{tikzpicture}
\end{equation*}
    up to an isomorphism. The claim follows by considering the commutative square obtained by taking the right $2$-adjoint of each of the above $2$-functors. 
\end{proof}

\begin{theorem}
    Let $\vdcat{A}\in \VDbl$.
    \begin{enumerate}
        \item We have a pullback 
        \[
\begin{tikzpicture}[baseline=-\the\dimexpr\fontdimen22\textfont2\relax ]
      \node(00) at (0,0.75) {$\PProf{\vdcat{A}}$};
      \node(01) at (3,0.75) {$\MMod\bigl(\FFam(\vdcat{A})\bigr)$};
      \node(10) at (0,-0.75) {$\Set_\hc$};
      \node(11) at (3,-0.75) {$\intPProf(\Set)$}; 
      \draw [->] (00) to node[auto, labelsize] {} (01);  
      \draw [->] (10) to node[auto, swap, labelsize] {} (11); 
      \draw [->] (00) to node[auto, swap, labelsize] {} (10); 
      \draw [->] (01) to node[auto,labelsize] {} (11);
      \draw (0.2,0.25) to (0.5,0.25) to (0.5,0.55);
\end{tikzpicture}
\]
        in $\VDbln$, where $\intPProf(\Set)$ is the unital virtual double category of \emph{internal} categories in $\Set$ described in e.g.\ \cite[Example~2.10]{Cruttwell-Shulman-unified}.
        \item We have a pullback 
\[
\begin{tikzpicture}[baseline=-\the\dimexpr\fontdimen22\textfont2\relax ]
      \node(00) at (0,0.75) {$\enCat{\vdcat{A}}$};
      \node(01) at (3,0.75) {$\V\Bigl(\MMod\bigl(\FFam(\vdcat{A})\bigr)\Bigr)$};
      \node(10) at (0,-0.75) {$\Setlc$};
      \node(11) at (3,-0.75) {$\Cat$}; 
      \draw [->] (00) to node[auto, labelsize] {} (01);  
      \draw [->] (10) to node[auto, swap, labelsize] {} (11); 
      \draw [->] (00) to node[auto, swap, labelsize] {} (10); 
      \draw [->] (01) to node[auto,labelsize] {} (11);
      \draw (0.2,0.25) to (0.5,0.25) to (0.5,0.55);
\end{tikzpicture}
\]
    in $\twoCAT$, where the (fully faithful) $2$-functor $\Setlc\to \Cat$ identifies $\Setlc$ with the full sub-$2$-category of $\Cat$ consisting of chaotic categories.
    \end{enumerate}
\end{theorem}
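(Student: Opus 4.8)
The plan is to deduce both parts from \cref{prop:AMat-pbk} by transporting its pullback square along limit‑preserving right $2$‑adjoints; since right $2$‑adjoints preserve all $2$‑limits, and all the pullbacks in play are the strict $2$‑pullbacks used elsewhere in the paper, this reduces the statement to identifying the four corners in each case.

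For part~(1), I would apply the $2$‑functor $\MMod\colon\VDbl\to\VDbln$, which is a right $2$‑adjoint by the $2$‑adjunction \eqref{eqn:U-Mod-adjunction}. Applying it to the pullback of \cref{prop:AMat-pbk} yields a pullback square in $\VDbln$ with corners $\MMod(\enMMat{\vdcat{A}})$, $\MMod(\FFam(\vdcat{A}))$, $\MMod(\Set_\hc)$ and $\MMod(\SSpan)$. Here $\MMod(\enMMat{\vdcat{A}})=\EEnr(\vdcat{A})=\PProf{\vdcat{A}}$ and $\MMod(\SSpan)=\MMod(\SSpan(\Set))=\intPProf(\Set)$ both hold by definition. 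The remaining identification, $\MMod(\Set_\hc)\cong\Set_\hc$ in $\VDbln$, I would argue as follows: since $\Set_\hc$ is horizontally chaotic, a horizontal monad in it is determined by its underlying object, any two parallel monad bimodules coincide, and any two multicells with a common boundary coincide; hence $\MMod(\Set_\hc)$ is again horizontally chaotic, its vertical category being the category $\Set$ of monads and monad morphisms in $\Set_\hc$. As a horizontally chaotic virtual double category admits a unique horizontal unit at each object, this isomorphism lifts to $\VDbln$. This gives part~(1).

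For part~(2), I would apply $\V\colon\VDbln\to\twoCAT$, a right $2$‑adjoint by the $2$‑adjunction $\VV\dashv\V$ of \cref{subsec:VV-V}, to the pullback square of part~(1), obtaining a pullback in $\twoCAT$ with corners $\V(\PProf{\vdcat{A}})$, $\V(\MMod(\FFam(\vdcat{A})))$, $\V(\Set_\hc)$ and $\V(\intPProf(\Set))$. Then: $\V(\PProf{\vdcat{A}})=\V(\EEnr(\vdcat{A}))\cong\Enr(\vdcat{A})=\enCat{\vdcat{A}}$, since $\V\circ\EEnr\cong\Enr$ (\cref{subsec:VV-V}); $\V(\Set_\hc)\cong\Setlc$, since the horizontal chaoticity of $\Set_\hc$ makes every hom‑category of $\V(\Set_\hc)$ chaotic, with underlying category the vertical category $\Set$; and $\V(\intPProf(\Set))=\V(\MMod(\SSpan(\Set)))\cong\Cat$, because the horizontal monads in $\SSpan(\Set)$ are exactly the small categories, the monad morphisms are the functors, and the cells of the form \eqref{eqn:theta-in-VA} in $\MMod(\SSpan(\Set))$ are the natural transformations. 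It then remains to check that the bottom $2$‑functor $\Setlc\to\Cat$ thus produced sends a set $I$ to the chaotic category on $I$, which is the asserted fully faithful inclusion.

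The adjunction‑preservation and diagram‑chasing steps are routine once \cref{prop:AMat-pbk} is available. The main piece of bookkeeping, and the only real obstacle, is the identification $\V(\MMod(\SSpan(\Set)))\cong\Cat$: confirming that the monad‑bimodule cells defining the $2$‑cells of $\V$ of a $\MMod$ recover internal natural transformations requires a careful, if standard, unwinding of definitions, in the spirit of \cite[Example~6.4]{Cruttwell-Shulman-unified} together with the identification $\intPProf(\Set)=\MMod(\SSpan(\Set))$ from \cite[Example~2.10]{Cruttwell-Shulman-unified}.
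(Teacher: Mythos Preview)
Your proposal is correct and follows essentially the same approach as the paper: both parts are obtained by applying the right $2$-adjoints $\MMod$ and $\V\circ\MMod$, respectively, to the pullback of \cref{prop:AMat-pbk}. The paper's proof is terse and leaves the corner identifications (such as $\MMod(\Set_\hc)\cong\Set_\hc$ and $\V(\intPProf(\Set))\cong\Cat$) implicit, whereas you spell them out; this extra detail is sound and does not deviate from the paper's strategy.
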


\begin{proof}
    The first pullback is obtained by applying the right 2-adjoint $\MMod\colon\VDbl\to\VDbln$ to the pullback in \cref{prop:AMat-pbk}, while the second is obtained by applying the right 2-adjoint $\V\circ\MMod\colon\VDbl\to\twoCAT$ to the same pullback.
\end{proof}

Using $\FFam$, we can also give an alternative description of $\prod_P\colon \VDbl/\vdcat{X}\to \VDbl/\vdcat{Y}$ (constructed in the proof of \cref{prop:disc-opfib-powerful}) for a discrete opfibration $P\colon \vdcat{X}\to \vdcat{Y}$ between virtual double categories,
at least when $P$ has small fibers. 
Let $D_P\colon \vdcat{Y}\to \SSpan$ be the virtual double functor defined in \cref{def:DP}. Then we have $\vdcat{X}\cong \EElts(D_P)$, and hence the unit of the $2$-adjunction $\EElts\dashv \FFam_1$ at $D_P$ is a virtual double functor $\overline{P}\colon \vdcat{Y}\to \FFam(\vdcat{X})$ making the diagram 
\begin{equation*}
\begin{tikzpicture}[baseline=-\the\dimexpr\fontdimen22\textfont2\relax ]
      \node(0) at (0,0.75) {$\vdcat{Y}$};
      \node(1) at (3,0.75) {$\FFam(\vdcat{X})$};
      \node(2) at (1.5,-0.75) {$\SSpan$};
      \draw [->] (0) to node[auto, labelsize] {$\overline P$}  (1);  
      \draw [<-] (2) to node[auto, swap, labelsize] {$\FFam(!_\vdcat{X})$}  (1); 
      \draw [->] (0) to node[auto, swap, labelsize] {$D_P$} (2); 
\end{tikzpicture}
\end{equation*}
commute. 
\begin{proposition}
    In this situation, for any $(F\colon \vdcat{A}\to \vdcat{X})\in \VDbl/\vdcat{X}$, we have a pullback 
        \[
\begin{tikzpicture}[baseline=-\the\dimexpr\fontdimen22\textfont2\relax ]
      \node(00) at (0,0.75) {$\prod_P\vdcat{A}$};
      \node(01) at (2,0.75) {$\FFam(\vdcat{A})$};
      \node(10) at (0,-0.75) {$\vdcat{Y}$};
      \node(11) at (2,-0.75) {$\FFam(\vdcat{X})$}; 
      \draw [->] (00) to node[auto, labelsize] {} (01);  
      \draw [->] (10) to node[auto, swap, labelsize] {$\overline P$} (11); 
      \draw [->] (00) to node[auto, swap, labelsize] {$\prod_PF$} (10); 
      \draw [->] (01) to node[auto,labelsize] {$\FFam(F)$} (11);
      \draw (0.2,0.25) to (0.5,0.25) to (0.5,0.55);
\end{tikzpicture}
\]  
    in $\VDbl$.
\end{proposition}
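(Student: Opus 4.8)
Rather than re-examining explicit descriptions, I would deduce the square from the two adjunctions $P^\ast\dashv\prod_P$ (\cref{prop:disc-opfib-powerful}) and $\EElts\dashv\FFam_1$ by a representability argument. Write $\overline P^\ast\FFam(\vdcat{A})$ for the pullback of $\FFam(F)\colon\FFam(\vdcat{A})\to\FFam(\vdcat{X})$ along $\overline P$, regarded as an object of $\VDbl/\vdcat{Y}$ via the remaining projection; it suffices to produce an isomorphism $\prod_P(\vdcat{A},F)\cong\overline P^\ast\FFam(\vdcat{A})$ in $\VDbl/\vdcat{Y}$, since transporting the pullback projections across such an isomorphism exhibits $\prod_P(\vdcat{A},F)$ as the asserted pullback. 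By the $2$-dimensional Yoneda lemma, this reduces to constructing, for every $H\colon\vdcat{Z}\to\vdcat{Y}$ in $\VDbl$, an isomorphism of categories
\[
(\VDbl/\vdcat{Y})\bigl((\vdcat{Z},H),\overline P^\ast\FFam(\vdcat{A})\bigr)\;\cong\;(\VDbl/\vdcat{Y})\bigl((\vdcat{Z},H),\prod_P(\vdcat{A},F)\bigr)
\]
which is $2$-natural in $(\vdcat{Z},H)$.

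\textbf{Key steps.} First unpack the left-hand side: by the universal property of the pullback, a morphism $(\vdcat{Z},H)\to\overline P^\ast\FFam(\vdcat{A})$ over $\vdcat{Y}$ is a virtual double functor $K\colon\vdcat{Z}\to\FFam(\vdcat{A})$ with $\FFam(F)\circ K=\overline P\circ H$, with $2$-cells computed likewise. Composing this equation with $\FFam(!_{\vdcat{X}})$ and using $!_{\vdcat{A}}=!_{\vdcat{X}}\circ F$ together with $\FFam(!_{\vdcat{X}})\circ\overline P=D_P$ forces $\FFam(!_{\vdcat{A}})\circ K=D_P\circ H$, so $K$ lifts canonically to a morphism $\widetilde K\colon(\vdcat{Z},D_P\circ H)\to\FFam_1(\vdcat{A})$ of $\VDbl/\SSpan$. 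Transposing along $\EElts\dashv\FFam_1$ turns $\widetilde K$ into a virtual double functor $k\colon\EElts(\vdcat{Z},D_P\circ H)\to\vdcat{A}$; and pullback-pasting, using that $P$ is the pullback of $Q$ along $D_P$ (equation~\eqref{eqn:P-via-pullback}, i.e.\ $\vdcat{X}\cong\EElts(D_P)$ over $\vdcat{Y}$), identifies $\EElts(\vdcat{Z},D_P\circ H)$ with $\vdcat{Z}\times_{\vdcat{Y}}\vdcat{X}=P^\ast(\vdcat{Z},H)$. It then remains to see that $\FFam(F)\circ K=\overline P\circ H$ corresponds under this transposition to the requirement that $k$ be a morphism over $\vdcat{X}$: the left side transposes to $F\circ k$ by naturality of the adjunction isomorphism in its codomain variable, while the right side (whose lift to $\VDbl/\SSpan$ is the unit component $\overline P=\eta_{D_P}$ precomposed with $H$) transposes, by the triangle identity for $\EElts\dashv\FFam_1$, to the projection $P^\ast(\vdcat{Z},H)\to\vdcat{X}$. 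Hence the left-hand hom-category is isomorphic to $(\VDbl/\vdcat{X})\bigl(P^\ast(\vdcat{Z},H),(\vdcat{A},F)\bigr)$, which by $P^\ast\dashv\prod_P$ is the right-hand hom-category; all isomorphisms in the chain are $2$-natural in $(\vdcat{Z},H)$, and chasing identities shows the resulting isomorphism lies over $\vdcat{Y}$.

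\textbf{Main obstacle.} The delicate part is the last step: transporting the two commutativity conditions through the adjunction transposes and verifying, via the triangle identity and the identification $\EElts(D_P)\cong\vdcat{X}$, that $\overline P\circ H$ transposes precisely to the second projection $\vdcat{Z}\times_{\vdcat{Y}}\vdcat{X}\to\vdcat{X}$, with every structure map lining up so that the final isomorphism genuinely lives in $\VDbl/\vdcat{Y}$. A reader preferring a concrete argument can instead check the square by hand, using the explicit description of $\prod_P$ from the proof of \cref{prop:disc-opfib-powerful}, the explicit description of $\FFam$ given in this section, and the description of $\overline P$ read off from $\FFam(!_{\vdcat{X}})\circ\overline P=D_P$ and its being the adjunction unit; one then matches both corners term by term on objects, vertical and horizontal morphisms, and multicells, the only delicate points being that the span structure on $\FFam$-horizontal morphisms forces the relevant index sets to be exactly the fibres $P^{-1}(\Psi)$, and the degenerate case $n=0$ for multicells.
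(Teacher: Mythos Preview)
Your proof is correct and uses the same essential ingredients as the paper's: the two adjunctions $P^\ast\dashv\prod_P$ and $\EElts\dashv\FFam_1$, the triangle identity for the latter, and pullback pasting via the identification $\vdcat{X}\cong\EElts(D_P)$. The only difference is organizational: the paper passes globally to left adjoints, reducing the claim to the commutativity of the triangle $\overline\EElts_{\vdcat{X}}\circ\sum_{\overline P}\cong P^\ast$ and then verifying this for a single $(\vdcat{B},G)$ via the same pasting argument, whereas you carry out the equivalent computation representably on hom-categories; unwinding either argument yields the other.
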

    See \cite[Lemma~2.1]{Weber-operad-poly} for an analogous result for categories. 
\begin{proof}
    It suffices to show that the diagram 
\begin{equation}
\label{eqn:prod-p-via-fam}
\begin{tikzpicture}[baseline=-\the\dimexpr\fontdimen22\textfont2\relax ]
      \node(0) at (0,-0.75) {$\VDbl/\vdcat{X}$};
      \node(1) at (1.5,0.75) {$\VDbl/\FFam(\vdcat{X})$};
      \node(2) at (3,-0.75) {$\VDbl/\vdcat{Y}$};
      \draw [->] (0) to node[auto, labelsize] {$\FFam_\vdcat{X}$}  (1);  
      \draw [<-] (2) to node[auto, swap, labelsize] {$\overline P^\ast$}  (1); 
      \draw [->] (0) to node[auto, swap, labelsize] {$\prod_P$} (2); 
\end{tikzpicture}
\end{equation}
    commutes up to isomorphism. This is equivalent to the statement that the diagram
    \begin{equation}\label{eqn:prod-p-via-fam-left}
\begin{tikzpicture}[baseline=-\the\dimexpr\fontdimen22\textfont2\relax ]
      \node(0) at (0,-0.75) {$\VDbl/\vdcat{X}$};
      \node(1) at (1.5,0.75) {$\VDbl/\FFam(\vdcat{X})$};
      \node(2) at (3,-0.75) {$\VDbl/\vdcat{Y}$,};
      \draw [<-] (0) to node[auto, labelsize] {$\overline \EElts_\vdcat{X}$}  (1);  
      \draw [->] (2) to node[auto, swap, labelsize] {$\sum_{\overline P}$}  (1); 
      \draw [<-] (0) to node[auto, swap, labelsize] {$P^\ast$} (2); 
\end{tikzpicture}
    \end{equation}
    obtained by taking the left 2-adjoints of the arrows in \cref{eqn:prod-p-via-fam}, commutes up to isomorphism.
    Here, $\overline{\EElts}_\vdcat{X}$ maps each $\bigr(\vdcat{B}\xrightarrow{H}\FFam(\vdcat{X})\bigr)\in \VDbl/\FFam(\vdcat{X})$ to 
    $\bigl(\EElts(\FFam(!_\vdcat{X}).H)\xrightarrow{\hat H} \vdcat{X}\bigr)\in \VDbl/\vdcat{X}$, where the morphism 
    $\hat H\colon \EElts(\FFam(!_\vdcat{X}).H)\to \vdcat{X}$ in $\VDbl$ is the transpose of the morphism $H\colon \bigl(\vdcat{B}\xrightarrow{H}\FFam(\vdcat{X})\xrightarrow{\FFam(!_\vdcat{X})}\SSpan\bigr)\to \bigl(\FFam(\vdcat{X})\xrightarrow{\FFam(!_\vdcat{X})} \SSpan\bigr)$ in $\VDbl/\SSpan$ with respect to the $2$-adjunction $\EElts\dashv \FFam_1$. 
    \[
\begin{tikzpicture}[baseline=-\the\dimexpr\fontdimen22\textfont2\relax ]
      \node(00) at (0,0.75) {$\EElts\bigl(\FFam(!_\vdcat{X}).H\bigr)$};
      \node(01) at (3.5,0.75) {$\EElts\bigl(\FFam(!_\vdcat{X})\bigr)$};
      \node(03) at (6,0.75) {$\SSpan_\ast$};
      \node(10) at (0,-0.75) {$\vdcat B$};
      \node(11) at (3.5,-0.75) {$\FFam(\vdcat{X})$}; 
      \node(13) at (6,-0.75) {$\SSpan$}; 
      \node(2) at (3.5,2.25) {$\vdcat{X}$}; 
      \draw [->] (00) to node[auto, labelsize] {$\EElts(H)$} (01);  
      \draw [->] (01) to node[auto, labelsize] {} (03);  
      \draw [->] (10) to node[auto, swap, labelsize] {$H$} (11);  
      \draw [->] (11) to node[auto, swap, labelsize] {$\FFam(!_\vdcat{X})$} (13); 
      \draw [->] (01) to node[auto, swap, labelsize] {$\varepsilon_\vdcat{X}$} (2); 
      \draw [->] (00) to node[auto, labelsize] {$\hat H$} (2); 
      \draw [->] (00) to node[auto, swap, labelsize] {} (10); 
      \draw [->] (01) to node[auto,swap,labelsize] {} (11);
      \draw [->] (03) to node[auto,labelsize] {$Q$} (13);
      \draw (0.2,0.25) to (0.5,0.25) to (0.5,0.55);
      \draw (3.7,0.25) to (4,0.25) to (4,0.55);
\end{tikzpicture}
\]  
    
    To show the commutativity of \eqref{eqn:prod-p-via-fam-left}, take any $(\vdcat{B}\xrightarrow{G}\vdcat{Y})\in \VDbl/\vdcat{Y}$. 
    By the triangle identity of the $2$-adjunction $\EElts\dashv\FFam_1$ at $(\vdcat{Y}\xrightarrow{D_P}\SSpan)\in \VDbl/\SSpan$, we see that $\overline\EElts_\vdcat{X}.\sum_{\overline P}$ maps $G$ to $K$ as in the following diagram. 
\[
\begin{tikzpicture}[baseline=-\the\dimexpr\fontdimen22\textfont2\relax ]
      \node(00) at (0,0.75) {$\EElts(D_PG)$};
      \node(01) at (2,0.75) {$\vdcat{X}$};
      \node(03) at (6.5,0.75) {$\SSpan_\ast$};
      \node(10) at (0,-0.75) {$\vdcat B$};
      \node(11) at (2,-0.75) {$\vdcat{Y}$}; 
      \node(12) at (4,-0.75) {$\FFam(\vdcat{X})$};
      \node(13) at (6.5,-0.75) {$\SSpan$}; 
      \draw [->] (00) to node[auto, labelsize] {$K$} (01);  
      \draw [->] (01) to node[auto, labelsize] {} (03);  
      \draw [->] (10) to node[auto, swap, labelsize] {$G$} (11); 
      \draw [->] (11) to node[auto, swap, labelsize] {$\overline P$} (12); 
      \draw [->] (12) to node[auto, swap, labelsize] {$\FFam(!_\vdcat{X})$} (13); 
      \draw [->] (00) to node[auto, swap, labelsize] {} (10); 
      \draw [->] (01) to node[auto,swap,labelsize] {$P$} (11);
      \draw [->] (03) to node[auto,labelsize] {$Q$} (13);
      \draw (0.2,0.25) to (0.5,0.25) to (0.5,0.55);
      \draw (2.2,0.25) to (2.5,0.25) to (2.5,0.55);
\end{tikzpicture}
\]  
    Here, the square on the right is a pullback by \cref{eqn:P-via-pullback}. 
    Hence the commutativity of \eqref{eqn:prod-p-via-fam-left} follows from the pasting lemma for pullbacks.
\end{proof}

\begin{remark}\label{rmk:Fam-for-PsDbl}
     If a virtual double category $\vdcat{A}$ is representable, then $\FFam(\vdcat{A})$ is also representable, and in this case the $\FFam$ construction coincides with the one given in \cite{Pare-Fam,Patterson-products}.
\end{remark}

\begin{remark}\label{rmk:Elt-for-PsDbl}
     Given $F\colon \vdcat{A}\to \SSpan$ in $\VDbl/\SSpan$, if $\vdcat{A}$ is representable, then $\EElts(F)$ is also representable, and in this case the $\EElts$ construction coincides with the one given in \cite[3.7]{Pare-Yoneda}.
\end{remark}

\bibliographystyle{alpha} %
\bibliography{myref} %
\end{document}